%% file: main.tex
\documentclass[11pt,a4paper]{amsart}

\usepackage{geometry}
\usepackage{amsmath}
\usepackage{amsfonts}
\usepackage{amssymb}
\usepackage{amsthm} 
\usepackage{mathtools}

\usepackage{graphicx}
\usepackage{wrapfig}
\usepackage{caption}
\usepackage{subcaption}
\usepackage{tikz-cd} 

\usepackage{booktabs}
\usepackage{hyperref}
\usepackage{enumerate}
\usepackage{xcolor}
\usepackage{url}

\usepackage[foot]{amsaddr}

\graphicspath{{./images/}}

\hypersetup{
	colorlinks=true, 
	linkcolor=black, 
	citecolor=black
}

\DeclareMathOperator{\dist}{dist}

\NewDocumentCommand{\pa}{}{\partial}

\NewDocumentCommand{\Lc}{}{\mathcal{L}}

\NewDocumentCommand{\R}{}{\mathbb{R}}

\NewDocumentCommand{\N}{}{\mathbb{N}}
\NewDocumentCommand{\B}{}{\mathbb{B}}

\NewDocumentCommand{\Sb}{}{\mathbb{S}}

\NewDocumentCommand{\eb}{}{\mathbf{e}}

\theoremstyle{plain}
\newtheorem{thm}{Theorem}[section]
\newtheorem{prop}[thm]{Proposition}
\newtheorem{lem}[thm]{Lemma}
\newtheorem*{lemma*}{Lemma}

\newtheorem{cor}[thm]{Corollary}

\theoremstyle{definition}
\newtheorem{defn}[thm]{Definition}
\newtheorem{rem}[thm]{Remark}
\newtheorem*{rem*}{Remark}
\newtheorem{eg}[thm]{Example}

\numberwithin{equation}{section}

\title{Increase in topological complexity along the mean curvature flow}

\author{Natasha Diederen}
\email{n.diederen24@imperial.ac.uk}

\begin{document}

\begin{abstract}
	We construct an example of a smooth embedding of \(\Sb^{p+q-1}\) in \(\R^{p+q}\) whose evolution under the mean curvature flow forms an isolated singularity at one point, after which it is a smoothly embedded copy of \(\Sb^{p-1}\times \Sb^q\). In doing so, we prove some general results about \(SO(p)\times SO(q)\)-invariant mean curvature flow whose initial data is generated by the rotation of a graphical profile curve, including an analysis of bubblesheet singularities. The most involved part of our analysis is an instant smoothness result, which requires a delicate approximation and pseudolocality. 
\end{abstract}

\maketitle

\section{Introduction}\label{sec:introduction}
\input{0_introduction}
\section{Preliminaries}
\label{sec:preliminaries}
\input{1_preliminaries}
\section{Formation of a bubblesheet singularity}
\label{sec:singularity_formation}
\input{3_singularity_formation}
\section{Isolated pinching at origin}
\label{sec:topological_change_example}
\input{4_isolated_pinching}
\section{Smooth continuation of the flow}
\label{sec:instant_smoothness}
\input{5_instant_smoothness}
\bibliographystyle{alpha}
\bibliography{bibliography}
\end{document}

%% file: 0_introduction.tex
A hypersurface in \(\R^{n+1}\) evolves by mean curvature flow if it moves with velocity equal to its mean curvature vector at each point. While the flow remains smooth, the evolving hypersurfaces are diffeomorphic, and so the topology is preserved by the flow. However, solutions to the mean curvature flow are subject to finite-time singularity formation, and it is at these singular times that the topology may change. A central problem in the study of mean curvature flow lies in understanding the nature of singularities and the ways in which they can alter the topology of the evolving hypersurface. 

In the case of mean curvature flow starting from a smooth, closed, embedded surface in \(\R^3\), it is known that the genus is non-increasing with time \cite{White95}. In other words, the mean curvature flow only serves to simplify the topology of the evolving surface, up to creating connected components. An example of this is Grayson's dumbbell \cite{Grays89}, a mean curvature flow whose initial datum consists of two spheres connected by a thin neck. If the neck is sufficiently long and thin, the surface will pinch off and disconnect into two spheres, each of which then shrinks to a point. In higher dimensions, however, it is possible for the topology of a hypersurface to become more complex after the first singular time. For example, White discusses a \(SO(p)\times SO(q)\)-invariant generalisation of Grayson's dumbbell that starts off diffeomorphic to \(\Sb^{p+q-1}\), but whose \((q-1)\)\textsuperscript{th} homology group becomes non-trivial at some singular time \cite{White95}.  

Although White's example demonstrates that it is possible for an intermediate homology group (i.e. \(H_k\) for \(0<k<n\)) to become non-trivial under the mean curvature flow, the argument is non-constructive, and one can not deduce what the precise topology of the hypersurface is at any time. The expectation is that examples with well controlled topological change exist, but so far none have been rigorously constructed. In this paper, we take White's example as a starting point to construct the simplest possible example of an increase in topology. By this we mean that our singular set consists of one isolated point and the topology changes in a prescribed manner. Doing this requires control over the number, location and nature of singularities, as well as regularity of the flow past singular time. General theory does not at present provide such precise control, so we must perform a detailed analysis using the specific structure of the initial hypersurface. Our main result is as follows.

\begin{thm}\label{thm:main_theorem}
    There exists a smooth, mean convex embedding \(M_0\cong \Sb^{p+q-1}\) in \(\R^{p+q}\) whose evolution under the mean curvature flow has the following properties:
    \begin{enumerate}[(1)]
        \item the flow forms an isolated \(\R^p\times\Sb^{q-1}\) bubble-sheet singularity at the first singular time \(t_0\);
        \item there exists some \(\varepsilon>0\) such that each time slice \(M_{t_0+t}\) is a smooth embedding of \(\Sb^{q}\times \Sb^{p-1}\) for \(0<t < \varepsilon\).
    \end{enumerate}
\end{thm}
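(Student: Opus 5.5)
We will work entirely within the class of \(SO(p)\times SO(q)\)-invariant hypersurfaces. Write \(x\in\R^p\), \(y\in\R^q\) and set \(u=|x|\), \(v=|y|\). An invariant hypersurface is then described by a profile curve in the closed quadrant \(\{u\ge 0, v\ge 0\}\). The flow becomes the curve-shortening-type equation
\[
\kappa-\frac{(p-1)\nu_u}{u}-\frac{(q-1)\nu_v}{v},
\]
taken as the normal velocity of the profile. We choose \(M_0\) so that its profile is graphical over the \(u\)-axis, \(v=f_0(u)\) for \(u\in[0,R]\). Here \(f_0\) is even, positive, and meets the \(u\)-axis perpendicularly at \(u=R\), so \(M_0\cong\Sb^{p+q-1}\). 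We also make \(f_0\) have a thin neck at \(u=0\), i.e. \(f_0(0)\ll 1\) compared to \(f_0\) elsewhere, in the spirit of White's dumbbell, and arrange mean convexity.

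The steps are as follows.

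\textbf{Step 1.} Using the maximum principle applied to \(f_t'\), we show that graphicality and the sign pattern \(u f_t'\ge 0\) near the origin are preserved. We also show that \(f_t>0\) away from \(u=0\) up to the first singular time \(t_0\), by comparing with barriers such as shrinking tori or Angenent-type doughnuts that keep the outer part of the surface away from the axis. Mean convexity is preserved, so the theory of White and Haslhofer--Kleiner applies: tangent flows are generalized cylinders.

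\textbf{Step 2.} We show \(f_{t}(0)\to 0\) at \(t_0\) while \(\inf_{u\ge\delta} f_t>0\). This forces the singular set to be the single point \(0\), where the \(\Sb^{q-1}\) factor (the \(y\)-sphere) collapses. Rescaling and using the invariance together with a monotone quantity (e.g. a bound \(|f_t'|\le C\) or a gradient estimate on \(u f_t'/f_t\)), we identify the tangent flow as \(\R^p\times\Sb^{q-1}\) rather than a round point or another cylinder. We rule out \(\R^{p-1+j}\times\Sb^{\dots}\) alternatives by the symmetry: a cylinder whose axis is invariant under \(SO(p)\times SO(q)\) with collapse along \(y\) must be \(\R^p\times\Sb^{q-1}\). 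This gives (1).

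\textbf{Step 3.} This is the main obstacle: showing the flow is instantly smooth after \(t_0\) and has topology \(\Sb^q\times\Sb^{p-1}\). At \(t_0\), the profile is \(v=f_{t_0}(u)\) with \(f_{t_0}(0)=0\) and an asymptotic shape near \(0\) governed by the bubblesheet, roughly \(f_{t_0}(u)\sim c\,u/\sqrt{|\log u|}\). After \(t_0\), we expect the curve to detach from the origin and become a curve from the \(v\)-axis to the \(u\)-axis that avoids \(u=0\). Rotated, this gives \(\Sb^{p-1}\times\Sb^q\).

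To prove it, we would approximate \(M_{t_0}\) by smooth invariant hypersurfaces \(M^\varepsilon\) whose profiles near the origin are small cone-like curves, e.g. scaled copies of expanding solitons of the form \(\R^p\)-cone \(\times\) \(\Sb^{q-1}\) asymptotic to the tangent cone of \(M_{t_0}\) at \(0\). We then flow these approximations. Pseudolocality gives uniform curvature bounds on \(M^\varepsilon_t\) away from the origin for short time, and near the origin the expander structure gives scale-invariant bounds of order \(|A|\le C/\sqrt t\). Passing to the limit, with uniqueness of the mean convex (level set) flow ensuring the limit is \(M_{t_0+t}\), yields smoothness for \(0<t<\varepsilon\). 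The delicate point is that the tangent cone at a bubblesheet is degenerate (log corrections), so the approximating expanders must be chosen with shrinking aperture. The curvature bound near the tip must be shown uniform despite this, which we would attempt via a barrier argument with the explicit expander profiles.

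Finally, the topology follows because the smooth profile for \(t>t_0\) is an embedded arc joining the two axes and staying away from the \(v\)-axis. Preserving graphicality and the bound \(f>0\) on the outer part keeps it embedded, giving (2).
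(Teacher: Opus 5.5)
Your outline follows the paper's architecture in broad strokes: an invariant profile with a neck at \(|x|=0\), mean convexity, the cylinder classification, and approximation plus pseudolocality. However, the two steps that carry the theorem are asserted rather than proved.

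In Step~2, the claim that \(f_t(0)\to 0\) while \(\inf_{u\ge\delta}f_t>0\) \emph{is} the isolatedness statement, and nothing in Step~1 supplies it. Tori, doughnuts and spheres cannot be placed inside the thin neck so as to survive until \(t_0\). Monotonicity \(f_t'\ge 0\) near \(u=0\) does not stop a whole segment \([0,\eta]\) of a nearly flat neck from reaching zero simultaneously; this is why White's dumbbell gives no control of the singular set. The paper needs two further ingredients here.
\begin{enumerate}[(i)]
\item The Sturmian theorem, applied to \(f_t-c_r(t)\) with \(c_r\) the spatially constant (shrinking cylinder) solutions and combined with spherical barriers that pin down where the maximum lies. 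This shows that no new critical points appear before \(t_0\); your maximum principle for \(f_t'\) ``near the origin'' needs exactly this lateral control.
\item A quantitative subsolution \(\varphi=\alpha(x-\sigma)^2\) for the linear equation satisfied by \(f_t'\). This gives \(f_t'\ge\varphi\) on \((\sigma,a)\) up to \(t_0\), hence \(f_{t_0}(x)\ge\frac{\alpha}{3}(x-\sigma)^3\), so \(f_{t_0}>0\) on \((0,a]\) because \(\sigma>0\) is arbitrary (Proposition~\ref{prop:isolated_singularity}).
\end{enumerate}
You also need an outer barrier forcing the neck to pinch before the bulb disappears (the \(SO(p)\times SO(q)\) hyperboloid comparison, Lemma~\ref{lem:hyperboloid_comparison}). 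You further need an argument excluding singular points where \(f>0\) and at the outer endpoint \(u=R(t)\); there \(f_t=0\), so your infimum claim must be restricted. The paper gets this from the cylinder classification together with the disc and half-plane containments of Lemma~\ref{lem:disc_containment}.

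Step~3 stops at precisely the estimate that is needed, and the expander mechanism cannot deliver it in the form you state. The asymptotics \(f_{t_0}(u)\sim cu/\sqrt{|\log u|}\) are not established. The paper only proves that \(f_{t_0}\) is \(C^1\) near \(0\) with \(f_{t_0}(0)=f_{t_0}'(0)=0\) and \(f_{t_0}'>0\) nearby (Lemma~\ref{lem:differentiability}), and never needs more.

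More seriously, consider an invariant expander \(E_\theta\) coming out of the cone \(\{|y|=\theta|x|\}\). By the same barrier computation as in Lemma~\ref{lem:hyperboloid_comparison}, its tip lies at \(|x|=r_0\gtrsim\theta^{-1}\). There \(H=-\frac12\langle X,\nu\rangle=\frac{r_0}{2}\), with \(\nu\) pointing towards the origin. Hence \(\sup|A|\) on \(\sqrt t\,E_\theta\) is at least of order \(1/(\theta\sqrt t)\), and no bound \(|A|\le C/\sqrt t\) is uniform as the aperture shrinks.

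Moreover, for \(t\ge\delta\) the flow of a glued approximant near the origin is governed by \(M_{t_0}\) at scales far larger than the gluing scale, where the approximant already coincides with \(M_{t_0}\). So the expander carries no information there; what you need is a \(j\)-independent estimate for the flow emerging from \(M_{t_0}\) itself. The paper gets this with no asymptotics at all, using three ingredients.
\begin{enumerate}[(i)]
\item Mean convex inner approximations (Lemma~\ref{lem:mean_convex_approximation}), built from the \(C^1\) regularity at \(0\) and the bound on \(f''\) coming from \(|A|\le CH\) (Lemma~\ref{lem:u_xx_bound}).
\item The hyperboloid barrier with \(\varepsilon=0\), which keeps the evolving approximants a definite distance from \(P_2=\{x=0\}\) for \(t\ge\delta\).
\item \(\alpha\)-invertibility (Sturm again, Lemma~\ref{lem:alpha_invertible}), which writes the approximants near the origin as graphs of bounded height over balls of fixed radius in \(\{x_1=0\}\).
\end{enumerate}
The Evans--Spruck interior estimates (Proposition~\ref{prop:evans_spruck}) then give uniform bounds from a height bound alone, and pseudolocality handles the rest.

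Finally, your post-singular profile is mis-described. It is an arc with \emph{both} endpoints on \(\{v=0\}\); an arc from \(\{u=0\}\) to \(\{v=0\}\) would again give \(\Sb^{p+q-1}\). The arc avoids \(\{u=0\}\) because, by mean convexity, the enclosed regions satisfy \(K_t\subset\mathrm{int}\,K_{t_0}\), while \(K_{t_0}\cap\{x=0\}=\{0\}\).
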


Theorem \ref{thm:main_theorem} contributes to the current understanding of the mean curvature flow in the following key ways. Firstly, it is conjectured that singularities of a generic mean curvature flow are isolated in space and time (\cite[Problem 9]{Ilman03}, \cite[Conjecture 7.1]{ColdiP15}), and recent results show that cylindrical singularities in particular are generically isolated \cite{Sun25,Szeke26}. To the best of our knowledge, Theorem \ref{thm:main_theorem} is the first explicit construction of an isolated bubble sheet singularity for non-vanishing flow. Secondly, in the case of mean convex mean curvature flow, it is known that the possible topological changes that occur are restricted by the type of singularity that occurs. In particular if a \(\R^k\times \Sb^{n-k}\) singularity forms and the flow is smooth at some time before and after the singularity, then the inclusion \(\textnormal{int}(M_{t_2})\subset \textnormal{int}(M_{t_1})\) induces an injection from \(H_j(\textnormal{int}(M_{t_2}))\) to \(H_j(\textnormal{int}(M_{t_1}))\) for \(j\geq k\) \cite[Theorem 6.2]{White13}. That is to say, the structure of the tangent flow gives a lower bound above which there can be no increase in \(j\)\textsuperscript{th} Betti number of the interior of the flow. Written in terms of the interiors of the time slices, our example is one where \(\B^{p+q}\) becomes \(\B^{q+1}\times \Sb^{p-1}\) after passing though an \(\R^q\times \Sb^{p-1}\) bubblesheet singularity, and thus demonstrates this bound is sharp whilst undergoing the simplest allowable change in topology\footnote{Theorem 6.2 of \cite{White13} follows from applying Alexander duality to results describing topology change in the complement of mean convex level set flow, White demonstrates that the results used are sharp via an example that vanishes at the first singular time, and therefore does not satisfy the assumptions of Theorem 6.2 itself, although sharpness is to be expected}. 

To keep notation simple, we only prove Theorem \ref{thm:main_theorem} for \(SO(2)\times SO(2)\)-invariant mean curvature flow in \(\R^4\). However, there is nothing conceptually different involved in proving Theorem \ref{thm:main_theorem} in full generality; we make a note of the main modifications required in Remark \ref{rem:higher_dimensions} at the end of this paper. Even viewed purely as an example in \(\R^4\), Theorem \ref{thm:main_theorem} is already of interest since the study of mean curvature flow in \(\R^4\) is substantially more challenging than in \(\R^3\). In particular, it is the first dimension in which bubble-sheet singularities appear, which require much more delicate analysis than neck-pinch or spherical singularities. In terms of the broader picture, Bamler and Lai \cite{Bamle25} recently provided a complete classification of blow-ups at cylindrical singularities in all dimensions, following on from a line of work on the classification of ancient flows in \(\R^4\) \cite{Zhu22,ChoiHaslhHersh23,ChoiHaslhHersh24,Du23,Du24,Choi25,ChoiHaslh24}.

\subsection{Overview}
In this paper we work in the framework of \(SO(p)\times SO(q)\)-invariant mean curvature flow generated by a graphical profile curve. The analysis in this setting shares similarities with the study of \(SO(n)\)-invariant flow of a graphical profile curve carried out by Altschuler, Angenent and Giga \cite{Altsh95}. In their setting, they are able to show that there are finitely many singular times at which the mean curvature flow forms finitely many isolated neck-pinch singularities. In our case, the second direction of rotation introduces additional complexities which are not present in the \(SO(n)\)-case. Consequently, we do not attempt to prove results with the same level of generality, and instead focus on understanding the behaviour of one specific example. We restrict ourselves to mean convex mean curvature flow which makes certain components of our analysis more straightforward, most notably the blow-up analysis near singularities and non-fattening of the level set flow past the first singular time. 

The structure of our paper is as follows. In Section \ref{sec:preliminaries}, we give an account of standard definitions and results we will use throughout the paper, then prove some preliminary results about \(SO(p)\times SO(q)\)-invariant mean curvature flow generated by a graph. In Section \ref{sec:singularity_formation}, we switch to considering mean curvature flow in \(\R^4\) and show that if a mean convex \(SO(2)\times SO(2)\)-invariant mean curvature flow is non-vanishing at the first singular time, then all singularities must lie on one of the planes of rotation and all tangent flows are modelled on a shrinking \(\R^2\times \Sb^1\). An important consequence of this result is that it allows us to say something about the regularity of the flow at the first singular time, in particular that the limiting hypersurface must be generated by the graph of a continuous function that is smooth away from its zero set. 

In Section \ref{sec:topological_change_example}, we demonstrate a general process by which a profile curve can be modified to generate a non-vanishing, mean convex mean curvature flow, so that the results of Section \ref{sec:singularity_formation} can be applied. Moreover, we construct an example that forms an isolated singularity at the first singular time, thus establishing the validity of statement (1) of Theorem \ref{thm:main_theorem}. As mentioned previously, our example shares similarities with the generalised dumbbells described in Example 5 of \cite{White95}. However, in White's construction, a large portion of the initial profile curve is taken to be a flat line, so there is little hope of being able to show that singularities are isolated. In our case, we consider an initial profile curve that has a unique local minimum, and use the Sturmian theorem to show that this structure persists up to the first singular time. Even with this improved structure, a barrier argument is still needed to rule out the possibility that a 1-dimensional portion of the profile curve generates the singular set at the first singular time. 

Finally, in Section \ref{sec:instant_smoothness}, we demonstrate that the example constructed in Section \ref{sec:topological_change_example} becomes smooth immediately after the first singular time. This is the most involved part of the analysis, requiring a careful approximation of the flow at the singular time and different considerations close to and away from the singular point. The proof of Theorem \ref{thm:main_theorem} is then completed.

\subsection{Acknowledgments} 
The author is grateful to Stephen Lynch for suggesting this topic and for many helpful discussions. This work was supported by the London School of Geometry and Number Theory, a joint venture between University College London, Imperial College London and King's College London. 

%% file: 1_preliminaries.tex
We say that a family of smooth hypersurfaces \(\{M_t\}_{t\in I}\) in \(\R^{n+1}\) is a mean curvature flow if
\begin{align}\label{eq:mcf_def}
    \frac{\pa x}{\pa t} = -H(x)\nu(x)
\end{align}
for \(x\in M_t\), \(t\in I\), \(I\subset\R\) an open interval. Here, \(H\) is the scalar mean curvature and \(\nu\) the outward pointing unit normal at a point. The mean curvature flow is a non-linear second order parabolic partial differential equation and, as such, solutions with smooth initial data enjoy properties such as short-time existence and uniqueness \cite{Huisk99a}. Moreover, compact solutions to the mean curvature flow obey an avoidance principle and, by comparison to a shrinking sphere, must form singularities in finite time. A key component of the study of mean curvature flow is the analysis and classification of the types of singularities that can form. A standard tool used to study this is the tangent flow. 

\subsection{Tangent flows}
\label{sec:tangent_flows}
A \emph{tangent flow} \(\tilde{M}_t\) at a point \((x_0,t_0)\) is the limit (as \(s_k\to\infty\)) of a sequence of rescaled flows
\begin{align*}
    \tilde{M}_t^k = s_k\big(M_{s_k^{-2}t+t_0}-x_0\big), 
    \quad t\in[-s_k^2 t_0,\ 0)
\end{align*}
which correspond to translating the point \((x_0,t_0)\) to the origin \((0,0)\) then taking a sequence of parabolic dilations \((x,t)\mapsto (s_kx,s_k^2t)\). For weak notions of mean curvature flow, the above limit is taken in the sense of varifold convergence. However, in special cases, such as when the flow is mean convex, the limit is with respect to local smooth converge. By Huisken's monotonicity formula \cite{Huisk90} and further results due to Ilmanen and White \cite{White94,Ilman95}, it is known that all tangent flows at a singular point are self-similar shrinkers. Tangent flows in the mean convex setting are particularly well understood, as the following result demonstrates. 

\begin{thm}[{\cite{White00,White03,White15}}]\label{thm:white_classification}
    Let \(M_0\subset \R^{n+1}\) an embedded, closed, smooth, mean convex hypersurface, and let \(\{M_t\}\) be the corresponding mean curvature flow. If \(X = (x_0,t_0)\) is a singular point, then each tangent flow at \(X\) is a shrinking sphere or cylinder. In other words, if \(\tilde{M}_t\) is a tangent flow to \(M_t\) at \(X\), then \(\tilde{M}_{-1}\) is (up to rotation) of the form \(\R^k\times \Sb^{n-k}\) for some \(k\in \{0,1,\dots,n-1\}\).
\end{thm}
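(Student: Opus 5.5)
This classification is cited from White's work and is not proved in the paper, so what follows is a plan for reconstructing White's argument rather than something derived from the paper's own earlier results.

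\emph{Step 1: tangent flows exist and are self-shrinkers.} For a mean convex flow, White's regularity theory \cite{White00,White03} gives three facts:
\begin{itemize}
\item the flow is non-collapsed in a suitable sense;
\item the level set flow does not fatten;
\item every blow-up limit at a singular point is smooth, multiplicity one and convex away from a singular set of parabolic Hausdorff dimension at most \(n-1\).
\end{itemize}
Take a singular point \(X=(x_0,t_0)\) and rescalings \(\tilde M^k_t\). Huisken's monotonicity formula \cite{Huisk90} bounds the Gaussian density ratios uniformly. Brakke/Ilmanen compactness \cite{Ilman95} then extracts a subsequential limit, and the equality case of monotonicity \cite{White94} makes that limit backwards self-similar. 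So \(\tilde M_{-1}\) is a self-shrinker, \(H=\langle x,\nu\rangle/2\). Mean convexity passes to the limit, so \(H\ge 0\) on \(\tilde M_{-1}\).

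\emph{Step 2: the regular part is a smooth, embedded, mean convex shrinker.} Using White's structure theory, I would upgrade varifold convergence to multiplicity-one smooth convergence away from a small singular set. By the strong maximum principle for the linearised equation \(L H = H\), applied to \(H\) on the connected regular part, either \(H\equiv 0\) or \(H>0\).
\begin{itemize}
\item If \(H\equiv 0\), then \(\langle x,\nu\rangle\equiv 0\), so the shrinker is a cone, hence a static minimal cone. Since \(X\) is a singular point, its Gaussian density exceeds \(1\), so the cone is not a hyperplane. Nontrivial cones are excluded in the mean convex setting: White's theory shows blow-ups are convex, and a convex minimal cone is a plane.
\item Hence \(H>0\).
\end{itemize}

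\emph{Step 3: classify mean convex shrinkers.} Apply Huisken's classification, as extended by Colding--Minicozzi to shrinkers that may be complete and noncompact, have polynomial volume growth and are allowed a small singular set. The extension to a singular set of codimension large enough for cutoff arguments is the delicate part. Use the Simons-type identity
\[
L\!\left(\frac{|A|}{H}\right)=\dots
\]
to show \(|A|/H\) is constant, and integrate it against the Gaussian weight with cutoffs. The Gaussian decay handles infinity. Near the singular set, the dimension bound (capacity zero) lets the cutoffs be absorbed.

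This yields \(\nabla A\equiv 0\) in the appropriate sense, so the shrinker splits as \(\R^k\times \Sb^{n-k}(\sqrt{2(n-k)})\) up to rotation. The singular set is then empty a posteriori. Finally \(k\le n-1\), since \(k=n\) would be a hyperplane, which has density one and so contradicts \(X\) being singular.

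\emph{Main obstacle.} The hard step is Step 2: proving the limit has multiplicity one and is regular enough for the integration by parts in Step 3. This is exactly the content of White's non-collapsing and regularity results for mean convex flows \cite{White00,White03,White15}, and I would invoke them as black boxes rather than reprove them.
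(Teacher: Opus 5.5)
The paper gives no argument for this statement; it is quoted from White. Your plan takes the same hard inputs (multiplicity one, convexity and smallness of the singular set of blow-ups) from those papers, so to that extent it agrees with the paper. The one step you argue yourself, the cutoff argument in Step 3, does not work as written, because a capacity bound on the singular set does not by itself absorb the cutoffs. With \(f=|A|/H\), Huisken's computation gives \(\mathrm{div}(H^2e^{-|x|^2/4}\nabla f)\ge 0\) on the regular part. Testing against \(f\varphi^2\) yields
\[
\int \varphi^2H^2|\nabla f|^2e^{-|x|^2/4}\le 4\int |A|^2|\nabla\varphi|^2e^{-|x|^2/4}.
\]
The error term carries \(|A|^2\), which may blow up at the singular set, whereas zero capacity only makes \(\int|\nabla\varphi|^2\) small.

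What closes the argument is a curvature bound near the singular set, and you already have one. The convexity you invoked in Step 2 gives \(|A|\le H\le |x|/2\) on the regular part. At the first singular time, the Huisken--Sinestrari pinching \(|A|\le CH\) used in Lemma \ref{lem:u_xx_bound} would also do. The right-hand side is then at most \(\int|x|^2|\nabla\varphi|^2e^{-|x|^2/4}\), which does tend to zero for cutoffs around a set of zero capacity and at infinity. In fact, once convexity is available, the slice is locally a Lipschitz graph that is stationary for the Gaussian area. Elliptic regularity then removes the singular set outright, and only the smooth classification is needed.

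Separately, Huisken's identity yields only that \(|A|/H\) is constant and \(\nabla(A/H)=0\), not \(\nabla A\equiv 0\). Besides the parallel case, which gives \(\R^k\times\Sb^{n-k}\), you must exclude a rank-one splitting \(\Gamma\times\R^{n-1}\), where \(\Gamma\) is a self-shrinking curve of positive curvature. Embeddedness (or again convexity), together with the Abresch--Langer classification, is what forces \(\Gamma\) to be a circle. With these repairs your outline is a sound reconstruction, but its substance is still the cited results, exactly as in the paper.
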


If the tangent flow at a singular point is a shrinking \(\R\times \Sb^{n-1}\), we call the corresponding singularity a \emph{neck-pinch}, and if the tangent flow is a shrinking \(\R^k\times\Sb^{n-k}\) for \(k\geq 2\), we call the corresponding singularity a \emph{bubble-sheet}. Along with understanding the local behaviour near singularities, we also want to be able to meaningfully extend the flow past a singular time. This requires a weak notion of mean curvature flow. Here, we consider the level set flow. 

\subsection{Level set flow}
\label{sec:level_set_flow}

Introduced independently by Chen-Giga-Goto \cite{Chen91} and Evans-Spruck \cite{Evans91,Evans92a,Evans92b,Evans95}, the level set flow is a viscosity solution to the mean curvature flow. 
For a closed subset \(K\subset \R^{n+1}\), we denote by \(F_t(K)\) the result of letting \(K\) flow for time \(t\) by the level set flow. The level set flow has the following key properties which we make frequent use of (c.f. \cite[Section 2]{White00}):
\begin{enumerate}[(1)]
    \item if \(t\mapsto M_t\) is a family of smooth surfaces moving classically by mean curvature and \(K\) is a set such that \(\pa K = M_0\), then \(\pa F_{t}(K) = M_t\) for all \(t\) before the first singular time;
    \item if \(K\) and \(K'\) are disjoint closed sets with \(K\) compact, then \(F_t(K)\) and \(F_t(K')\) are also disjoint;
    \item there is long time-existence and uniqueness for the flow.
\end{enumerate}

The notion of mean convexity extends well to level set flow. We say that a compact set \(K\subset \R^{n+1}\) is \emph{mean convex} if there exists \(\varepsilon>0\) such that \(F_{h}(K) \subset \textnormal{int}(K)\) for \(0< h < \varepsilon\). If \(K\) is compact and mean convex, then it can be shown that \(F_{t+h}(K) \subset \textnormal{int}(F_t(K))\) for all \(t+h>t\geq 0\). In particular, mean convexity is preserved by the level set flow. If \(\pa K\) is smooth, then this definition of mean convexity is equivalent to saying the mean curvature of the surface is positive everywhere. 

The level set flow starting from the boundary of non-smooth initial initial data may develop a non-empty interior through a process known as \emph{fattening}. However, if the initial data is mean convex, this cannot happen. More formally, if \(K\) is compact and mean convex, then \(F_t(\pa K) = \pa F_t(K)\) \cite[Corollary 3.3]{White00}. In this paper, we will only deal with mean convex level set flow, and will therefore always talk about the level set flow of a hypersurface, rather than its interior. When it is necessary to distinguish between the mean curvature flow and level set flow we will write \(F_t(M_0)\) for the level set flow and \(M_t\) for a smooth mean curvature flow with initial datum \(M_0\). 

The level set flow of a general set \(U\) behaves will with approximations, in the sense that the level set flow of inner and outer approximations\footnote{by an \emph{inner approximation} we mean a collection \(U^j\) such that \(U^j\subseteq U^{j+1}\subseteq U\) and \(U = \bigcup_{j}U^j\), and by an \emph{outer approximation} we mean a collection \(E^j\) such that \(U\subset E^{j+1}\subset E^j\) and \(U = \bigcap_{j}E^j\). We write \(U^j\uparrow U\) in the first case and \(E^j\downarrow U\) in the second} of \(U\) converge to the level set flow of \(U\) \cite[Section 3]{Altsh95}. However, for our purposes, we would like the level set flow of the boundaries \(\pa U^j\) to converge to the level set flow of \(\pa U\) in some sense too. We make this more precise in the following remark, which we rely on for our proof of instant smoothness in Section \ref{sec:instant_smoothness}.

\begin{rem}\label{rem:mean_convex_limit}
    First note that if \(U\) is mean convex then the inner and outer flows coincide and \(F_t(U) = \overline{\bigcup_j F_t(U^j)}\) \cite[Section 3]{Altsh95}. Using in addition that \(F_t(U^j)\uparrow F_t(U)\) as discussed above, a standard topological argument implies that \(\pa F_t(U^j)\) converges to \(\pa F_t(U)\) in the Hausdorff sense. If we additionally take our inner approximation \(U^j\) to be mean convex, then the non-fattening result above tells us that \(F_t(\pa U^j)\to F_t(\pa U)\) in the Hausdorff sense. 
\end{rem}

\subsection{\(SO(p)\times SO(q)\)-invariant mean curvature flow generated by a graph}
\label{sec:so_2_so_2_invariant_flow}
For the remainder of this paper, we restrict our attention to mean curvature flow whose initial data is invariant under the natural action of \(SO(p)\times SO(q)\). We first set up some notation. We denote coordinates in \(\R^{p+q}\) by \((\bar{x},\bar{y})=(x_1,\dots,x_p,y_1,\dots, y_q)\), the canonical basis of \(\R^{p+q}\) by \(\{\eb_1,\eb_2,\dots,\eb_{p+q}\}\), and let
\begin{align*}
    &P_0 = \{(\bar{x},\bar{y})\in \R^{p+q}: x_2 =\dots =x_p = y_2= \dots = y_q = 0\},\\
    &P_1 = \{(\bar{x},\bar{y})\in \R^{p+q}: \bar{y} = 0\}, \quad
    P_2 = \{(\bar{x},\bar{y})\in \R^{p+q}: \bar{x} = 0\}.
\end{align*}
Any hypersurface invariant under the action of \(SO(p)\times SO(q)\) can be generated by the rotation of a profile curve \(\gamma\) lying in \(P_0 \cap \{x_1\geq 0, y_1\geq 0\}\) around the planes \(P_1\) and \(P_2\). Symbolically,
\begin{align*}
    \Sigma_\gamma = \{(\bar{x},\bar{y})\in \R^{p+q}: (|\bar{x}|,\ |\bar{y}|) \in \gamma\}
\end{align*}

We remark that this symmetry is preserved by the level set flow. Indeed, let \(F_t(M_0)\) be a level set flow with \(SO(p)\times SO(q)\)-invariant initial data \(M_0\). For arbitrary \(R\in SO(p)\times SO(q)\) the rotation \(\{R(M_t)\}_{t<t_0}\) is still a level set flow. On the other hand, \(M_0 = R(M_0)\) by assumption, so it follows from the long-time existence and uniqueness of the level set flow that the two flows coincide. In particular, the preservation of symmetry means that up to the first singular time, where the level set flow and smooth mean curvature flow coincide, the mean curvature flow of \(M_t\) can be entirely determined by the evolution of a 1-parameter family of profile curves \(\gamma_t\).

From now on we make the additional assumption that the profile curve \(\gamma\) is a graph \(\gamma_u \coloneqq \{(x,0,u(x),0):x\in[\xi_1,\xi_2]\}\), where \(u\in C^0([\xi_1,\xi_2], \R_{\geq 0})\) and \(0\leq \xi_1 < \xi_2 < \infty\). We denote the corresponding hypersurface by \(\Sigma_u = \Sigma_{\gamma_u}\). We say that \(\gamma_u\) \emph{crosses \(P_2\)} if \(\xi_1 = 0\) and \emph{avoids \(P_2\)} if \(\xi_1>0\). Similarly \(\Sigma_u\) crosses (or avoids) \(P_2\) if it is generated by a profile curve \(\gamma_u\) that does so. By imposing further conditions on \(u\), we can ensure that \(\Sigma_u\) is smooth and closed. In particular, the graph of \(u\) needs to meet the \(x_1\)- and \(y_1\)-axes orthogonally. This corresponds to the condition that
\begin{align}\label{eq:avoiding_P2_smoothness}
    u(x) > 0 \textnormal{ for } x\in (\xi_1,\xi_2), \ u(\xi_1) = u(\xi_2) = 0, \ u_x(\xi_1) = \infty \textnormal{ and } u_x(\xi_2) = -\infty
\end{align}
in the case where \(\Sigma_u\) avoids \(P_2\), and
\begin{align}\label{eq:crossing_P2_smoothness}
    u(x) > 0 \textnormal{ for } x\in [0,\xi_2), \ u(\xi_2) = 0, \ u_x(0) = 0 \textnormal{ and } u_x(\xi_2) = -\infty
\end{align}
in the case where \(\Sigma_u\) crosses \(P_2\). Note that if \(\Sigma_u\) is smooth, closed and crosses \(P_2\), then \(\Sigma_u\cong \Sb^{p+q-1}\), whereas if \(\Sigma_u\) is smooth, closed and avoids \(P_2\), then \(\Sigma_u\cong \Sb^{p-1}\times \Sb^q\). This fully classifies the possible topological structures of \(\Sigma_u\). 

Whenever the expressions are well-defined, the principal curvatures of \(\Sigma_u\) are
\begin{align}\label{eq:principal_curvatures}
    \kappa_1 = \frac{-u_{xx}}{(1+u_x^2)^\frac{3}{2}}, 
    \quad 
    \kappa_{2} = \dots = \kappa_{q} = \frac{1}{(1+u_x^2)^\frac{1}{2}u},
    \quad
    \kappa_{1+q} = \dots = \kappa_{n} = \frac{-u_x}{(1 + u_x^2)^\frac{1}{2}x},
\end{align}
where our sign convention is such that the principal curvatures of the sphere are positive. As a slight abuse of notation these are not in order of size, as is standard. Instead, \(\kappa_1\) is the curvature of the curve, \(\kappa_2,\dots,\kappa_{q}\) are obtained by rotation around \(P_1\) and \(\kappa_{1+q},\dots,\kappa_n\) are obtained by rotation around \(P_2\). Note that in order for the principal curvatures \eqref{eq:principal_curvatures} to be defined everywhere along the profile curve (except possibly at its end-points) we require that \(u\in C^0([0,\xi_2])\cap C^\infty([0,\xi_2))\) in the case where \(\Sigma_u\) crosses \(P_2\) and \(u\in C^0([\xi_1,\xi_2])\cap C^\infty((\xi_1, \xi_2))\) in the case where \(\Sigma_u\) avoids \(P_2\). For the sake of concision, for the remainder of this paper we say that \(M_0\) satisfies hypothesis:
\begin{enumerate}[(H1)]
    \item if \(M_0 = \Sigma_u\) is a smooth, closed \(SO(p)\times SO(q)\)-invariant hypersurface avoiding \(P_2\) generated by the graph of some \(u\in C^0([\xi_1,\xi_2])\cap C^\infty((\xi_1,\xi_2))\) satisfying conditions \eqref{eq:avoiding_P2_smoothness};
    \item if \(M_0 = \Sigma_u\) is a smooth, closed \(SO(p)\times SO(q)\)-invariant hypersurface crossing \(P_2\) generated by the graph of some \(u\in C^0([\xi_1,\xi_2])\cap C^\infty((\xi_1,\xi_2))\) satisfying conditions \eqref{eq:crossing_P2_smoothness}.
\end{enumerate}

We now want to understand the evolution of \(M_0 = \Sigma_u\) under the mean curvature flow. It can be shown through a direct computation that if the evolution of \(M_t\) remains represented by a profile curve that is the graph of some function \(u\), then \(u\) must evolve according to 
\begin{align}\label{eq:evolution_equation}
    \frac{\pa u}{\pa t} = \frac{u_{xx}}{1+u_x^2} - \frac{q-1}{u} + \frac{(p-1)u_x}{x},
\end{align}
where \(u_t = u(\cdot,t):[\xi_1(t),\xi_2(t)]\to \R_{\geq 0}\) is a 1-parameter family of functions such that \(M_t = \Sigma_{u_t}\).  The following result tells us that \(u\) is a smooth function away from its end-points up to the first singular time, and so the evolution equation \eqref{eq:evolution_equation} is well-defined up to this time. 

\begin{prop}\label{prop:graphicality}
    Suppose that \(M_0\) satisfies hypothesis (H1) or (H2), and let \(\{M_t\}_{t< t_0}\) be the corresponding smooth mean curvature flow. Up to the first singular time \(t_0\), \(M_t\) remains a smooth \(SO(p)\times SO(q)\)-invariant hypersurface generated by some \(u_t\in C^0([\xi_1(t),\xi_2(t)])\cap C^\infty((\xi_1(t),x_2(t)))\) or \(u_t\in C^0([0,\xi_2(t)])\cap C^\infty((0,x_2(t)))\).
\end{prop}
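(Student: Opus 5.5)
Symmetry is already settled: the level set flow preserves \(SO(p)\times SO(q)\)-invariance and agrees with the smooth flow before \(t_0\). So for each \(t<t_0\), \(M_t=\Sigma_{\gamma_t}\) for a smooth profile curve \(\gamma_t\) in the closed quadrant \(P_0\cap\{x_1\geq 0,y_1\geq 0\}\), meeting the axes orthogonally (\(\gamma_t\) meets \(P_2\) orthogonally in the crossing case). What remains is to show that \(\gamma_t\) stays a graph over the \(x_1\)-axis. I would use a continuity argument in \(t\) together with a maximum principle for the ``vertical'' component of the normal. Define
\begin{align*}
    w = \langle \nu, \eb_{p+1}\rangle\text{-type quantity}, \qquad \text{more precisely } w(X)=\Big\langle \nu(X), \Big(0,\frac{\bar y}{|\bar y|}\Big)\Big\rangle ,
\end{align*}
that is, the component of the unit normal in the radial \(\bar y\)-direction. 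Where \(\gamma_t\) is a graph \(y=u(x)\) with \(u>0\), this quantity equals \((1+u_x^2)^{-1/2}>0\). Graphicality in the open interval is equivalent to \(w>0\) on \(M_t\setminus P_1\).

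The key step is the evolution equation for \(w\). For a rotation Killing field, \(\langle\nu,K\rangle\) satisfies the Jacobi equation \((\pa_t-\Delta)\langle\nu,K\rangle=|A|^2\langle\nu,K\rangle\). Our \(w\) is not of that form, because the radial direction varies. I would therefore compute directly with the profile curve. The equation is \(\pa_t\gamma=-H\nu\), with \(H=\kappa_1+(q-1)\frac{\nu^y}{y}+(p-1)\frac{\nu^x}{x}\). Parametrising by arclength and writing \(\nu=(\nu^x,\nu^y)=(-\sin\theta,\cos\theta)\), one obtains a parabolic equation for the tangent angle \(\theta\), and hence a linear equation for \(w=\cos\theta\) of the form
\begin{align*}
    \pa_t w = \pa_s^2 w + b\,\pa_s w + c\, w,
\end{align*}
with smooth coefficients on compact subsets away from the axes.

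Next I need \(w>0\) near the endpoints. At \(\xi_2(t)\) (and at \(\xi_1(t)\) in case (H1)), smoothness of \(M_t\) and orthogonality force \(\nu=\pm\eb_1\) there. Smoothness of \(M_t\) up to \(t_0\) gives uniform curvature bounds on \([0,t_0-\delta]\). These show that near the endpoint the curve is a small graph over the \(y\)-axis with nonzero slope sign, so \(w>0\) just inside the endpoint. In case (H2) the point on \(P_2\) is smooth with \(\nu=\eb_{p+1}\) direction and \(w=1\). Local smooth dependence on time keeps these boundary collars uniformly positive on compact time intervals.

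With that, a maximum principle argument applies. Let \(t^*\) be the first time at which \(w\) vanishes somewhere in the interior, and suppose \(t^*<t_0\). At a first zero, \(w\geq 0\) nearby, the linear equation holds, and the strong maximum principle gives \(w\equiv 0\) on the connected profile curve. This contradicts the collar positivity. Hence \(w>0\) for all \(t<t_0\). Therefore \(\gamma_t\) is the graph of a function \(u_t\) that is smooth in the interior and continuous up to the endpoints, and the endpoints \(\xi_i(t)\) vary continuously.

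I expect the main obstacle to be deriving the evolution of \(w\) and checking its coefficients, in particular the \(1/x\) and \(1/y\) terms. These are singular at the axes, so I will deliberately apply the maximum principle only on the region away from the axes, where the collar argument already gives \(w>0\).
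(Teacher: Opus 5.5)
Your strategy of a maximum principle for the vertical component of the normal is the right one, and the interior part works. The evolution of $w=\cos\theta$ along the profile curve is indeed of the form $\partial_t w=\partial_s^2 w+b\,\partial_s w+c\,w$. The collar at the $P_2$ point is genuine, since $w=1$ there. A first interior zero is ruled out by the strong maximum principle on compact subarcs.

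\textbf{The gap: the collar at the $P_1$ endpoints.} This concerns $\xi_2(t)$, and also $\xi_1(t)$ under (H1). There $\nu=\pm\eb_1$, so $w=0$ \emph{at} the endpoint for every $t$. Positivity on a punctured neighbourhood therefore follows neither from continuity nor from curvature bounds. Near $\xi_2(t)$ write the curve as $x=g(y,t)$, where $g$ is even in $y$ because $M_t$ is smooth across $P_1$. Then
\[ w=-\frac{g_y}{(1+g_y^2)^{1/2}}, \qquad g_y(y,t)=g_{yy}(0,t)\,y+O(y^3). \]
So the sign of $w$ just inside the endpoint is governed, to leading order, by the sign of the principal curvature $\kappa_1=\kappa_2$ at the axis point. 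Curvature bounds give you a graph over the $y$-axis of uniform size, but they say nothing about this sign. Nothing else in your argument controls it; the proposition does not even assume mean convexity, and mean convexity alone would not force it. So ``nonzero slope sign'' is essentially the conclusion you are trying to prove, localised at the axis.

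Without uniform punctured collars, the first-time argument also fails. A sign change of $w$ could be born at the endpoint at time $t^*$ and move inward. Interior zeros would then exist for $t>t^*$, but there would be no interior zero at $t^*$ for the strong maximum principle to act on. A boundary-point (Hopf-type) argument directly on the profile curve is also obstructed, because $b$ and $c$ are singular at the endpoint, of orders $1/y$ and $1/y^2$ respectively.

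\textbf{The missing idea: desingularise $w$ at the axis.} By the symmetry, $\langle\nu,\eb_{p+1}\rangle=w\,y_1/|\bar y|$. This is a smooth function on all of $M_t$, including across $P_1$. Being the normal component of a fixed translation, it satisfies the Jacobi equation $(\partial_t-\Delta)\langle\nu,\eb_{p+1}\rangle=|A|^2\langle\nu,\eb_{p+1}\rangle$, with bounded coefficients on $[0,t_0-\delta]$.

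This is exactly the paper's proof. On $\hat M_t=M_t\cap\{y_1>0\}$ the function is non-negative initially and vanishes identically on $\partial\hat M_t$ by the $SO(q)$-symmetry. The weak maximum principle gives $\langle\nu,\eb_{p+1}\rangle\geq 0$, and the strong maximum principle gives strict positivity on $\hat M_t\setminus\partial\hat M_t$. The axis points are then ordinary points of $\partial\hat M_t$ carrying zero Dirichlet data. No collar estimate is needed, and your profile-curve computation of the evolution of $w$ becomes unnecessary too.
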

\begin{proof}
    Due to the preservation of \(SO(p)\times SO(q)\) symmetry, the mean curvature flow \(\{M_t\}\) can be described by the evolution of a 1-parameter family of profile curves \(\gamma(\cdot,t)\). If we can rule out vertical tangents (i.e. tangents parallel to \(\eb_{p+1}\)) from forming at any time up to the singular time and away from \(P_1\), then the desired result follows from the implicit function theorem. Let \(\nu\) be the outward normal to \(M_t\). Its restriction to \(P_0\) induces a normal on \(\gamma\) which we denote by \(\tilde{\nu}\). Vertical tangents occur precisely when \(\tilde{\nu}\cdot \eb_{p+1} = 0\). To rule this out, it suffices to show that \(\nu \cdot\eb_{p+1}>0\) on \(\hat{M}_t \backslash \pa \hat{M}_t\), where \(\hat{M}_t \coloneqq M_t\cap \{y_1>0\}\), as this then implies that \(\tilde{\nu}\cdot \eb_{p+1}>0\) as well. Our assumptions on \(M_0\) give that \(\nu\cdot \eb_{p+1}\geq 0\) on \(\hat{M}_0\), with equality holding on \(\pa \hat{M}_0\) only. The smoothness of the flow up to the first singular time implies that \(\nu\cdot \eb_{p+1} \geq 0\) for \(0\leq t< t_0\). Now, using that \(\nu\cdot \eb_{p+1}\) satisfies \((\pa_t - \Delta)(\nu\cdot \eb_{p+1}) = |A|^2(\nu\cdot \eb_{p+1})\), it follows from the weak maximum principle that \(\nu\cdot \eb_{p+1} \geq 0\) on \(\hat{M}_t\), and by the strong maximum principle that \(\nu\cdot \eb_{p+1} > 0\) on \(\hat{M}_t\backslash \pa \hat{M}_t\).
\end{proof}

As a result of Proposition \ref{prop:graphicality}, we are able to reduce studying the mean curvature flow of a hypersurface to studying the solution \(u(\cdot,t)\) to a one-dimensional parabolic partial differential equation \eqref{eq:evolution_equation}. This enables us to make use of the following Sturmian theorem, which will later allow us to control the number of minima and maxima of our evolving profile curve. 

\begin{thm}[{\cite[Section 2]{Angen91}}]\label{thm:sturmian_theorem}
    Let \(f = f(x,t,u,p,q)\) be a smooth function of its five variables, and let \(u\), \(v\) be two smooth solutions of the non-linear equation
    \begin{align}\label{eq:sturmian}
        \frac{\pa u}{\pa t} = f(x,t,u,u_x,u_{xx}).
    \end{align}
    on \([x_1,x_2]\times [0,t_0]\). Assuming that \eqref{eq:sturmian} is parabolic, if \(u(x_j,t)\neq v(x_j,t)\) for \(0\leq t \leq t_0\), and \(j=1,2\), the number of zeros of \(x\mapsto v(x,t)-u(x,t)\), when counted with multiplicity\footnote{the number of zeros of \(f\) in \((a,b)\) counted with multiplicity is the sum of the orders of all zeros of \(f\) in \((a,b)\), where the order of a zero at \(x_0\) is the smallest integer \(k\) for which \(f^{(k)}(x_0)\neq 0\)}, is finite for any \(t\in (0,t_0]\), and does not increase with time. Moreover, at any time \(t\) where the zero is of order \(k>1\), the number of zeros at time \(t\) drops by at least \(k-1\). 
\end{thm}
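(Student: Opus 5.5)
We would reduce the statement to a zero-counting result for a single linear parabolic equation and then analyse the zero set locally.

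\emph{Step 1: linearise.} Set \(w=v-u\). Applying the fundamental theorem of calculus along the segment joining \((u,u_x,u_{xx})\) to \((v,v_x,v_{xx})\) gives
\begin{align*}
    w_t = a(x,t)\,w_{xx} + b(x,t)\,w_x + c(x,t)\,w .
\end{align*}
Here \(a=\int_0^1 \pa_q f\,ds\), \(b=\int_0^1 \pa_p f\,ds\) and \(c=\int_0^1 \pa_u f\,ds\), evaluated along that segment. The coefficients are smooth on \([x_1,x_2]\times[0,t_0]\). Parabolicity of \eqref{eq:sturmian} together with compactness gives \(a\geq \lambda>0\).

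\emph{Step 2: normalise the equation.} We change the spatial variable by \(\xi=\int^x a^{-1/2}\), which is time dependent but smooth. We then multiply \(w\) by a smooth positive factor \(e^{\phi(x,t)}\). Together these reduce the equation to \(\tilde w_t=\tilde w_{\xi\xi}+\tilde c\,\tilde w\). These changes are diffeomorphisms and multiplications by nonvanishing functions, so zeros and their orders are preserved, and the hypothesis \(w(x_j,t)\neq 0\) becomes nonvanishing on the (moving) lateral boundary. By continuity and compactness, the boundary hypothesis also shows that no zero can enter or leave through the ends, so all changes in the zero count happen at interior points.

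\emph{Step 3: local structure at a zero.} This is the main step and the expected obstacle. Let \(w(x_0,t_1)=0\) with \(t_1>0\) and \(x_0\) interior. First, \(w\) cannot vanish to infinite order in \(x\) at \((x_0,t_1)\). If it did, a unique continuation/backward uniqueness argument for the normalised equation would force \(w\equiv0\), contradicting the boundary condition. So the zero has some finite order \(k\).

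Next we would show that, in parabolic scaling,
\begin{align*}
    w(x,t)=C\,H_k(x-x_0,\,t-t_1)+o\big((|x-x_0|+|t-t_1|^{1/2})^k\big),
\end{align*}
where \(C\neq0\) and \(H_k\) is the degree-\(k\) heat polynomial. The route is to Taylor expand \(w\) in \(x\) and \(t\) and use the equation repeatedly to convert \(t\)-derivatives into \(x\)-derivatives.

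Given this expansion, the elementary facts about \(H_k\) finish the local analysis:
\begin{enumerate}[(i)]
    \item for \(t<t_1\), \(H_k(\cdot,t-t_1)\) has \(k\) simple real zeros, all collapsing to \(x_0\) as \(t\to t_1\);
    \item for \(t>t_1\), it has at most one zero (one if \(k\) is odd, none if \(k\) is even).
\end{enumerate}
A scaling/implicit function argument transfers these counts to \(w\) in a small parabolic neighbourhood. Near \(x_0\), the \(k\) zeros just before \(t_1\) therefore become at most one zero just after. When \(k>1\) this is a drop of at least \(k-1\).

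\emph{Step 4: global conclusion.} Finiteness of the zero set for \(t>0\) follows from compactness: each zero is isolated in space by Step 3, and zeros stay away from the ends. Simple zeros move continuously by the implicit function theorem, so the count is locally constant in time away from multiple zeros. Combining this with Step 3 at each multiple zero shows that the number of zeros counted with multiplicity never increases, and drops by at least \(k-1\) whenever a zero of order \(k>1\) occurs.
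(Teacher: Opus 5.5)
The paper gives no proof of this statement; it is imported from Angenent. The standard derivation, as in the cited source, is exactly your Step 1: $w=v-u$ solves a linear equation $w_t=aw_{xx}+bw_x+cw$ with smooth coefficients and $a>0$. One then applies Angenent's zero-number theorem for linear one-dimensional parabolic equations.

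Your Steps 2--4 reconstruct that linear theorem along the usual lines, and in the smooth setting they are sound. For the normalised equation, the induction gives $\partial_t^i\partial_x^j w(x_0,t_1)=0$ for $2i+j<k$ and $\partial_t^i\partial_x^j w(x_0,t_1)=\partial_x^k w(x_0,t_1)$ for $2i+j=k$. Hence $w=\frac{1}{k!}\partial_x^k w(x_0,t_1)H_k+O(\rho^{k+1})$. The Hermite zero counts then transfer after the rescaling $x-x_0=|t-t_1|^{1/2}y$. You should add one remark: the intermediate region $R|t-t_1|^{1/2}\le |x-x_0|\le\delta$ contains no zeros, because there $H_k$ is comparable to $(x-x_0)^k$ and dominates the remainder. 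Also, at $t=0$ only the forward half of Step 3 is needed. It is available whenever the count at $t=0$ is finite, and this is how the paper uses the theorem.

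The one step that needs repair is the exclusion of infinite-order zeros, which is the real content of the linear theorem. Backward uniqueness is not the right tool: it requires $w(\cdot,t_1)\equiv 0$ on the whole interval, which you do not have. A backward-neighbourhood version of unique continuation at a point is actually false. Take the time-reversed Tychonoff solution $w(x,t)=\sum_{k\ge 0} g^{(k)}(t)\,x^{2k}/(2k)!$, with $g(t)=e^{-t^{-2}}$ for $t<0$ and $g\equiv 0$ for $t\ge 0$. It solves $w_t=w_{xx}$ and vanishes to infinite order at $(0,0)$, yet $w(0,t)=g(t)\neq 0$ for $t<0$.

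What you should invoke instead is strong unique continuation in fixed-time form. If $w$ vanishes to infinite order at $(x_0,t_1)$ with $t_1>0$, then $w(\cdot,t_1)\equiv 0$ near $x_0$. Note that infinite order in $x$ at time $t_1$ gives infinite space-time order by the same induction as above. This is the technical core of Angenent's zero-number paper in one dimension; higher-dimensional versions are due to Poon and X.-Y.~Chen.

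The contradiction then comes from the slice $t=t_1$ itself. The set of points of $(x_1,x_2)$ where $w(\cdot,t_1)$ vanishes to infinite order is closed, and by this result it is open. So it is either empty or all of $(x_1,x_2)$, and the latter contradicts $w(x_j,t_1)\neq 0$. With this substitution your proposal is a complete proof. Its only deep input is exactly the ingredient the cited source supplies.
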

\begin{rem}\label{rem:curvilinear}
    It is possible to apply Theorem \ref{thm:sturmian_theorem} on a non-rectangular domain, as is done in Section 2.1 of \cite{Coicu24}. Specifically, it is possible to replace \(\{x_1,x_2\}\times(0,t_0]\) with \(\{x_1(t),x_2(t)\}\times(0,t_0]\), where \(x_1\) and \(x_2\) are analytic in \(t\), and \(u(x_j(t),t)\neq v(x_j(t),t)\) for \(0\leq t<t_0\).
\end{rem}

From now on we restrict our attention to mean curvature flow in \(\R^4\) and fix \(p=q=2\).

%% file: 3_singularity_formation.tex
In this section, we consider mean curvature flow with initial data \(M_0=\Sigma_u\) that is mean convex and satisfies hypothesis (H1) or (H2). Assuming that the mean curvature flow \(\{M_t\}_{t\leq t_0}\) does not vanish at the first singular time \(t_0\), we analyse the singularities that form. As a consequence of this analysis, we are able to conclude that the limiting hypersurface at the first singular time is is generated by a continuous graphical profile curve that is smooth away from its zero set.

\subsection{Blow-up analysis}

We start by analysing the possible tangent flows at singularities. Since we do not yet know anything about the regularity of the hypersurface at the singular time, we must first set up some terminology to describe the location of points on the hypersurface without referring to a profile curve. We say that a mean curvature flow \(\{M_t\}_{t<t_0}\) reaches \(x_0\) at time \(t_0\) if there exists a sequence \((x_j,t_j)\) with \(t_j \to t_0\) such that \(x_j\in M_{t_j}\) and \(x_j\to x_0\). Since the level set flow preserves \(SO(p)\times SO(q)\)-symmetry, we can assume without loss of generality that a point \(x_0\) reached by the flow at the first singular time \(t_0\) lies in \(P_0\). We write \(M_{t_0}\) for the limiting hypersurface. 

By mean convexity of the flow, the points of the profile curve that lie on \(P_1\) and \(P_2\) move monotonically with time, so the monotonic convergence theorem can be applied to deduce a limit exists at time \(t_0\). If \(M_t\) avoids \(P_2\) for all time, we say that \(x_0 \in M_{t_0}\) is a \emph{left} or \emph{right \(P_1\)-crossing point} if it is the limit as \(t\to t_0\) of the end-points \(\xi_1(t)\) or \(\xi_2(t)\) respectively. If \(M_t\) crosses \(P_2\), then we will no longer have a left \(P_1\)-crossing point, but will instead have a \emph{\(P_2\)-crossing point}, which we define as the limit as \(t\to t_0\) of \(u(0,t)\). The definition of the right \(P_1\)-crossing point remains the same in this case. The main result of this section is as follows. 

\begin{prop}\label{prop:bubblesheet}
    Suppose that \(M_0 = \Sigma_u\) is mean convex, satisfies hypothesis (H1) or (H2) and the corresponding level set flow \(\{M_t\}_{t>0}\) does not vanish at the first singular time \(t_0\). Suppose further that \(X_0 = (x_0,t_0)\) is a singular point with \(x_0\in P_0\). Then \(x_0\) must lie on \(P_1\) but not at a \(P_1\)-crossing point of \(M_{t_0}\). Moreover, the tangent flow at \(X_0\) is a shrinking \( \R^2\times \Sb^1\), with the \(\Sb^1\) factor parallel to \(P_2\).
\end{prop}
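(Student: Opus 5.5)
Since \(M_0\) is mean convex, Theorem~\ref{thm:white_classification} says every tangent flow at \(X_0\) is a shrinking \(\R^k\times\Sb^{3-k}\) with \(k\in\{0,1,2\}\). The plan is to combine this with the \(SO(2)\times SO(2)\)-symmetry, which any tangent flow at a point of \(P_0\) inherits partially, and rule out every option except \(k=2\) with the \(\Sb^1\) factor parallel to \(P_2\).

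First I would locate \(x_0\). If \(x_0\) lies on neither \(P_1\) nor \(P_2\), its \(SO(2)\times SO(2)\)-orbit is a smooth \(2\)-torus of singular points. Then every tangent flow at \(X_0\) splits off a line along each orbit direction. It therefore contains \(\R^2\) tangent to the torus, so it must be \(\R^2\times\Sb^1\) with the \(\Sb^1\) lying in the normal plane spanned by \(\eb_1,\eb_3\). Smooth convergence of the rescalings would force the profile curve near \(x_0\) to be approximated by a shrinking circle. By Proposition~\ref{prop:graphicality}, though, the profile stays a graph over the \(x\)-axis up to \(t_0\), which a small round circle is not. To make this rigorous I would take the smooth convergence of rescaled flows, restrict to \(P_0\), and note that the rescaled profile near \(x_0\) converges to a circle of radius \(\sqrt{2|t|}\). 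A graph cannot converge to a full circle, since the graph meets each vertical line only once.

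If \(x_0\in P_2\setminus P_1\), the same reasoning applies to the \(\Sb^1\) orbit from the \(\bar y\)-rotation, now with an additional \(SO(2)\) fixing \(x_0\). The possible tangent flows are a sphere, \(\R\times\Sb^2\), or \(\R^2\times\Sb^1\) compatible with these symmetries. A sphere or a neck centred at a \(P_2\)-crossing point would mean the profile near \(x_0\) is a small closed curve, or a pinch along the \(x\)-axis, contradicting graphicality again. Alternatively, the sphere and the \(\Sb^2\)-neck can be ruled out by the non-vanishing assumption together with the graph structure. Concretely, a compact sphere blow-up means the connected component of the flow vanishes, contradicting non-vanishing since \(M_t\) is connected.

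Second, I would handle \(x_0\in P_1\) itself. Here the \(\bar x\)-rotation orbit is a circle in \(P_1\), so the tangent flow contains an \(\R\) factor along that circle. The residual \(SO(2)\) acting on \(\bar y\) fixes \(x_0\), so the remaining factor is either \(\R\times\Sb^1\)-type with \(\Sb^1\) in the \(\bar y\)-plane (giving \(\R^2\times\Sb^1\), \(\Sb^1\parallel P_2\)), or an \(\Sb^2\) through the \(x_1,\bar y\) directions (giving a neck \(\R\times\Sb^2\)). At a right or left \(P_1\)-crossing point, rotating the profile near its endpoint \(\xi_i(t)\) produces a cap-like piece. A tangent flow there would have to be \(\R\times\Sb^2\) or a sphere, and an \(\R\times \Sb^2\) blow-up at the endpoint would need the profile near the endpoint to look like a half-circle shrinking to a point. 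I would rule this out by mean convexity and graphicality: near \(\xi_2(t)\) the graph has \(u_x\to-\infty\), and comparing with the \(\R\times\Sb^2\) profile (an arc centred on \(P_1\)) shows the curve would have to double back. That case would require \(u\) to vanish on both sides, making the component pinch off entirely, which contradicts non-vanishing. At an interior point of \(P_1\) where \(u\to0\) from both sides, an \(\R\times\Sb^2\) neck would again require the profile to contain a full small semicircle about \(x_0\), which fails graphicality. The only remaining option is \(\R^2\times\Sb^1\) with \(\Sb^1\) in the \(\bar y\)-plane, i.e.\ parallel to \(P_2\).

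The main obstacle is the step converting smooth convergence of the tangent flow into a contradiction with graphicality, uniformly in time. It needs care: the graph property holds only for \(t<t_0\), and the rescaled profiles might lose graphicality in the limit without contradiction, since vertical tangents can appear in the limit. To handle this I would use the strict inequality \(\nu\cdot\eb_{3}>0\) from Proposition~\ref{prop:graphicality} and observe that smooth convergence to a round circle or sphere forces \(\nu\cdot\eb_3<0\) somewhere on the rescaled flow at times \(t<t_0\), which is a genuine contradiction.
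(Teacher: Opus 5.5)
Your argument off the axes is sound and differs from the paper's. Instead of Lemma~\ref{lem:away_from_axes}, where blown-up circles and discs put $\ell\times H_2^-$ inside the tangent flow, you split off $\R^2$ using the symmetry and note that a graph over the $x_1$-axis cannot follow a small circle in $P_0$. You should justify the splitting by the symmetry, not merely by the orbit consisting of singular points: the rescaled flows are invariant under $y\mapsto Ry+s_k(Rx_0-x_0)$, with $R$ a rotation by angle $\theta/s_k$, and these maps converge to translations along the orbit. The same reasoning disposes of the necks $\R\times\Sb^2$ on $P_1$ and $P_2$. At $P_1$, though, the real obstruction is that the graph must continue with $u>0$ past an end of the semicircle, since a semicircle is itself a graph.

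The gap is that graphicality cannot detect the bubble-sheet whose Euclidean factor is $P_2$, i.e.\ $\R^2\times\Sb^1$ with the circle in $P_1$. Its slice by $P_0$ is the pair of vertical lines $x_1=\pm\sqrt{-2t}$. A very steep graph approximates these without contradiction, and $\nu\cdot\eb_3\to 0$ there rather than becoming negative, so your final paragraph does not help. This cylinder is compatible with the residual symmetry exactly at the $P_2$-crossing point and at the origin. At the origin there is no translation invariance at all, and both bubble-sheets, with Euclidean factor $P_1$ or $P_2$, are $SO(2)\times SO(2)$-invariant. You list this candidate in the $P_2$ case but never exclude it. You also never treat $x_0=0$, because your $P_1$ paragraph assumes the $\bar x$-orbit is a circle. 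Yet the origin is exactly where the conclusion ``$\Sb^1$ parallel to $P_2$'' is used later in the paper.

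Similarly, at a $P_1$-crossing point you assert that the tangent flow ``would have to be $\R\times\Sb^2$ or a sphere'' because the piece is cap-like. But $\R^2\times\Sb^1$ with $\Sb^1\parallel P_2$ is symmetry-compatible there, and its $P_0$-slice, the horizontal lines $y_1=\pm\sqrt{-2t}$, is compatible with a graph. Excluding it is the whole content of ``not at a crossing point''. What you are missing is the mechanism of Lemma~\ref{lem:away_from_endpoints}. At a crossing point the profile has a known tangent: vertical at $\xi_2(t)$, and horizontal at $x_1=0$ because $u_x(0,t)=0$. Rotating that line puts a full copy of $P_2$ (respectively $P_1$) into the tangent flow, which forces the circle factor into $P_1$ (respectively $P_2$). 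This is then contradicted by Lemma~\ref{lem:disc_containment}, whose flat discs in the interior of $M_{t_0}$ blow up to half-planes $H_1^\pm$ or $H_2^-$ that cannot fit inside a disc factor. At the origin the paper instead uses that $M_t\cap P_2$ is the circle of radius $u(0,t)$, whose rescaled radius stays bounded by the avoidance principle. Hence the tangent flow meets $P_2$ in a circle, and its Euclidean factor cannot be $P_2$. You need some ingredient of this kind, tracking interior sets or the profile's tangent at crossing points; graphicality alone will not supply it.
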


We start by proving the following intermediate result.

\begin{lem}\label{lem:disc_containment}
    Suppose \(M_0 = \Sigma_u\) satisfies the assumptions of Proposition \ref{prop:bubblesheet}, and let \(x_0 = (x^*,0,y^*,0)\in P_0\) be a point reached by the level set flow at the first singular time \(t_0\). Define
    \begin{align*}
        \ell &= \{(\bar{x},\bar{y})\in P_1:x_1 = 0\}, \
        &H_2^- &= \{(\bar{x},\bar{y})\in P_2: y_1 \leq 0\}, \\
        H_1^- &= \{(\bar{x},\bar{y})\in P_1:x_1\leq 0\}, \
        &H_1^+ &= \{(\bar{x},\bar{y})\in P_1:x_1\geq 0\}.
    \end{align*}
    The following statements hold:
    \begin{enumerate}[(a)]
        \item  if \(x^* > 0\) and \(y^*\geq 0\), then the tangent flow at \(X_0\) contains \(\ell\) in its interior;
        \item if \(y^*> 0\) and \(x^*\geq 0\), then the tangent flow at \(X_0\) contains \(H_2^-\) in its interior;
        \item if \(x^* >  0\) and \(y^* = 0\), then
        \begin{enumerate}[(i)]
            \item if \(M_0\) crosses \(P_2\), or if \(M_0\) avoids \(P_2\) and \(x_0\) is not the left \(P_1\)-crossing point of \(M_{t_0}\), then the tangent flow at \(X_0\) contains \(H_1^-\) in its interior;
            \item if \(M_0\) avoids \(P_2\) and \(x_0\) is the left \(P_1\)-crossing point of \(M_{t_0}\), then the tangent flow at \(X_0\) contains \(H_1^+\) in its interior.
        \end{enumerate}
    \end{enumerate}
\end{lem}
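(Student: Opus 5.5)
The plan is to combine the rotational symmetry with the classification in Theorem \ref{thm:white_classification} and with convexity of the limiting shrinker. Throughout, \(K_t\) denotes the compact mean convex region bounded by \(M_t\) for \(t<t_0\). By Proposition \ref{prop:graphicality}, \(K_t=\{(\bar x,\bar y): |\bar x|\in[\xi_1(t),\xi_2(t)],\ |\bar y|\le u(|\bar x|,t)\}\), or the analogous set with \(\xi_1=0\) in case (H2). I take \(X_0\) to be a singular point, which is the case used in Proposition \ref{prop:bubblesheet}. The rescaled regions \(\tilde K^k_t = s_k(K_{s_k^{-2}t+t_0}-x_0)\) then converge to the region \(\tilde K_t\) bounded by the tangent flow, and \(\tilde K_{-1}\) is, up to rotation, a solid round cylinder \(\R^k\times \bar{\B}^{n+1-k}\) whose axis \(\R^k\times\{0\}\) passes through the origin. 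In particular \(\tilde K_{-1}\) is closed and convex, it contains the origin in its interior, and its recession cone is exactly the axis. The key convex-geometry fact is this: if \(\tilde K_{-1}\) contains a ray \(\{\lambda v:\lambda\ge0\}\) from the origin, then \(v\) lies in the axis, and the whole ray lies in the interior. The same holds for any half-plane spanned by such directions. So in each case it suffices to show that the relevant half-plane or line lies in \(\tilde K_{-1}\) (closed containment), and the argument reduces to exhibiting directions in the recession cone.

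\emph{Translation invariance from rotations.} When \(x^*>0\), the \(SO(2)\) action on the \(\bar x\)-plane fixes each \(K_t\). Its orbit through \(x_0\) is a circle of radius \(x^*\) tangent to \(\eb_2\) at \(x_0\). After rescaling by \(s_k\), these circles have radius \(s_kx^*\to\infty\), and the rotations converge to translations in the \(\eb_2\) direction. Hence \(\tilde K_t\) is invariant under translation by \(\eb_2\), so \(\ell=\R\eb_2\) lies in the recession cone. This proves (a) directly. In the same way, \(y^*>0\) gives invariance under translation by \(\eb_4\).

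\emph{Rays from star-shapedness.} For (b), the graphical description shows that \((\bar x,\bar y)\in K_t\) implies \((\bar x,\lambda\bar y)\in K_t\) for \(\lambda\in[0,1]\). Take points \(z_j\in K_{t_j}\) with \(z_j\to x_0\). The segments from \(z_j\) in the direction \(-\eb_3\) of length about \(y^*\) lie in \(K_{t_j}\). Using that the \(K_t\) are nested, these segments pass to the rescaled limit as a full ray \(\{-\lambda\eb_3\}\subset\tilde K_{-1}\). Here I would need to check the time parametrisation via the monotonicity \(K_{t'}\subset K_t\) for \(t'>t\). Together with \(\eb_4\)-invariance, this gives \(H_2^-\). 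For (c), the slice \(K_t\cap P_0\cap\{y_1=0\}\) is the segment \(\{x_1\in[\xi_1(t),\xi_2(t)]\}\). In case (i), the left endpoint converges to \(\lim\xi_1(t)<x^*\), or is \(0<x^*\) when \(M_0\) crosses \(P_2\). So segments of length bounded below extend from near \(x_0\) in the \(-\eb_1\) direction, and blowing up gives the ray \(-\eb_1\). Combined with \(\eb_2\)-invariance, this yields \(H_1^-\). Case (ii) is symmetric, using \(\lim\xi_2(t)>x^*\) and the ray \(+\eb_1\).

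\emph{Main obstacle.} The delicate step is the strict inequality \(\lim\xi_1(t)<\lim\xi_2(t)\) in (c)(ii), and more generally the uniform lower bound on the length of the segments used. If both endpoints converged to \(x^*\), all of \(K_t\) would collapse into \(\{|\bar x|= x^*\}\), a set with empty interior. I would rule this out using non-vanishing: a mean convex level set flow whose region has empty interior at \(t_0\) has \(F_{t_0+h}=\emptyset\) immediately, because \(F_{t+h}\subset\operatorname{int}F_t\). This would contradict the assumption that the flow does not vanish at \(t_0\). A secondary technical point is to justify passing the segments and rays to the limit. For this I would use the local smooth convergence available in the mean convex setting, together with closedness of \(\tilde K_{-1}\).
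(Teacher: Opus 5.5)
Your proof is correct. It uses the same basic mechanism as the paper: a set contained in \(K_{t_0}\subset K_t\) (\(t<t_0\)) that passes through \(x_0\) blows up under \(s_k(\cdot-x_0)\) to a cone lying in the limit region.

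\textbf{How the two routes differ.} The paper exhibits, in each case, a circle or a flat \(2\)-disc in \(K_{t_0}\) with \(x_0\) on it: the orbit circle \(C\), the disc \(D_1\parallel P_2\), and the discs \(D_2,D_3,D_4\subset P_1\). It then notes that the rescaled circle or discs converge to \(\ell\) or to the relevant half-plane. You instead split each half-plane into two pieces:
\begin{enumerate}[(1)]
\item a ray, coming from a segment in \(K_{t_0}\) that ends at \(x_0\);
\item a translation invariance of the blow-up region, coming from the \(SO(2)\)-orbits of \(x_0\), which become lines after rescaling.
\end{enumerate}
The paper's \(D_1\) and \(D_2\) are exactly the orbits of your segments. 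Your segment \([\xi_1(t_0),x^*]\eb_1\) together with \(\eb_2\)-invariance replaces the off-centre discs \(D_3\) and \(D_4\).

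\textbf{What your route gains.} The paper's argument only establishes closed containment (it concludes \(\ell\subset\tilde M_t\)). Your recession-cone argument, using convexity of the models in Theorem~\ref{thm:white_classification}, actually delivers the ``in its interior'' statement. You also justify \(\xi_1(t_0)<\xi_2(t_0)\) in (c)(ii) explicitly via non-vanishing, which the paper uses tacitly when it places \(D_4\).

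\textbf{What it costs.} You need \(X_0\) to be singular in order to invoke White's classification. The paper's closed-containment argument works for any point reached at time \(t_0\). This restriction is harmless: the lemma is only applied at singular points, and the hypotheses of Proposition~\ref{prop:bubblesheet} already include singularity. Moreover, at a regular point the interior claim genuinely fails; for instance in (a), \(\ell\) lies in the tangent plane of the static half-space limit.

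One simplification: in (b) you can take the segment from \(x_0\) itself, since \(x_0\in K_{t_0}\subset K_t\). This avoids approximating by points \(z_j\in K_{t_j}\) and the attendant time-reparametrisation check.
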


\begin{figure}[h]
    \centering
    \includegraphics[width = 0.85\textwidth]{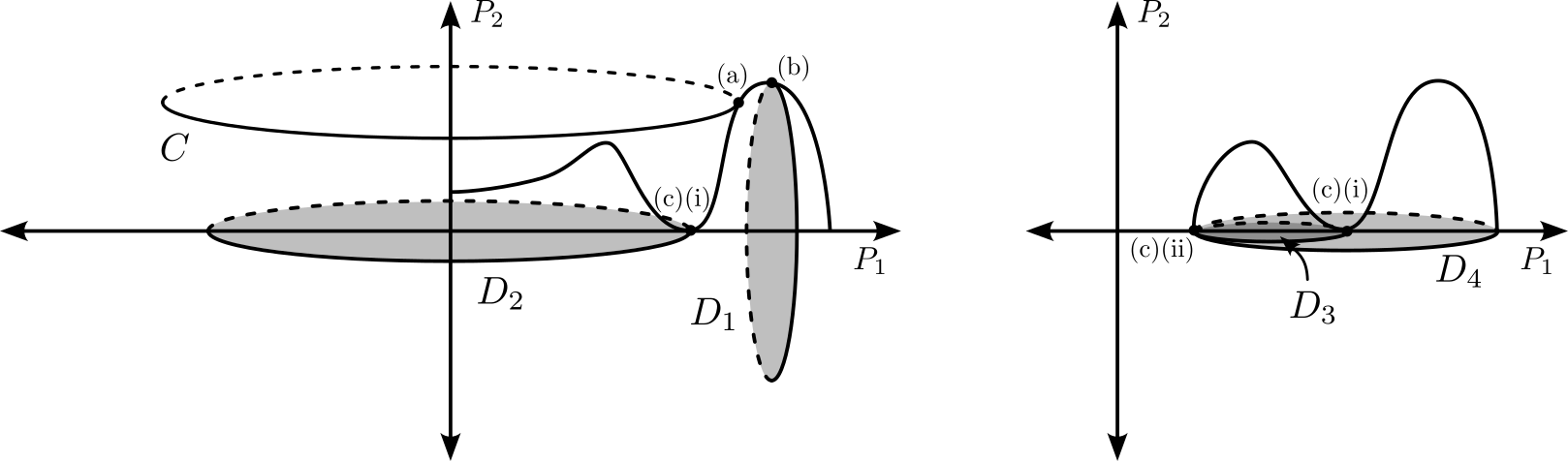}
    \caption{A schematic drawing of the discs used in the proof of Lemma \ref{lem:disc_containment}, horizontal discs should be seen as lying in a plane parallel to \(P_1\) and vertical discs as lying in a plane parallel to \(P_2\)}
    \label{fig:disc_containment}
\end{figure}

\begin{proof}
    The argument in each case follows the same approach. Therefore, we prove (a) in full detail, then point out relevant modifications for the remaining parts.
    \begin{enumerate}[(a)]
        \item By its rotational symmetry around \(P_2\), \(M_{t_0}\) contains a circle \(C\) of radius \(x^*>0\)  centred at \((0,0,y^*,0)\) and parallel to \(P_1\). Let \(\tilde{C}^k = \{s_k(x-x_0):x\in C\}\), then by mean convexity of the flow, it follows that \(\tilde{C}^k\subset \tilde{M}_t^k\) for all \(t< t_0\). Note that \(\tilde{C}^k\) is a circle of radius \(s_kx^*\) with one point passing through the origin and all other points having negative \(x_1\)-coordinate. It follows that \(\tilde{C}^k\to \ell\) in a local smooth sense. As containment is preserved by local smooth convergence, it follows that \(\ell\subset \tilde{M}_t\). 
        \item Since \(M_t\) is generated by a graphical profile curve up to the singular time, it follows that the interior of \(M_{t_0}\) contains the disc \(D_1\) of radius \(y^*>0\) centred at \((x^*,0,0,0)\) and parallel to \(P_2\). The argument follows as before.
        \item 
        \begin{enumerate}[(i)]
            \item If the hypersurface crosses \(P_2\), we apply the same argument as before with the disc \(D_2\) of radius \(x^*>0\) centred at 0 and lying in \(P_1\). If the hypersurface avoids \(P_2\), we apply this argument with the disc \(D_3\) lying in \(P_1\) and touching \(M_{t_0}\) only at the left \(P_1\)-crossing point and at \(x_0\).  
            \item  We apply the same argument as before to the disc \(D_4\) which touches \(M_{t_0}\) at the left and right \(P_1\)-crossing points. \qedhere
        \end{enumerate}
    \end{enumerate}
\end{proof}

Since \(M_0\) is  a connected hypersurface in \(\R^4\), it follows from Theorem \ref{thm:white_classification} that the possible tangent flows at a singular point are a shrinking \(\Sb^3\), \(\R\times \Sb^2\) or \(\R^2\times \Sb^1\). Assuming that the mean curvature flow does not vanish at the first singular time, we can rule out the shrinking \(\Sb^3\) as a possible tangent flow, leaving only the last two options. This classification combined with Lemma \ref{lem:disc_containment} and the \(SO(2)\times SO(2)\) symmetry allows us to rule out the possibility of singularities forming away from \(P_1\) and \(P_2\), or at crossing points (except the origin).

\begin{lem}\label{lem:away_from_axes}
    Suppose \(M_0\) satisfies the assumptions of Proposition \ref{prop:bubblesheet} and \(X_0 = (x_0,t_0)\) is a singular point with \(x_0\in P_0\). Then \(x_0\) must lie on \(P_1\) or \(P_2\).
\end{lem}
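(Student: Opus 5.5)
We argue by contradiction: assume \(x_0=(x^*,0,y^*,0)\) with \(x^*>0\) and \(y^*>0\), and show that no tangent flow at \(X_0\) is compatible with Lemma \ref{lem:disc_containment}. Since \(x^*,y^*>0\), both parts (a) and (b) of that lemma apply. Hence any tangent flow \(\tilde M_t\) at \(X_0\) contains in its interior both the line \(\ell\), which is parallel to \(\eb_2\), and the half-plane \(H_2^-\), which is spanned by \(\eb_4\) and \(-\eb_3\). The same containment holds at every \(t<0\), since the argument in Lemma \ref{lem:disc_containment} gives it for all rescaled times.

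By Theorem \ref{thm:white_classification} and the non-vanishing assumption (which excludes \(\Sb^3\)), \(\tilde M_{-1}\) is a round \(\R\times\Sb^2\) or \(\R^2\times\Sb^1\) with axis \(V\) through the origin, of dimension \(1\) or \(2\). For such a cylinder, the region it bounds is \(V\times\B\) for a ball \(\B\) of fixed radius in \(V^\perp\). Consequently an unbounded line or half-plane through the origin can lie in this region only if its direction set lies in \(V\).

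\begin{itemize}
\item The line \(\ell\) forces \(\eb_2\in V\).
\item The half-plane \(H_2^-\) is unbounded in the directions \(\pm\eb_4\) and \(-\eb_3\), so it forces \(\eb_4\in V\) and \(\eb_3\in V\).
\end{itemize}

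Thus \(V\) must contain \(\mathrm{span}\{\eb_2,\eb_3,\eb_4\}\), so \(\dim V\geq 3\), which contradicts \(\dim V\leq 2\). Therefore no singular point exists off \(P_1\cup P_2\) within \(P_0\). The directions \(\eb_2\) and \(\eb_4\) arise from the rotational symmetry and \(\eb_3\) from graphicality, so each factor is independent of the others.

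The main obstacle is justifying the cylinder-containment step rigorously, namely that an interior-contained half-plane forces its directions into the axis. I would argue this directly. Take a unit vector \(w\) in the half-plane with \(w\notin V\). Then the points \(sw\), for \(s\to\infty\), have distance to \(V\) equal to \(s|\pi_{V^\perp}w|\to\infty\). This eventually exceeds the cylinder radius \(\sqrt{2(n-k)}\), which contradicts containment in the region it bounds.

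One must also confirm that "interior" in Lemma \ref{lem:disc_containment} means the region bounded by the tangent flow. This follows from smooth local convergence of the mean convex rescalings, so the limit of the rescaled interiors is the solid cylinder.
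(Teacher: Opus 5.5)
Your proof is correct and follows the paper's argument: both combine parts (a) and (b) of Lemma \ref{lem:disc_containment} with White's classification, and observe that a solid cylinder whose axis has dimension at most two cannot contain the three independent unbounded directions \(\eb_2,\eb_3,\eb_4\). The paper phrases this as containment of the product \(\ell\times H_2^-\), which implicitly uses convexity of the solid cylinder. Your direction-by-direction escape argument avoids that step and, as a side benefit, also excludes the \(\Sb^3\) case without invoking the non-vanishing hypothesis.
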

\begin{proof}
    Suppose for a contradiction that \(x_0\) is of the form \((x^*,0,y^*,0)\) with \(x^*\neq 0\) and \(y^*\neq 0\). By statements (a) and (b) of Lemma \ref{lem:disc_containment}, the tangent flow \(\tilde{M}_t\) at \(X_0\) must contain \(\ell \subset P_1\) and \(H_2^-\subset P_2\). Since \(P_1\) and \(P_2\) are orthogonal, it follows that \(\tilde{M}_t\) contains \(\ell\times H_2^- = \{(\bar{x},\bar{y})\in \R^4:x_1 = 0,y_1\leq 0\}\) in its interior. However, we also know that \(\tilde{M}_t\) must be a shrinking \(\R\times \Sb^2\) or \(\R^2 \times \Sb^1\), neither of which contain \(\ell\times H_2^-\). This contradicts that \(X_0\) is a singular point. 
\end{proof}

\begin{lem}\label{lem:away_from_endpoints}
    Suppose \(M_0\) satisfies the conditions of Proposition \ref{prop:bubblesheet} and \(X_0 = (x_0,t_0)\) is a singular point with \(x_0\in P_0\). Then \(x_0\) cannot lie at any crossing point of \(M_{t_0}\), except the origin.
\end{lem}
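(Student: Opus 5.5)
The plan is to argue by contradiction, exactly as in Lemma \ref{lem:away_from_axes}. Suppose \(x_0\) is a crossing point of \(M_{t_0}\) other than the origin. By Theorem \ref{thm:white_classification} and the non-vanishing assumption, the tangent flow \(\tilde{M}_t\) at \(X_0\) is a shrinking \(\R\times\Sb^2\) or \(\R^2\times\Sb^1\). We will show that, at a crossing point, Lemma \ref{lem:disc_containment} forces \(\tilde{M}_t\) to contain in its interior a half-space of a \(3\)-dimensional subspace. Neither cylinder can contain such a set, since each has bounded cross-section in at least two orthogonal directions.

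\emph{Right \(P_1\)-crossing point, or left \(P_1\)-crossing point.} Here \(x_0=(x^*,0,0,0)\) with \(x^*>0\) and \(y^*=0\). Part (c) of Lemma \ref{lem:disc_containment} gives that the tangent flow contains a half-plane of \(P_1\), namely \(H_1^-\) in case (i) and \(H_1^+\) in case (ii). It remains to produce a set in the \(P_2\) directions.

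For this I would use the \(SO(2)\) rotation in the \(\bar{y}\)-plane, together with the fact that near the crossing point the region enclosed by \(M_t\) contains vertical discs. Mean convexity and graphicality show that the enclosed region contains all points \((x,0,y,0)\) with \(x\) in a left (respectively right) neighbourhood of the endpoint and \(0\le y\le u(x,t)\). So the interior of \(M_{t_0}\) contains discs of radius \(u(x,t_0)\) parallel to \(P_2\), centred on \(P_1\), at points \(x\) approaching \(x^*\) from one side.

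After rescaling, these rotated pieces converge to at least a half-line in the \(x_1\)-direction times the full \(\bar{y}\)-plane. To make this precise, I would mimic the proof of part (a). Take a slightly smaller profile sub-region, for example a disc in \(P_0\) tangent to the \(x_1\)-axis near \(x_0\) that lies in the interior up to time \(t_0\). Rotate it around \(P_1\) to get a solid region, and blow up. This yields that \(\tilde{M}_t\) contains \(\{x_1\le 0\}\cap P_0'\times\R^2_{\bar y}\) in its interior, or the reflected version. Combined with the rotation in \(\bar{x}\) and the half-plane in \(P_1\) already obtained, \(\tilde{M}_t\) contains a half-space of \(\R^4\) or of a \(3\)-dimensional subspace.

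\emph{\(P_2\)-crossing point.} Here \(x_0=(0,0,y^*,0)\) with \(y^*>0\). Part (b) of Lemma \ref{lem:disc_containment} gives \(H_2^-\). Because \(x^*=0\) and \(u_x(0,t)=0\), the \(\bar{x}\)-rotation produces horizontal discs of radius bounded below, centred on \(P_2\) near \(x_0\). These blow up to all of \(\R^2_{\bar x}\), so \(\tilde{M}_t\) contains \(\R^2\times H_2^-\), which is a half-space. That is impossible for either cylinder.

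\textbf{Main obstacle.} The hard step is the \(P_1\)-crossing case, where the profile degenerates: \(u\to 0\) and \(u_x\to\pm\infty\). A uniform inner disc at \(x_0\) is then not guaranteed, since \(M_{t_0}\) may be non-smooth there. I would first try to get one from the monotonicity of the flow (\(M_{t_0}\subset\) interior of \(M_t\)). If that fails, I would instead derive the contradiction from the orientation of the tangent flow. The \(P_1\) half-plane containment already forces the \(\R\) or \(\R^2\) factor to contain the \(x_2\)-direction, and the \(\bar{y}\)-rotation symmetry then forces the \(\Sb\) factor to be symmetric about the \(x_1\)-axis. Together these contradict containing \(H_1^\pm\) strictly in the interior.
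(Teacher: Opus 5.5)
Your contradiction framework is sound (a half-space of a \(3\)-dimensional subspace indeed cannot lie inside \(\R^2\times D^2\) or \(\R\times D^3\)). The gap is the step that is supposed to produce the transverse set.

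At a \(P_1\)-crossing point \(x_0=(x^*,0,0,0)\), the vertical discs of radius \(u(x,t_0)\) centred at \((x,0,0,0)\) rescale to discs of radius \(s_k\,u(x^*-a/s_k,t_0)\) centred at \(-a\eb_1\). These fill out the \(\bar y\)-plane only if \(u(x^*-h,t_0)/h\to\infty\) as \(h\downarrow 0\), i.e.\ only if the time-\(t_0\) profile already has a vertical tangent at the crossing point. Nothing available at this stage gives you that, and the step is in fact circular. In precisely the scenario you must exclude (tangent flow \(\R^2\times\Sb^1\) with \(\Sb^1\) parallel to \(P_2\)), the argument of Lemma \ref{lem:singularities_are_flat} gives \(u(x,t_0)\le\varepsilon|x-x^*|\) near \(x^*\), so the rescaled discs collapse onto a half-line.

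The same objection applies to an inner disc touching \(x_0\). Monotonicity of the enclosed regions cannot supply one, since it only bounds the time-\(t_0\) region from outside. Your \(P_2\)-crossing case has the same defect: \(u_x(0,t)=0\) holds only for \(t<t_0\), with no uniformity as \(t\to t_0\). The horizontal discs you need would require \(u(x,t_0)\ge y^*-o(x)\) as \(x\downarrow 0\), and that is not available either.

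Your fallback cannot work, for a structural reason. Containing \(H_1^-\) (or \(H_1^+\)) in the interior forces the Euclidean factor to contain both \(\eb_1\) and \(\eb_2\). So the tangent flow would be \(P_1\times\Sb^1\) with \(\Sb^1\subset P_2\), and this cylinder is invariant under the \(\bar y\)-rotations and does contain \(H_1^\pm\) in its interior. It is exactly the tangent flow that occurs at non-crossing singular points on \(P_1\) (Proposition \ref{prop:bubblesheet}). Hence no argument using only part (c) of Lemma \ref{lem:disc_containment} plus the symmetry can single out crossing points.

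The missing ingredient, which is what the paper uses, is information carried over from times \(t<t_0\). At the crossing point \((\xi_2(t),0,0,0)\) the normal is \(\eb_1\), so the profile curve has tangent \(\eb_3\) there. Passing this through the smooth convergence to the tangent flow and rotating around \(P_1\) places a plane parallel to \(P_2\) on the tangent flow. This forces it to be \(\R^2\times\Sb^1\) with \(\R^2=P_2\) and \(\Sb^1\subset P_1\). The interior of that cylinder meets \(P_1\) in a bounded disc, so it cannot contain the half-plane \(H_1^-\) from (c)(i). The left \(P_1\)-crossing case is identical using (c)(ii). The \(P_2\)-crossing case uses the horizontal tangent (normal \(\eb_3\)) and part (b).

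When you write this out, note that you also need the rescaled crossing points to remain in a bounded set, so that the normal information actually survives in the limit.
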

\begin{proof}
    We show this only in the case of a right \(P_1\)-crossing point of \(M_{t_0}\). The proofs in the case of a \(P_2\)-crossing point and a left \(P_1\)-crossing point are analogous (using (b) and (c)(ii) of Lemma \ref{lem:disc_containment} respectively instead of (c)(i)).

    Suppose \(x_0 = (x^*,0,0,0)\) lies at a right \(P_1\)-crossing point (so \(x^*\neq 0\)). Up to the first singular time, the outward normal vector to the surface at \(x_0\) points in the \(\eb_1\) direction, so the tangent vector to the profile curve points in the \(\eb_3\) direction, and this property is preserved by the local smooth convergence when we extract a tangent flow at \(X_0\). Therefore, parabolic rescaling of the profile curve gives a copy of \(\{(\bar{x},\bar{y})\in \R^4: x_1=0\}\) in \(P_0\). By the rotational symmetry around \(P_1\) (which is preserved by the rescaling), rotation of this line gives all of \(P_2\), so if \(X_0\) is singular, then its tangent flow must be a shrinking \(\R^2\times \Sb^1\), with the \(\Sb^1\) factor orthogonal to \(P_2\), i.e. in \(P_1\). However, statement (c)(i) of Lemma \ref{lem:disc_containment} implies that the tangent flow at \(X_0\) contains \(H_1^-\), which cannot be contained in \(\Sb^1\). This is a contradiction.
\end{proof}

We now prove Proposition \ref{prop:bubblesheet}.

\begin{proof}[Proof of Proposition \ref{prop:bubblesheet}]
    Lemmas \ref{lem:away_from_axes} and \ref{lem:away_from_endpoints} immediately imply \(x_0\) lies on \(P_1\), but cannot be a \(P_1\)-crossing point. This proves the first claim. Let us now turn to the second claim, that the tangent flow at \(X_0\) is \(\R^2\times \Sb^1\) with \(\Sb^1\subset P_2\). We split our proof of this claim into the cases \(x_0\neq 0\) and \(x_0 = 0\).

    If \(x_0\neq 0\), then in the plane \(P_2\), the translated flow \(M_t-x_0\) consists of concentric circles centred at 0 of radius \(r(t)\), where \(r(t)\to 0\) as \(t\to t_0\). Consequently, the \(\tilde{M}_t^k\cap P_2\) consists of concentric circles of radius \(s_kr(s_k^{-2}t+t_0)\), still centred at \(0\). By the avoidance principle it follows that
    \begin{align*}
        \dist\left(M_{s_k^{-2}t+t_0},x_0\right) \leq \sqrt{6s_k^{-2}t}.
    \end{align*}
    In particular \(s_kr(s_k^{-2}t+t_0)\leq \sqrt{6t}\), so the rescaled radii are uniformly bounded for each \(t\). It follows that there exits \(r^*(t)\) for which \(s_kr(s_k^{-2}t+t_0)\to r^*(t)\). Therefore, the intersection of the tangent flow with \(P_2\) contains a copy of \(\Sb^1\). By our previous discussion, the tangent flow at \(X_0\) must be a shrinking \(\R\times\Sb^2\) or \(\R^2\times\Sb^1\). Since we are working in the case where \(x^*>0\), and we ruled out \(x_0\) being a left \(P_1\)-crossing point in the first half of the proposition, we can apply statement (c)(i) of Lemma \ref{lem:disc_containment} to conclude that the tangent flow at \(x_0\) contains \(H_1^-\). This cannot be contained in the interior of \(\R\times\Sb^2\), so our tangent flow must be a shrinking \(\R^2\times\Sb^1\).

    If \(x_0=0\), then by the same proof as above, the intersection of our tangent flow with \(P_2\) is a copy of \(\Sb^1\), so the tangent flow at \(X_0\) must either be a shrinking \(\R^2\times\Sb^1\) or \(\R\times\Sb^2\). Assume for a contradiction that it is a shrinking \(\R\times \Sb^2\). Since the intersection of the tangent flow with \(P_2\) contains \(\Sb^1\), the Euclidean factor must be perpendicular to \(P_2\). But then, by the rotational symmetry around \(P_2\) (which is preserved since there is no translation in this case), it follows that \(\R^2\) must be contained in the interior of \(\R\times\Sb^2\), which is a contradiction.
\end{proof}

\subsection{Structure of the limiting hypersurface}

An important consequence of Proposition \ref{prop:bubblesheet} is that it can be used to deduce some information about the structure of the limiting hypersurface at the first singular time \(t_0\). We first note for any \(x\in (\xi_1(t_0),\xi_2(t_0))\), the limit \(u(x,t_0) \coloneqq \lim_{t\to t_0}u(x,t)\) exists due to the mean convexity of the flow. This follows directly from the monotone convergence theorem, since \(u(x,t)\) is monotone decreasing in \(t\) for fixed \(x\). Since the \(SO(2)\times SO(2)\)-invariance is preserved at the singular time, it follows that \(M_{t_0}\) is generated by \(u(\cdot,t_0)\). We now show that the function \(u(\cdot,t_0)\) is `flat' near singularities. An analogous result was established for \(SO(n)\)-invariant flows in \(\R^{n+1}\) \cite[Corollary 3.5]{Altsh95}, and our proof proceeds similarly.

\begin{lem}\label{lem:singularities_are_flat}
    Suppose \(M_0 = \Sigma_u\) satisfies the assumptions of Proposition \ref{prop:bubblesheet}, and let \(\xi\) be such that \(u(\xi,t_0) = 0\), excluding the endpoints \(\xi_1(t_0)\) and \(\xi_2(t_0)\). For every \(\varepsilon>0\) there exists \(\mu_\varepsilon\) such that \(u(x,t_0)\leq \varepsilon |x-\xi|\) for \(|x-\xi|\leq \mu_\varepsilon\). 
\end{lem}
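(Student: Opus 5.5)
We argue by contradiction via a blow-up at the singular point \(X_0=((\xi,0,0,0),t_0)\), following \cite[Corollary 3.5]{Altsh95}. Since \(\xi\) is not an endpoint and \(u(\xi,t_0)=0\), the point \(x_0=(\xi,0,0,0)\) lies on \(P_1\) and is reached by the flow. It must be a singular point, since otherwise the flow would be smooth near \(x_0\) and the profile could not touch \(P_1\) at an interior point without contradicting the graphical structure of Proposition \ref{prop:graphicality}. By Proposition \ref{prop:bubblesheet}, every tangent flow at \(X_0\) is a shrinking \(\R^2\times \Sb^1\) with the \(\Sb^1\) factor parallel to \(P_2\). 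Moreover, mean convexity gives smooth convergence of the rescalings \(\tilde{M}^k_t\) on compact subsets of \(\R^4\times(-\infty,0)\).

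Suppose the claim fails. Then there are \(\varepsilon>0\) and points \(x_k\to\xi\) with \(u(x_k,t_0)>\varepsilon|x_k-\xi|\). Set \(s_k=|x_k-\xi|^{-1}\to\infty\) and consider the rescaled flows \(\tilde{M}^k_t=s_k(M_{s_k^{-2}t+t_0}-x_0)\). Passing to a subsequence, the rescaled points \(s_k(x_k-\xi)\) converge to \(\pm1\). By mean convexity \(u(x,t)\ge u(x,t_0)\) for \(t<t_0\), so the rescaled profile satisfies
\begin{align*}
    \tilde u^k(s_k(x_k-\xi),t)= s_k\,u(x_k,s_k^{-2}t+t_0)\ge s_k u(x_k,t_0)>\varepsilon
\end{align*}
for all \(t<0\). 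Hence, for every \(t<0\), the interior of \(\tilde M^k_t\) contains the disc of radius \(\varepsilon\) in the plane \(\{x_1=s_k(x_k-\xi)\}\cap P_2\)-direction, centred on the \(x_1\)-axis at \(s_k(x_k-\xi)\). This uses that the profile is a graph together with \(SO(2)\) symmetry about \(P_1\), exactly as in Lemma \ref{lem:disc_containment}(b).

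Pass to the limit along a further subsequence to obtain a tangent flow \(\tilde M_t\). The containment is preserved, as in Lemma \ref{lem:disc_containment}, so for every \(t<0\) the closed region bounded by \(\tilde M_t\) contains a disc of radius \(\varepsilon\) in the \(\bar y\)-plane centred at \((\pm1,0,0,0)\). On the other hand, \(\tilde M_t\) is the shrinking cylinder \(\R^2\times \Sb^1_{\sqrt{-2t}}\) with the circle in \(P_2\). Its interior is \(\{|\bar y|<\sqrt{-2t}\}\), whose slices parallel to \(P_2\) are discs of radius \(\sqrt{-2t}\to0\) as \(t\to0^-\). Choosing \(t\) with \(\sqrt{-2t}<\varepsilon\) gives a contradiction, which proves the lemma.

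The main obstacle is making the limiting containment rigorous. One must ensure the disc lies in the interior region at each fixed negative time uniformly in \(k\), and that the tangent flow is a single cylinder centred at the origin, not translated. The second point follows because the rescaling is centred at the singular point \(X_0\) itself. For the first, the clean route is to use the monotone-in-time interior containment at time \(t_0\) and pass the fixed disc to the Hausdorff or smooth limit.
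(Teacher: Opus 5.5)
Your argument is correct and is essentially the paper's proof: both blow up at \(X_0\), use Proposition \ref{prop:bubblesheet} to identify the limit as the cylinder \(\R^2\times\Sb^1_{\sqrt{-2t}}\) with circle in \(P_2\), and use mean convexity in the form \(u(x,t_0)\le u(x,t)\) to carry the vanishing cylinder radius back to time \(t_0\). The difference is only in packaging. The paper uses the full limit \(t\to t_0\), justified by every sequence of rescalings having the same limit, to get \(u(\xi+z\sqrt{t_0-t},t)/\sqrt{t_0-t}\to\sqrt{2}\) uniformly for bounded \(z\), and then chooses \(t\) with \(|x-\xi|=M\sqrt{t_0-t}\); your contradiction argument instead rescales at the spatial scale \(|x_k-\xi|\) and needs only subsequential convergence to the cylinder.
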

\begin{proof}
    Recall that at a singular point \(X_0=(x_0,t_0)\) of mean convex mean curvature flow, any sequence of rescalings \(\tilde{M}_t^{k} = s_k(M_{s_k^{-2}t + t_0}-x_0)\) converges, up to a subsequence, smoothly on any compact sub-interval of \((-\infty,0)\) to a shrinking sphere or cylinder (c.f. Section \ref{sec:tangent_flows}). In Proposition \ref{prop:bubblesheet}, we established that for \(M_0\) satisfying the conditions of Proposition \ref{prop:bubblesheet}, the tangent flow at a singularity can only be a shrinking \(\R^2\times\Sb^1\), with the \(\Sb^1\) factor parallel to \(P_2\). Since this is true for any sequence of rescalings, we obtain that for every fixed \(t'<0\), 
    \begin{align*}
        \lambda (M_{t_0 + \lambda^{-2}t'} - x_0) \to \sqrt{-t'} \left(\R^2\times \Sb^1_{\sqrt{2}}\right),
    \end{align*}
    smoothly as \(\lambda \to \infty\). Choosing \(\lambda = \left(\frac{-t'}{t_0-t}\right)^{1/2}\) (so \(\lambda\to \infty\) as \(t\to t_0\)) it follows that
    \begin{align*}
        \frac{1}{\sqrt{t_0-t}}(M_t - x_0)\to \R^2 \times \Sb^1_{\sqrt{2}}
    \end{align*}
    smoothly as \(t\to t_0\). Moreover, since \(P_0\) intersects \(\R^2 \times \Sb^1_{\sqrt{2}}\) transversely, the local smooth convergence still holds when both sides are intersected with \(P_0^+\coloneqq P_0 \cap \{(\bar{x},\bar{y})\in \R^4 :x_1\geq 0, \ x_3\geq 0\}\). We can write the convergence in terms of \(u\) as
    \begin{align*}
        \left\{\frac{1}{\sqrt{t_0-t}}((x,0,u(x,t),0) - (\xi,0,0,0)):x\in (\xi_1(t_0),\xi_2(t_0))\right\} 
        \to \left\{(\bar{x},\bar{y})\in P_0^+: \ y_1 \equiv \sqrt{2}\right\}.
    \end{align*}
    Performing the change of variables \(z = (x-\xi)/\sqrt{t_0-t}\), it follows that 
    \begin{align}\label{eq:blowup_limit}
        \lim_{t\to t_0} \frac{u(\xi + z\sqrt{t_0-t},t)}{\sqrt{t_0-t}} = \sqrt{2}
    \end{align}
    uniformly in \(z\) on any bounded interval \(|z|\leq M\).
    
    Now fix \(\varepsilon>0\) and let \(M = \frac{2\sqrt{2}}{\varepsilon}\). There exists \(t_{\varepsilon}<t_0\) such that 
    \begin{align*}
        u(\xi + z\sqrt{t_0-t},t) \leq 2\sqrt{2(t_0-t)}
    \end{align*}
    for \(|z|\leq M\), and \(t_\varepsilon < t < t_0\). Let \(\mu_\varepsilon = M \sqrt{t_0-t}\), then any \(x\neq \xi\) with \(|x - \xi|\leq \mu_\varepsilon\) can be written as \(x = \xi\pm M\sqrt{t_0-t}\) for \(t_\varepsilon < t< t_0\). The mean convexity of the flow implies that \(u(\cdot,t)\) moves strictly downwards with time, and so we find that for such \(x\),
    \begin{equation*}
        u(x,t_0) 
        < u(x,t) 
        \leq 2\sqrt{2(t_0-t)} 
        = 2\sqrt{2}|x-\xi|M^{-1}
        = \varepsilon |x-\xi|. 
    \end{equation*}
    If, on the other hand, \(x = \xi\), then the above inequality is trivially true as \(u(\xi,t_0)=0\). 
\end{proof}

Finally, we show that at the singular time our profile curve is the graph of a continuous function that is smooth away from its zero set. 

\begin{lem}\label{lem:continuity}
    Suppose \(M_0= \Sigma_u\) satisfies the assumptions of Proposition \ref{prop:bubblesheet}, then \(u(\cdot,t_0)\) is a continuous function that is smooth away from its zero set \(Z_{t_0} = \{\xi:u(\xi,t_0) = 0 \}\).
\end{lem}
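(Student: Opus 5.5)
The plan is to prove smoothness away from \(Z_{t_0}\) first, and then continuity separately on and off \(Z_{t_0}\).

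\emph{Smoothness away from \(Z_{t_0}\).} Fix \(\xi\in(\xi_1(t_0),\xi_2(t_0))\) with \(u(\xi,t_0)>0\). Then the point \(x_0=(\xi,0,u(\xi,t_0),0)\) is reached by the flow at time \(t_0\) and lies off \(P_1\). By Proposition \ref{prop:bubblesheet} it is therefore a regular point of the flow, and the same holds at points of \(P_2\) if \(M_0\) crosses \(P_2\). Regular points form an open set in spacetime, so there is a parabolic neighbourhood \(B_r(x_0)\times(t_0-r^2,t_0]\) on which the flow is smooth with uniformly bounded curvature and all derivatives. The graphicality argument of Proposition \ref{prop:graphicality}, i.e. the strict positivity of \(\nu\cdot\eb_{3}\) away from \(P_1\), passes to the smooth limit. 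The strong maximum principle applied on this neighbourhood up to time \(t_0\) keeps it strictly positive. Hence, by the implicit function theorem, \(M_{t_0}\cap B_{r/2}(x_0)\) is the graph of a smooth function over an interval around \(\xi\). This function agrees with the pointwise monotone limit \(u(\cdot,t_0)\), because the smooth convergence \(M_t\to M_{t_0}\) near \(x_0\) gives local \(C^\infty\) convergence \(u(\cdot,t)\to u(\cdot,t_0)\) near \(\xi\). The endpoints are handled similarly using Lemma \ref{lem:away_from_endpoints}. There the surface is locally a graph over the \(y_1\)-direction, and the same reasoning gives that \(u(\cdot,t_0)\) is continuous up to \(\xi_i(t_0)\) with value \(0\).

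\emph{Continuity at zeros.} At an interior \(\xi\in Z_{t_0}\), Lemma \ref{lem:singularities_are_flat} gives \(0\le u(x,t_0)\le\varepsilon|x-\xi|\) for \(|x-\xi|\le\mu_\varepsilon\). Hence \(u(x,t_0)\to0=u(\xi,t_0)\) as \(x\to\xi\), which is continuity at \(\xi\). Continuity at points off \(Z_{t_0}\) already follows from the smoothness step.

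\emph{Main obstacle.} The subtle point is justifying that \(M_{t_0}\) near a regular point coincides with the graph of the pointwise limit \(u(\cdot,t_0)\). The concern is that the limit hypersurface could have other pieces, for instance vertical segments arising as limits of steep portions of the graphs, which would break the identification. I would rule this out as follows. By the monotone decrease of \(u\), the set reached at time \(t_0\) above \(\xi\) consists of the single point \((\xi,u(\xi,t_0))\). Any vertical segment in \(M_{t_0}\) would contain points with \(y_1>0\) where \(\nu\cdot\eb_3=0\). Such points are regular by Proposition \ref{prop:bubblesheet}, and this contradicts the strict positivity of \(\nu\cdot\eb_3\) obtained in the smoothness step. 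If this fails, a fallback is to use mean convexity. Since the interiors of the \(M_t\) are nested, \(\textnormal{int}(M_{t_0})=\bigcap_t \textnormal{int}(M_t)\) is exactly the region \(\{|\bar y|<u(|\bar x|,t_0)\}\). This forces \(M_{t_0}\), as the boundary of that region, to be the graph \(\Sigma_{u(\cdot,t_0)}\).
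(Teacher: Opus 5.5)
Your proposal is correct and follows essentially the same route as the paper: regularity off \(P_1\) from Proposition \ref{prop:bubblesheet}, strict positivity of \(\nu\cdot\eb_3\) via the strong maximum principle together with the implicit function theorem for smoothness, a graph over the \(y_1\)-axis at the end-points, and Lemma \ref{lem:singularities_are_flat} at interior zeros. Your identification of the local graph with \(u(\cdot,t_0)\), via local smooth convergence of \(u(\cdot,t)\), is more explicit than the paper's appeal to uniqueness and already settles the matter, so the ``main obstacle'' paragraph is unnecessary; this is just as well, since monotonicity alone does not show that the reached set over \(\xi\) is a single point, and the hypograph fallback presupposes the continuity you are trying to prove.
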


\begin{proof}
    We first show smoothness. We know from Proposition \ref{prop:bubblesheet} that \(M_{t_0}\) is smooth away from \(P_1\), so we can apply the strong maximum principle to \(\nu\cdot \eb_3\) to conclude that the corresponding profile curve has no vertical tangents, as in the proof of Proposition \ref{prop:graphicality}. Away from \(P_1\), this profile curve can be represented as a smooth graph over the \(x_1\)-axis by the implicit function theorem, which must correspond to the graph of \(u(\cdot, t_0)\) by uniqueness. 

    Continuity at the end-points follows from the fact that, in a neighbourhood of each end-point, the profile curve admits a smooth graphical representation over the \(y_1\)-axis. The image of this graph must agree with the representation of the curve as a graph over the \(x_1\)-axis in some overlapping region, and therefore determines the same limit at the end-point, ensuring continuity there. Finally, for any point \(\xi\in Z_{t_0}\) that is not an end-point, it follows from Lemma \ref{lem:singularities_are_flat} that \(\lim_{x\to\xi} u(x,t_0) = 0\). Since \(u(\xi,t_0) = 0\), continuity follows.
\end{proof}

%% file: 4_isolated_pinching.tex
In this section we prove statement (1) of Theorem \ref{thm:main_theorem}. That is, we show that there exists a smooth, mean convex embedding \(M_0\cong\Sb^3\) in \(\R^4\) whose evolution under the mean curvature flow forms an isolated bubblesheet singularity at the first singular time. We do so by first finding conditions on the initial profile curve under which the flow is mean convex and non-vanishing, which will allow us to apply the results from Section \ref{sec:singularity_formation}. We then use this result coupled with an application of the Sturmian theorem to show that under further conditions on the initial profile curve, an isolated singularity must form at the origin. We end with an explicit construction of a suitable profile curve. 

We begin by showing that given a sufficiently regular function \(u\), one can easily modify the graph of \(u\) so that \(\Sigma_u\) is mean convex. This greatly simplifies the construction of our example as it removes the need to specify an explicit profile curve and calculate its principal curvatures. Recall from \eqref{eq:principal_curvatures} that the principal curvatures of \(\Sigma_u\) are entirely determined by \(u\) and its derivatives, and, moreover \(\kappa_2\) is always positive. The idea of the following result is that if \(\Sigma_u\) is not already mean convex, then stretching the profile curve along the \(x_1\)-axis with large enough scale factor forces \(\kappa_2\) to dominate any negative curvatures. 

\begin{lem}\label{lem:mean_convexity}
    Suppose that \(M_0 = \Sigma_u\) satisfies hypothesis (H2) and assume that there exists a point \(0<p<\xi_2\) such that \(u_x, \ u_{xx} < 0\) for \(x\geq p\). Then there exists \(\lambda\geq 1\) such that \(u_\lambda: y\mapsto u(y/\lambda)\) generates a mean convex hypersurface of rotation \(\Sigma_{u_\lambda}\).
\end{lem}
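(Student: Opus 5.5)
We compute the principal curvatures of \(\Sigma_{u_\lambda}\) from \eqref{eq:principal_curvatures} with \(p=q=2\). Writing \(x = y/\lambda\), we have \(u_\lambda(y)=u(x)\), \((u_\lambda)_y = \lambda^{-1}u_x\) and \((u_\lambda)_{yy}=\lambda^{-2}u_{xx}\). Hence
\begin{align*}
    H_\lambda = \frac{-\lambda^{-2}u_{xx}}{(1+\lambda^{-2}u_x^2)^{3/2}} + \frac{1}{(1+\lambda^{-2}u_x^2)^{1/2}u} + \frac{-\lambda^{-1}u_x}{(1+\lambda^{-2}u_x^2)^{1/2}\,\lambda x}.
\end{align*}
Multiplying by \((1+\lambda^{-2}u_x^2)^{1/2}>0\), mean convexity is equivalent to positivity of
\begin{align*}
    G_\lambda(x) \coloneqq \frac{-u_{xx}}{\lambda^2+u_x^2} + \frac{1}{u} - \frac{u_x}{\lambda^2 x}, \qquad x\in(0,\xi_2).
\end{align*}
Here we used the identity \(\lambda^{-2}(1+\lambda^{-2}u_x^2)^{-1} = (\lambda^2+u_x^2)^{-1}\). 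The plan is to split \((0,\xi_2)\) into \([p,\xi_2)\) and \((0,p]\), and to handle the endpoints \(x=0\) and \(x=\xi_2\) via smoothness of \(\Sigma_u\).

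On \([p,\xi_2)\) the hypotheses \(u_x<0\) and \(u_{xx}<0\) make every term of \(G_\lambda\) strictly positive for every \(\lambda\geq 1\): indeed \(-u_{xx}>0\), \(1/u>0\) and \(-u_x/x>0\). So there is nothing to do there. This is exactly why the hypothesis is imposed: near \(\xi_2\) we have \(u\to 0\) and \(u_x\to-\infty\), so no uniform bound is available, but sign information suffices. The tip \(x=\xi_2\) itself lies on \(P_1\), where \(\Sigma_u\) is smooth and the curvatures are limits of positive quantities. To be safe I would note that, at the tip, \(\Sigma_{u_\lambda}\) is a smooth hypersurface whose mean curvature is the continuous extension of \(H_\lambda\). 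That extension is \(\geq 0\), and it is strictly positive, since near the tip the profile is a concave graph over the \(y_1\)-axis bending toward \(P_1\). One could alternatively invoke continuity plus compactness in that chart.

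On the compact set \([0,p]\), we have \(u\geq c>0\) by (H2), and \(u\) is smooth up to \(x=0\) (crossing \(P_2\) smoothly). The term \(1/u\) is therefore bounded below by \(1/\max u>0\). The term \(|u_{xx}|/(\lambda^2+u_x^2)\leq \|u_{xx}\|_{C^0[0,p]}\lambda^{-2}\) is small. For the last term, the hardest point is \(x\to 0\), where \(u_x/x\) is singular-looking. Here smoothness of \(\Sigma_u\) across \(P_2\) gives \(u_x(0)=0\), and \(u\) extends evenly. Hence \(u_x(x)/x\to u_{xx}(0)\), and \(|u_x/x|\leq \|u_{xx}\|_{C^0[0,p]}\) on \((0,p]\) by the mean value theorem. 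So
\begin{align*}
    G_\lambda \geq \frac{1}{\max u} - \frac{2\|u_{xx}\|_{C^0[0,p]}}{\lambda^2} > 0
\end{align*}
for \(\lambda\) large. At \(x=0\) itself, \(H_\lambda\) is the continuous limit of the same expression, so positivity persists.

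The main obstacle is ensuring uniformity near the two singular endpoints of the parametrisation. At \(x=0\) this is handled by the mean value theorem bound on \(u_x/x\). At \(x=\xi_2\) the given sign hypothesis avoids any need for quantitative estimates. Finally, \(u_\lambda\) still satisfies \eqref{eq:crossing_P2_smoothness} on \([0,\lambda\xi_2]\), since the conditions are invariant under horizontal stretching. Therefore \(\Sigma_{u_\lambda}\) satisfies (H2) and is mean convex.
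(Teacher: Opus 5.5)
Your proof is correct and follows essentially the same route as the paper. You split the profile into \([p,\xi_2)\), where the sign hypotheses make every term positive for all \(\lambda\), and \([0,p]\), where uniform bounds on \(u\), \(u_{xx}\) and \(u_x/x\) (the last justified via \(u_x(0)=0\) and the mean value theorem) make the two possibly negative terms \(O(\lambda^{-2})\) against a \(\lambda\)-independent positive lower bound from \(1/u\).

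One quibble concerns the tip \(x=\xi_2\), which the paper does not treat explicitly. Concavity alone does not give strict positivity there, since the meridian curvature may vanish. However, the curvature from rotation about \(P_2\) tends to \(1/(\lambda\xi_2)>0\) while the other terms have non-negative limits, so \(H_\lambda>0\) at the tip as well.
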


\begin{proof}
    For \(\lambda \geq 1\), consider the stretched hypersurface of revolution \(\Sigma_{u_\lambda}\), whose profile curve is the graph of \(u_{\lambda}\), for \(y\in [0,\lambda p]\). Using \eqref{eq:principal_curvatures}, the principal curvatures of \(\Sigma_{u_\lambda}\) scale as 
    \begin{align*}
        \kappa_1 = - \frac{u_{xx}(\frac{y}{\lambda})/\lambda^2}{\left(1 + |u_x(\frac{y}{\lambda})|^2/\lambda^2\right)^\frac{3}{2}}, &&
        \kappa_2 = \frac{1}{\left(1+|u_x(\frac{y}{\lambda})|^2/\lambda^2\right)^\frac{1}{2}u(\frac{y}{\lambda})}, &&
        \kappa_3 = -\frac{u_{x}}{\left(1 + |u_x(\frac{y}{\lambda})|^2/\lambda^2\right)^\frac{1}{2}y}.
    \end{align*}
    If \(y\in [\lambda p, \ \lambda \xi_2]\), the principal curvatures at points in the corresponding portion of \(\Sigma_{u_\lambda}\) are all strictly positive by assumption, so we have convexity in this region for any \(\lambda\). 
    
    If \(y\in[0,\lambda p]\), then the smoothness of \(\Sigma_u\) (in particular conditions \eqref{eq:crossing_P2_smoothness}) imply that there exists \(C\) such that 
    \begin{align}\label{eq:derivative_assumptions}
        \sup_{x\in [0, \ p]} \left(|u| + |u_x| + \frac{|u_x|}{|x|}+ |u_{xx}|\right) \leq C,
    \end{align}
    which yields the lower bounds
    \begin{align*}
        \kappa_1 \geq -\frac{C}{\lambda^2}, &&
        \kappa_2 \geq \frac{1}{(1+C^2)^\frac{1}{2}C}, &&
        \kappa_3 \geq -\frac{C}{\lambda^2}.
    \end{align*}
    For any \(C\), we can find \(\lambda\geq 1\) large enough that \(\kappa_1+\kappa_2+\kappa_3 > 0\), and so \(\Sigma_{u_\lambda}\) is mean convex.
\end{proof}

Next, we find inner and outer barriers that ensure that that the mean curvature flow does not vanish at the first singular time, provided the initial data \(M_0\) lies between these barriers. For the outer barrier, we make use of the following hyperboloid comparison principle, which is similar to Proposition 3.7 in \cite{Ecker04}. The key difference being that Ecker seeks to compare the mean curvature flow with an \(SO(n)\)-invariant hyperboloid, whereas we do so with an \(SO(2)\times SO(2)\)-invariant hyperboloid. The proofs are almost identical, so we offer a brief sketch here, and note that more detailed calculations can be found in Chapter 3 of \cite{Ecker04}. 

\begin{lem}\label{lem:hyperboloid_comparison}
    Let \(\{M_t\}_{t>0}\) be a solution of mean curvature flow in \(\R^4\). If for some \(2/3\leq \beta \leq 1\) and some \(\varepsilon \geq 0\) the initial hypersurface \(M_0\) satisfies
    \begin{align*}
        M_0 \subset \{(\bar{x},\bar{y})\in \R^4: (1-\beta)(x_1^2 + x_2^2) \geq y_1^2 + y_2^2 - \varepsilon^2\}
    \end{align*}
    then
    \begin{align*}
        M_t \subset \{(\bar{x},\bar{y})\in \R^4: (1-\beta)(x_1^2 + x_2^2) \geq y_1^2 + y_2^2 + \beta t - \varepsilon^2\}
    \end{align*}
    as long as the solution stays smooth. If in addition \(\varepsilon > 0\) and \(M_0 = \Sigma_u\) satisfies hypothesis (H2), then a singularity has to form at the latest time of \(\varepsilon^2/\beta\).
\end{lem}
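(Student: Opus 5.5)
Set \(\phi(\bar{x},\bar{y},t) = (1-\beta)|\bar{x}|^2 - |\bar{y}|^2 - \beta t + \varepsilon^2\), writing \(|\bar{x}|^2 = x_1^2+x_2^2\) and \(|\bar{y}|^2 = y_1^2+y_2^2\). The claim is that \(\phi\geq 0\) on \(M_t\) persists in time. Following Ecker, the first step is to compute the evolution of \(\phi\) restricted to the flow. Along mean curvature flow we have \((\pa_t-\Delta)|x|^2 = -2n\) for the full ambient square. For partial coordinate squares we have instead \((\pa_t - \Delta)x_i^2 = -2|\nabla x_i|^2\), where \(|\nabla x_i|^2 = 1 - (\nu\cdot \eb_i)^2\). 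Hence
\begin{align*}
(\pa_t-\Delta)\phi = -2(1-\beta)\big(2 - |\nu_{\bar{x}}|^2\big) + 2\big(2 - |\nu_{\bar{y}}|^2\big) - \beta,
\end{align*}
where \(\nu_{\bar{x}},\nu_{\bar{y}}\) denote the projections of the unit normal onto the two factors, so that \(|\nu_{\bar{x}}|^2 + |\nu_{\bar{y}}|^2 = 1\). Writing \(a = |\nu_{\bar{x}}|^2\in[0,1]\), the right-hand side is affine in \(a\), so it suffices to check nonnegativity at the endpoints:
\begin{align*}
a=0:&\quad -4(1-\beta) + 2 - \beta = 3\beta - 2 \geq 0,\\
a=1:&\quad -2(1-\beta) + 4 - \beta = 2 + \beta \geq 0.
\end{align*}
The first condition is exactly where \(\beta\geq 2/3\) enters, and \(\beta\leq 1\) keeps the cone coefficient \(1-\beta\) nonnegative. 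So \((\pa_t-\Delta)\phi\geq 0\), i.e. \(\phi\) is a supersolution.

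Applying the weak maximum principle, the minimum of \(\phi\) over \(M_t\) is nondecreasing, so \(\phi\geq 0\) is preserved while the flow is smooth. Two points need care. If \(M_t\) is compact, the argument is the standard one. For a noncompact solution I would assume the usual growth conditions, or simply note that this lemma is applied to the compact (H2) flows. The obstacle I expect is the extra term \(2(1-\beta)|\nu_{\bar{x}}|^2 - 2|\nu_{\bar{y}}|^2\). With the inequality written as \(\phi\geq 0\) it carries the favourable sign, but if I have the sign convention reversed I would instead work with \(-\phi\) and the outer region, and I would recheck this against Ecker's Proposition 3.7.

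For the final assertion, suppose \(M_0=\Sigma_u\) satisfies (H2) and \(\varepsilon>0\). Then \(M_t\) crosses \(P_2\) at \(\bar{x}=0\), at the point with \(|\bar{y}| = u(0,t)\geq 0\), as long as the flow is smooth, by Proposition \ref{prop:graphicality}. Evaluating the preserved inequality at this point gives
\begin{align*}
0 \geq u(0,t)^2 + \beta t - \varepsilon^2,
\end{align*}
so \(u(0,t)^2\leq \varepsilon^2-\beta t\). This is impossible for \(t>\varepsilon^2/\beta\), so the smooth flow cannot persist beyond \(\varepsilon^2/\beta\) and a singularity forms by that time.
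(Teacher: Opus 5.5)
Your argument is correct. The first part is the paper's proof up to sign. The paper works with \(f=|\bar y|^2-(1-\beta)|\bar x|^2+\beta t=\varepsilon^2-\phi\) and shows \((\partial_t-\Delta)f\le 2-3\beta\) by bounding \(|\nabla x_1|^2+|\nabla x_2|^2\le 2\), which is exactly your endpoint case \(|\nu_{\bar x}|^2=0\). Your signs are right, so the hedge about switching to \(-\phi\) is unnecessary. Strictly, the term \(2(1-\beta)|\nu_{\bar x}|^2-2|\nu_{\bar y}|^2\) is unfavourable when \(\nu\) lies in the \(\bar y\)-factor, and that case is precisely what forces \(\beta\ge 2/3\); your affine endpoint check already accounts for it.

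The second part takes a different route. The paper argues topologically: if the flow were still smooth at time \(\varepsilon^2/\beta\), the barrier region would have degenerated to a cone with vertex at the origin, and a smooth \(\Sb^3\) that still crosses \(P_2\) cannot lie inside it. You instead evaluate the preserved inequality at the \(P_2\)-crossing point \((0,0,u(0,t),0)\) furnished by Proposition \ref{prop:graphicality}. This gives \(u(0,t)^2\le\varepsilon^2-\beta t\), which is impossible for \(t>\varepsilon^2/\beta\). Both arguments rely on the crossing of \(P_2\) persisting up to the first singular time, and both give \(t_0\le\varepsilon^2/\beta\). Yours is more concrete. By working at times where the barrier region misses \(P_2\) altogether, you sidestep a step the paper leaves implicit: that a smooth hypersurface meeting \(P_2\) inside the cone must pass through the vertex and cannot be smooth there. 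Yours also yields the quantitative bound \(u(0,t)\le\sqrt{\varepsilon^2-\beta t}\) on the crossing height for free.
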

\begin{proof}
    Consider \(f(\bar{x},\bar{y},t) = y_1^2 + y_2^2 - (1-\beta)(x_1^2 + x_2^2) + \beta t\) for some \(0\leq \beta \leq 1\), then
    \begin{align*}
        \left(\frac{d}{dt} - \Delta_{M_t}\right)f(\bar{x},\bar{y},t)
        &= \left(\frac{d}{dt} - \Delta_{M_t}\right)\left(|(\bar{x},\bar{y})|^2 - x_1^2 - x_2^2 - (1-\beta)(x_1^2 + x_2^2) + \beta t\right)\\
        &= -6 + 2(2-\beta)(|\nabla^{M_t}x_1|^2 + |\nabla^{M_t}x_2|^2) + \beta
        \leq 2-3\beta,
    \end{align*}
    and this is non-positive when \(\beta \geq 2/3\). The first claim now follows from the weak maximum principle. For the second claim, note that since the initial surface is diffeomorphic to \(\Sb^3\) and topological changes can only occur at singular times, it follows that \(M_t\cong \Sb^3\) up to the first singular time. If no singularity has formed before time \(\varepsilon^2/\beta\), then \(M_t\) must still cross \(P_2\) at this time, but is also forced by the barrier to lie inside a cone with vertex at the origin, which is not possible if \(M_t\cong \Sb^3\). Therefore a singularity has to have formed before this time. 
\end{proof}

For the inner barrier we use a shrinking sphere. The explicit inner and outer barriers that guarantee a non-vanishing flow are given by the following proposition.  

\begin{prop}\label{prop:non_vanishing}
    Given \(\varepsilon ,\rho > 0\), \(\beta < 1\), \(p > \rho\), define \(g,h: \R_{\geq 0}\to \R_{\geq 0}\) by 
    \begin{align*}
        g(x) \coloneqq \sqrt{\rho^2 - (x-p)^2} 
        \quad \textnormal{and} \quad
        h(x) \coloneqq \sqrt{(1-\beta)x^2 + \varepsilon^2}.
    \end{align*} 
    Fix \(2/3\beta<1\) and suppose that \(M_0 = \Sigma_{u}\) satisfies hypothesis (H2) and \(u\) is such that \(g < u < h\). If \(\rho\) and \(\varepsilon\) are such that \(\rho^2 > 6\varepsilon^2/\beta\), then the mean curvature flow \(\{M_t\}_{t>0}\) does not vanish at the first singular time. 
\end{prop}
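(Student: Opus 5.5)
The plan is to squeeze the first singular time between two estimates coming from the two barriers. The outer barrier, via Lemma \ref{lem:hyperboloid_comparison}, forces a singularity no later than \(\varepsilon^2/\beta\). The inner barrier, a shrinking sphere, keeps a ball inside the flow until a time strictly later than \(\varepsilon^2/\beta\). Hence at the first singular time \(t_0\) the level set flow still encloses a nonempty open set, and in particular has not vanished.

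\emph{Outer barrier.} The condition \(u<h\) says that every point \((\bar x,\bar y)\) of \(M_0=\Sigma_u\) satisfies \(|\bar y| = u(|\bar x|) < \sqrt{(1-\beta)|\bar x|^2+\varepsilon^2}\). This is exactly the containment \(M_0\subset\{(1-\beta)(x_1^2+x_2^2)\geq y_1^2+y_2^2-\varepsilon^2\}\) required in Lemma \ref{lem:hyperboloid_comparison}. Since \(2/3\leq\beta<1\), \(\varepsilon>0\) and \(M_0\) satisfies (H2), that lemma gives \(t_0\leq \varepsilon^2/\beta\).

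\emph{Inner barrier.} The condition \(g<u\) on \([p-\rho,p+\rho]\) says the disc of radius \(\rho\) centred at \((p,0,0,0)\) in \(P_0\cap\{y_1\geq 0\}\), or rather its rotation, lies in the region enclosed by \(\Sigma_u\). More usefully, the full 3-ball \(B\) of radius \(\rho\) centred at \((p,0,0,0)\) in \(\R^4\) lies in the interior region \(K\) with \(\pa K = M_0\). To see this, take a point \((\bar x,\bar y)\in B\). We have \(|\bar x|\in(p-\rho,p+\rho)\), using \(p>\rho\) so that \(p-\rho>0\), and \((|\bar x|-p)^2+|\bar y|^2\leq |\bar x-p\eb_1|^2+|\bar y|^2<\rho^2\). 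Therefore \(|\bar y|<g(|\bar x|)<u(|\bar x|)\), and the point lies inside \(K\).

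The sphere \(\pa B\) evolves by mean curvature flow as a sphere of radius \(\sqrt{\rho^2-6t}\), since \(n=3\), and it exists until time \(\rho^2/6\). By the avoidance property (2) of the level set flow, applied to \(\pa B\) and \(\R^4\setminus \operatorname{int}(K)\), and using mean convexity, the evolving ball stays inside \(F_t(K)\) for \(t<\rho^2/6\). The hypothesis \(\rho^2>6\varepsilon^2/\beta\) gives \(\rho^2/6>\varepsilon^2/\beta\geq t_0\). So at time \(t_0\), the set \(F_{t_0}(K)\) contains a ball of positive radius \(\sqrt{\rho^2-6t_0}\). By non-fattening, \(F_{t_0}(M_0)=\pa F_{t_0}(K)\neq\emptyset\), so the flow does not vanish at \(t_0\).

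\emph{Main obstacle.} The step needing most care is the application of Lemma \ref{lem:hyperboloid_comparison}. It is stated for the smooth flow, and its conclusion that a singularity forms by \(\varepsilon^2/\beta\) relies on the \(\Sb^3\) topology up to the first singular time. All we need is the bound \(t_0\leq\varepsilon^2/\beta\), which is exactly what the lemma gives.

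A secondary point is that "non-vanishing" must be read for the level set flow at \(t_0\), i.e.\ as \(M_{t_0}\neq\emptyset\). This is ensured because the inner sphere is disjoint from \(M_t\) for all \(t\leq t_0\), while \(M_t\) separates it from infinity. Consequently the limiting hypersurface still encloses the shrunken ball.
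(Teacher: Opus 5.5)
Your proof is correct and follows the paper's argument. Lemma \ref{lem:hyperboloid_comparison} forces \(t_0\le\varepsilon^2/\beta\), while the inner ball of radius \(\rho\) survives until \(\rho^2/6>\varepsilon^2/\beta\), so the flow still encloses a ball at \(t_0\); the paper instead derives a contradiction at time \(\varepsilon^2/\beta\), which is equivalent because the flow is nested, and your one slip is calling \(B\) a 3-ball when it is a 4-ball in \(\R^4\) bounded by the 3-sphere of radius \(\sqrt{\rho^2-6t}\).
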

\begin{proof}
    First note that the condition \(g<u<h\) implies that upon rotation
    \begin{align*}
        B_\rho((p\cos\psi,p\sin\psi,0,0))
        \subset M_0
        \subset \{(\bar{x},\bar{y})\in \R^4:(1-\beta)(x_1^2 + x_2^2) \geq y_1^2 + y_2^2 - \varepsilon^2\},
    \end{align*}
    for any \(\psi \in [0,2\pi)\). Therefore, \(M_0\) satisfies the assumptions of Lemma \ref{lem:hyperboloid_comparison}, and a singularity forms at the latest time of \(\varepsilon^2/\beta\). Now, take any \(\rho^2 > {6\varepsilon^2}/{\beta}\) and suppose that \(M_t\) has vanished at time \({\varepsilon^2}/{\beta}\). Since \(B_\rho((p\cos\psi,p\sin\psi,0,0))\subset M_0\), it follows from the avoidance principle that \(B_{\sqrt{\rho^2-6\varepsilon^2/\beta}}((p\cos\psi,p\sin\psi,0,0)) \subset M_{\varepsilon^2/\beta}\),
    and the radius of the ball on the left hand side is strictly positive by our choice of \(\rho\). However, by assumption, \(M_{\varepsilon^2/\beta}\) has vanished, and therefore cannot contain a 4-ball in its interior. So we have a contradiction.
\end{proof}

Next, we show using the Sturmian theorem detailed in Section \ref{sec:so_2_so_2_invariant_flow}, that if a profile curve crossing \(P_2\) starts with exactly one global maximum and one local minimum at \(x=0\) and no other critical points, then this structure is preserved under the mean curvature flow. 

\begin{prop}\label{prop:singularity_at_origin}
    Suppose that \(M_0 = \Sigma_u\) is mean convex and that there exists \(0<p<\xi_2\), such that \(u_{x}(\cdot,0)>0\) on \((0,p)\) and  \(u_{x}(\cdot,0)<0\) on \((p,\xi_2]\), and \(u_x(0,0) = u_x(p,0) = 0\). Then there exists \(\varepsilon_0\) such that if \(M_0\) satisfies the assumptions of Proposition \ref{prop:non_vanishing} with \(\varepsilon<\varepsilon_0\), then for all \(t<t_0\), there exists \(p(t)\) such that \(u_{x}(\cdot,t)>0\) on \((0,p(t))\) and \(u_x(\cdot,0) < 0\) on \((p(t),\xi_2]\).
\end{prop}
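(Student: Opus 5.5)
We will control the sign changes of \(u_x(\cdot,t)\) on \((0,\xi_2(t))\) with the Sturmian theorem (Theorem \ref{thm:sturmian_theorem}). Differentiating \eqref{eq:evolution_equation} (with \(p=q=2\)) in \(x\) shows that \(w=u_x\) satisfies a linear parabolic equation
\begin{align*}
    w_t = \frac{w_{xx}}{1+w^2} - \frac{2ww_x^2}{(1+w^2)^2} + \frac{w}{u^2} + \frac{w_x}{x} - \frac{w}{x^2},
\end{align*}
whose coefficients are smooth on the interior of the domain for \(t<t_0\) by Proposition \ref{prop:graphicality}. We apply the zero-counting version of Theorem \ref{thm:sturmian_theorem} with \(v\equiv 0\), which is a solution. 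Initially \(w(\cdot,0)\) has exactly one interior zero, at \(p\), and it is simple because \(u_{xx}(p,0)<0\) (if it is only a degenerate zero, we count it with multiplicity, and the argument is unchanged).

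The boundary needs care, since \(w\) vanishes at \(x=0\) and tends to \(-\infty\) at \(\xi_2(t)\). Near the right endpoint, the curve stays strictly convex and decreasing. This follows from local graphicality over the \(y_1\)-axis together with a maximum principle argument near the \(P_1\)-crossing, and it lets us pick an analytic curve \(x_2(t)\) close to \(\xi_2(t)\) with \(w(x_2(t),t)<0\) for all \(t<t_0\). Near \(x=0\), smoothness of \(M_t\) across \(P_2\) makes \(u(\cdot,t)\) an even smooth function, so \(w\) is odd. We will use this to show that \(w(x,t)/x\) extends smoothly to \(x=0\) and satisfies a parabolic equation there.

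The key point is that \(u(\cdot,t)\) keeps a strict local minimum at \(0\), i.e. \(u_{xx}(0,t)>0\) for all \(t<t_0\). This is where we need \(\varepsilon<\varepsilon_0\) and the barriers of Proposition \ref{prop:non_vanishing}. The outer hyperboloid barrier of Lemma \ref{lem:hyperboloid_comparison} forces \(u(0,t)\le\sqrt{\varepsilon^2-\beta t}\), which is small. The inner shrinking balls keep \(u\) of size at least \(\sqrt{\rho^2-6t}\gg\varepsilon\) near \(x=p\). Together these mean \(u\) cannot become monotone decreasing. We will argue as follows. Suppose \(w(\cdot,t)\) develops a new zero near \(0\), or \(u_{xx}(0,t)\) becomes nonpositive. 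In the second case, odd symmetry makes the zero of \(w\) at \(0\) of order at least \(3\), and Theorem \ref{thm:sturmian_theorem} (applied on \((-x_2(t),x_2(t))\) to the odd extension) says the total count of zeros must then drop strictly. On the symmetric interval the initial count is \(3\) (namely \(-p,0,p\)), and the zero at \(0\) is forced by oddness. So a drop kills the pair \(\pm p(t)\), which leaves \(w\le 0\) on \((0,x_2(t))\), i.e. \(u\) nonincreasing. That contradicts the barrier comparison \(u(0,t)\le\sqrt{\varepsilon^2-\beta t}<\sqrt{\rho^2-6t}\le u(p,t)\), valid for \(t\le\varepsilon^2/\beta\), which covers all \(t<t_0\) by Lemma \ref{lem:hyperboloid_comparison}. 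We choose \(\varepsilon_0\) so that \(6\varepsilon_0^2/\beta<\rho^2\) with room to spare. The count therefore never increases and never drops, so for every \(t<t_0\) there is exactly one positive simple zero \(p(t)\), with \(w>0\) on \((0,p(t))\) and \(w<0\) on \((p(t),\xi_2(t)]\).

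I expect the main obstacle to be handling the boundaries rigorously. Theorem \ref{thm:sturmian_theorem} needs \(w\neq 0\) on the lateral boundary, and the equation has a singular coefficient at \(x=0\). The odd reflection and the choice of analytic curves \(x_2(t)\) (Remark \ref{rem:curvilinear}) are the steps I would work out first, checking that the reflected equation is smooth and parabolic, for instance by writing the flow near \(P_2\) in Cartesian coordinates where it is manifestly smooth.
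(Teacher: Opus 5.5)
\paragraph{How your route differs.} Your route is genuinely different from the paper's. The paper never counts zeros of \(u_x\). It applies Theorem \ref{thm:sturmian_theorem} to \(u-c_r\), with \(c_r(t)=\sqrt{r^2-2(t-t')}\), on an interval \([a_1,a_2]\). Four auxiliary shrinking spheres keep this interval away from both \(x=0\) and \(\xi_2(t)\); this is where \(\varepsilon<\varepsilon_0\) is used. The paper concludes that \(u(\cdot,t')\) has a single critical point in \([b_1,b_2]\), and handles \((0,b_1]\) and \([b_2,\xi_2(t))\) by the maximum principle for \(u_x\).

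Your parity argument is more economical, and its global parts are correct. The count for the odd function \(w\) is odd, so it can only fall from \(3\) to \(1\). A count of \(1\) makes \(u(\cdot,t)\) strictly decreasing. That contradicts \(u(0,t)\le\sqrt{\varepsilon^2-\beta t}<\sqrt{\rho^2-6t}\le u(\cdot,t)\) at the ball centre for \(t\le\varepsilon^2/\beta\). So your argument needs no smallness beyond \(\rho^2>6\varepsilon^2/\beta\). At the right end you do not need convexity: \(\nu\cdot\eb_3>0\) from the proof of Proposition \ref{prop:graphicality} already gives \(u_x<0\) on the part of the profile that is a graph over the \(y_1\)-axis.

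\paragraph{The gap.} The gap is exactly the step the paper's localisation avoids: applying Theorem \ref{thm:sturmian_theorem} across \(x=0\). Odd reflection does not regularise the equation. The extension satisfies the same equation, and written as \(w_t=aw_{xx}+bw_x+cw\) it has \(b=1/x\) and \(c\sim -1/x^2\). It does not help that \(w_x/x-w/x^2=(w/x)_x\) is smooth along your particular odd solution. The theorem needs \(f(x,t,w,w_x,w_{xx})\) to be smooth, and Angenent's linear version needs bounded coefficients. Cartesian coordinates give a smooth equation only in two space variables, not a one-dimensional equation for \(w\). The quotient \(h=w/x\) satisfies
\[
h_t=\frac{h_{xx}}{1+u_x^2}+\Big(1+\frac{2}{1+u_x^2}\Big)\frac{h_x}{x}+\Big(\frac{1}{u^2}-\frac{2u_{xx}^2}{(1+u_x^2)^2}\Big)h,
\]
which still has an \(x^{-1}h_x\) term. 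Your proof rests on two claims: that no zero can enter through \(x=0\), and that \(u_{xx}(0,t)=0\) forces a strict drop. Both are a zero-number statement for a radial operator, which neither Theorem \ref{thm:sturmian_theorem} nor Remark \ref{rem:curvilinear} supplies.

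\paragraph{How to repair it.} The missing ingredient can be supplied. Since \(1+\frac{2}{1+u_x^2}-\frac{3}{1+u_x^2}=\frac{u_x^2}{1+u_x^2}\) and \(u_x=xh\), the function \(H(z,t)=h(|z|,t)\) on \(\R^4\) solves
\[
H_t=\frac{\Delta H}{1+u_x^2}+\frac{h^2}{1+u_x^2}\,z\cdot\nabla H+\Big(\frac{1}{u^2}-\frac{2u_{xx}^2}{(1+u_x^2)^2}\Big)H .
\]
This equation is uniformly parabolic with smooth coefficients near \(z=0\) for \(t\) in compact subsets of \([0,t_0)\). You can then invoke a zero-number theorem for radial solutions. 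Alternatively, localise as the paper does and use the strong maximum principle for \(H\), for which the origin is an interior point. Without one of these, your argument is incomplete at its key step.

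\paragraph{Minor points.} Neither \(u_{xx}(p,0)<0\) nor \(u_{xx}(0,0)>0\) is a hypothesis; a degenerate initial zero only makes the count drop at \(t=0^+\). Also, the ball centre in Proposition \ref{prop:non_vanishing} need not coincide with the critical point \(p\).
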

\begin{proof}
    Take \(\beta\), \(\rho\), \(\varepsilon\) determined by Proposition \ref{prop:non_vanishing}, and define \(\varrho \coloneqq \sqrt{\rho^2-\varepsilon^2/\beta}\). Now let \(a_1,\ a_2\) be such that \(u(a_i,0) = \varrho/2\) for \(i=1,2\) and let \(b_1, \ b_2\) be such that \(u(b_i,0) = \varrho \) for \(i=1,2\). The mean convexity of the flow then implies that \(u(x,t)<\varrho/2\) for \(x\in [0,a_1)\cup (a_2,\xi_2(t)]\) and \(u(x,t)<\varrho\) for \(x\in [0,b_1)\cup (b_2,\xi_2(t)]\), whenever \(t<t_0\). We now place a sphere of radius \(r_1 = \varrho/4\) at \((b_1,0,0,0)\), and spheres of radii \(r_2 = \sqrt{6\varepsilon^2/\beta}\) at \((b_1,0,\varrho/2,0)\), \((b_2,0,\varrho/2,0)\) and \((a_2,0,0,0)\) which are all contained within \(\Sigma_u\). This can be done up to restricting \(\varepsilon\) so that \(\sqrt{6\varepsilon^2/\beta}<\varrho/2\), and up to stretching the profile curve as in Lemma \ref{lem:mean_convexity} (which preserves mean convexity and the structure of the initial profile curve). Possibly further restricting \(\varepsilon\) so that \(\varepsilon < \sqrt{(\varrho/4)^2 - 6\varepsilon^2/\beta}\), the barrier at \((b_1,0,0,0)\) ensures that \(u(b_1,t)> \varepsilon\) for \(t<t_0\), the barrier at \((a_2,0,0,0)\) ensures that \(\xi_2(t)>a_2\) and the barriers at \((b_i,0,\varrho/2,0)\) for \(i=1,2\) ensure that \(u(b_i,t)>\varrho/2\) for all \(t<t_0\). See Figure \ref{fig:singularity_origin} for an illustration.

    \begin{figure}[h]
        \centering
        
        \begin{subfigure}[h]{\textwidth}
            \includegraphics[width=\linewidth]{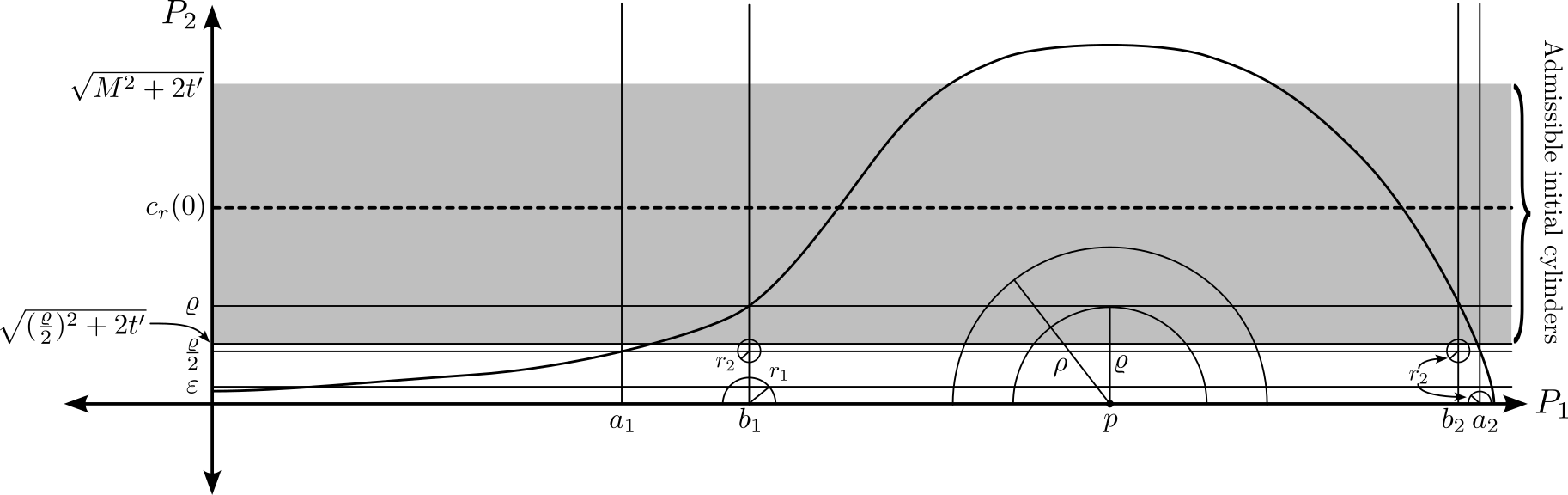}
            \caption{The profile curve with barriers at time \(t=0\)}
            \label{fig:singuarlity_origin_1} 
        \end{subfigure}

        \begin{subfigure}[h]{\textwidth}
            \includegraphics[width=\linewidth]{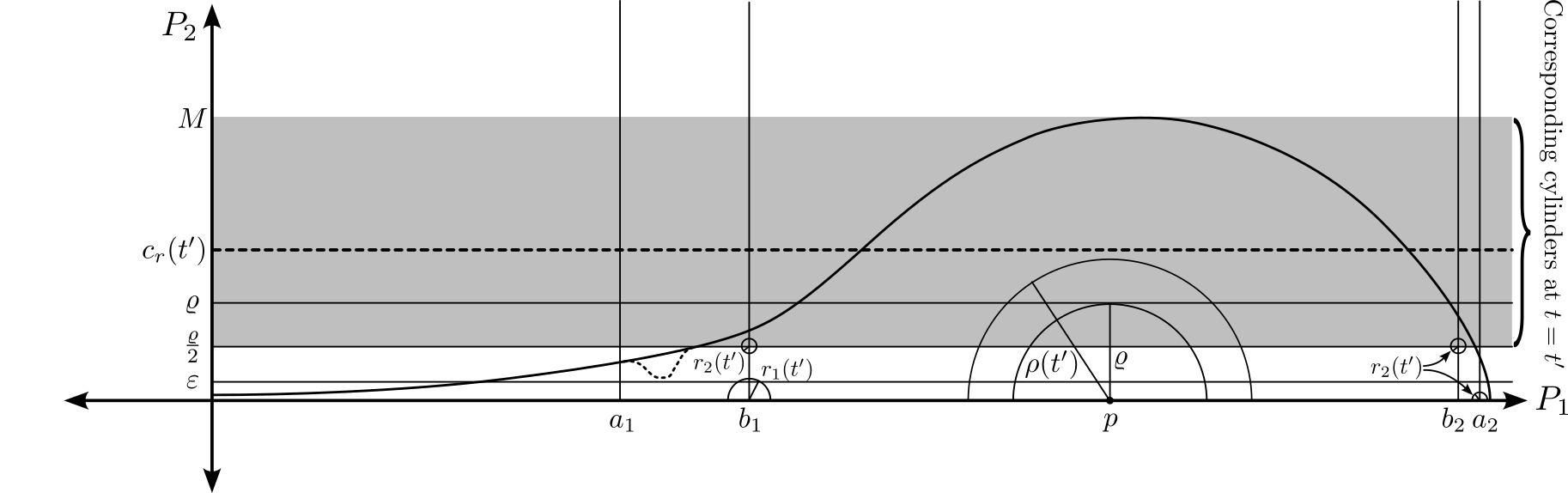}
            \caption{The profile curve with barriers at time \(t=t'\)}
            \label{fig:singularity_origin_2}
        \end{subfigure}
            
        \caption{Application of the Sturmian theorem}
        \label{fig:singularity_origin}
    \end{figure}

    Fix \(t'<t_0\) and define \(c_r(t) = \sqrt{r^2-2(t-t')}\), then \(c_r(t)\) evolves according to \eqref{eq:evolution_equation} and satisfies \(c_r(0) = \sqrt{r^2 + 2t'}\) and \(c_r(t')=r\). For any \(\varrho/2 \leq r \leq M \coloneqq \max u(\cdot,t')\), since \(u(a_i,t)<\varrho/2\) for \(i = 1,2\), it follows that \(u(\cdot,t)-c_r(t)\) will never have a zero at \(a_i\). On the other hand \(u(\cdot,0)-c_r(0)=0\) has at most two zeros for these values of \(r\) by construction, therefore we can apply the Sturmian theorem \ref{thm:sturmian_theorem} to conclude that \(u(\cdot,t) - c_r(t)\) has at most two zeros in \([a_1,a_2]\times[0,t']\). The problem is that \(u\) may take values below \(\varrho/2\) on \([a_1,b_1]\) and \([b_2,a_2]\), as illustrated in Figure \ref{fig:singularity_origin_2}, so we cannot rule out the possibility of a local minimum forming in these regions. However, we chose barriers at \((b_i,0,\varrho/2,0)\) for \(i=1,2\) precisely to ensure that \(u(b_i,t)\geq \varrho/2\) for \(t\leq t'\). Moreover, the maximum value of \(u(\cdot,t)\) is greater than \(\varrho\) and there must therefore be a global maximum in \([b_1,b_2]\), since \(u(\cdot,t)\leq \varrho\) outside of this region. Therefore there must be at least two zeros of \(u(\cdot, t') - c_r(t')\) in \([b_1,b_2]\) for \(\varrho/2<r<M\). By our previous application of the Sturmian theorem, it follows that there must be exactly two zeros, which combined with the structure of the curve means that the only critical point of \(u(\cdot,t')\) can be a global maximum.

    It remains to show that \(u_x(\cdot,t')\) on \([0,b_1]\) and \(u_x(\cdot,t')<0\) on \([b_2,\xi_2(t')]\). We note that it is not possible to apply the Sturmian theorem in these regions, instead we use the strong maximum principle to rule out the existence of other critical points (except for the local minimum at \(x=0\)). Specifically, differentiating the evolution equation \eqref{eq:evolution_equation} with respect to \(x\), we find that \(v = u_x\) satisfies
    \begin{align}\label{eq:gradient_evolution}
        v_t = \frac{1}{(1+u_x^2)}v_{xx} + \frac{1}{x}v_x + \left(\frac{1}{u^2} - 2 \frac{(u_{xx})^2}{(1+u_x^2)^2} - \frac{1}{x^2}\right)v.
    \end{align}
    By assumption, \(v(\cdot,0) >0\) on \((0,b_1]\) and \(v(\cdot,0)<0\) on \((b_2,\xi_2(t)]\). If there exists a first time \(t^*<t'\) at which \(v(x,t^*)\) for some \(x\in (0,b_1]\) or \(x\in [b_2,\xi_2(t))\), and \(U\) a neighbourhood of \(x\) avoiding \(0\) and \(\xi_2(t)\), then the strong maximum principle applied to \eqref{eq:gradient_evolution} implies that \(v\equiv 0\) in \(U\times [0,t^*]\), which contradicts the structure of the profile curve at the initial time.  
\end{proof}

It is an almost immediate corollary of this result that all zeros of the profile curve must belong to a closed set connected to the origin. 

\begin{cor}\label{cor:singularity_connected}
    Suppose that \(M_0= \Sigma_u\) satisfies the assumptions of Proposition \ref{prop:singularity_at_origin}, then the zero set \(Z_{t_0} = \{\xi\in [0,\xi_2(t_0)]:u(\xi,t) = 0\}\) is of the form \([0,\eta]\cup \{\xi_2(t_0)\}\) for some \(\eta < \xi_2(t_0)\).
\end{cor}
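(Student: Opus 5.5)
Let \(\eta := \sup\{\xi < \xi_2(t_0) : u(\xi,t_0)=0\}\), with the convention \(\eta = 0\) if the set contains only the origin. The plan is to show that \(u(\cdot,t_0)\) vanishes identically on \([0,\eta]\), is strictly positive on \((\eta,\xi_2(t_0))\), and that \(\eta<\xi_2(t_0)\).

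\textbf{Step 1: the zero set is an interval starting at the origin.} By Proposition \ref{prop:singularity_at_origin}, for every \(t<t_0\) the function \(u(\cdot,t)\) is nondecreasing on \([0,p(t)]\) and nonincreasing on \([p(t),\xi_2(t)]\). The times \(p(t)\) lie in a bounded interval, so we may pass to a limit point \(p^*\) along a sequence \(t_k\to t_0\). Pointwise monotone limits of unimodal functions of this kind remain unimodal. Hence \(u(\cdot,t_0)\) is nondecreasing on \([0,p^*]\) and nonincreasing on \([p^*,\xi_2(t_0)]\), and by Lemma \ref{lem:continuity} it is continuous.

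Now suppose \(u(\xi,t_0)=0\) for some \(\xi\le p^*\). Since \(u\ge 0\) and \(u\) is nondecreasing on \([0,p^*]\), it follows that \(u\equiv 0\) on \([0,\xi]\).

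\textbf{Step 2: no zeros on the right-hand side except the endpoint.} On the right-hand side we need to show that \(u(\cdot,t_0)>0\) on \([p^*,\xi_2(t_0))\). Here we use the barrier spheres from the proof of Proposition \ref{prop:singularity_at_origin}.
\begin{itemize}
\item The sphere centred at \((b_i,0,\varrho/2,0)\) keeps \(u(b_i,t)>\varrho/2\) up to \(t_0\), which forces \(p^*\in[b_1,b_2]\).
\item The sphere centred at \((a_2,0,0,0)\) keeps \(\xi_2(t_0)>a_2\).
\end{itemize}
Suppose \(u(\xi,t_0)=0\) for some \(\xi\in[p^*,\xi_2(t_0))\). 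Since \(u(\cdot,t_0)\) is nonincreasing on this interval, \(u\equiv 0\) on \([\xi,\xi_2(t_0)]\). But the barrier ball of radius \(\sqrt{\rho^2-6\varepsilon^2/\beta}\) around \((a_2,0,0,0)\) survives to \(t_0\), and it forces \(u(\cdot,t_0)>0\) at points near \(a_2\) strictly to the left of \(\xi_2(t_0)\). This is a contradiction, provided \(\xi\) is to the left of those points.

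\textbf{Main obstacle.} The difficulty is handling a zero \(\xi\) lying beyond \(a_2\), very close to \(\xi_2(t_0)\). For this I would invoke Proposition \ref{prop:bubblesheet}, which says that a singular point cannot be the right \(P_1\)-crossing point. A zero of \(u(\cdot,t_0)\) at an interior point is necessarily a singular point, since the flow cannot remain smooth while the profile touches \(P_1\) in the interior. So if \(u\equiv 0\) on \([\xi,\xi_2(t_0)]\) with \(\xi<\xi_2(t_0)\), then singular points accumulate at the right crossing point. The singular set is closed, so the right crossing point would itself be singular, which Proposition \ref{prop:bubblesheet} (via Lemma \ref{lem:away_from_endpoints}) excludes. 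Alternatively, one can use that the profile near the right endpoint is a smooth graph over the \(y_1\)-axis, as in the proof of Lemma \ref{lem:continuity}. Either way, \(u(\cdot,t_0)>0\) on \([p^*,\xi_2(t_0))\).

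\textbf{Conclusion.} Combining the two steps, \(u(\cdot,t_0)\) is positive on \((\eta,\xi_2(t_0))\), vanishes on \([0,\eta]\), and \(\eta\le p^*<\xi_2(t_0)\). Therefore
\[
Z_{t_0} = [0,\eta]\cup\{\xi_2(t_0)\}.
\]
Note that \(\eta=0\) is allowed, in which case \(Z_{t_0}=\{0,\xi_2(t_0)\}\).
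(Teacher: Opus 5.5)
Your argument is correct and is essentially the paper's. Both rest on three ingredients: continuity of \(u(\cdot,t_0)\); the isolatedness of the zero at \(\xi_2(t_0)\), which comes from regularity at the right \(P_1\)-crossing point (Lemma \ref{lem:away_from_endpoints}); and the increasing-then-decreasing shape from Proposition \ref{prop:singularity_at_origin}. You pass that shape to the limit more explicitly than the paper's ``mean value theorem and looking slightly back in time'', by extracting \(p(t_k)\to p^*\).

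Two small remarks. First, the barrier discussion in Step 2 is redundant once the endpoint zero is known to be isolated. It also misattributes the radius \(\sqrt{\rho^2-6\varepsilon^2/\beta}\), which is the surviving radius of the ball from Proposition \ref{prop:non_vanishing}, not of the small ball centred at \((a_2,0,0,0)\). Second, to get \(0\in Z_{t_0}\) at all you should state that a singular point exists at \(t_0\) and, by Proposition \ref{prop:bubblesheet}, is a zero of \(u(\cdot,t_0)\) in \([0,\xi_2(t_0))\); by your Steps 1 and 2 this zero lies in \([0,p^*)\), which forces \(u(0,t_0)=0\).
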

\begin{proof}
    By the continuity of \(u(\cdot,t_0)\) (c.f. Lemma \ref{lem:continuity}) it follows that \(Z_{t_0}\) is closed. Moreover, the zero at \(\xi_2(t_0)\) is isolated due to the fact that the gradient of \(u(\cdot,t_0)\) approaches \(-\infty\) near \(\xi_2(t_0)\). Suppose for a contradiction that \(Z_{t_0}\backslash \{\xi(t_0)\}\) has multiple connected components, then between each of then there will be a point at which \(u(\cdot,t_0)\) attains a positive value. Applying the mean value theorem and considering the flow slightly back in time, one finds that the presence of such a point is not compatible with the shape of the profile curve up to the first singular time given by Proposition \ref{prop:singularity_at_origin}.
\end{proof}

We now show that in fact \(Z(t_0) = \{0\}\) by constructing a barrier that gives a uniform lower bound for \(u(\cdot,t_0)\) on any interval not containing the origin.

\begin{prop}\label{prop:isolated_singularity}
    Suppose that \(M_0 = \Sigma_u\) satisfies the assumptions of Proposition \ref{prop:singularity_at_origin}. There exists \(\tau>0\) such that given any compact interval \([c,d]\subset (0,\tau]\) we have \(\inf_{x\in[c,d]}u(x,t_0)>0\). 
\end{prop}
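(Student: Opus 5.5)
We will build an explicit barrier near the origin: an \(SO(2)\times SO(2)\)-invariant region that the flow cannot cross before \(t_0\), whose profile stays strictly positive away from \(x=0\). By Corollary \ref{cor:singularity_connected}, \(Z_{t_0}\backslash\{\xi_2(t_0)\}=[0,\eta]\), so it suffices to rule out \(\eta>0\). The bound must be uniform on compacts away from \(0\); it may degenerate as \(x\to 0\), which is all the statement asks.

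The barrier is a family of tori (or hyperboloid-type surfaces) lying inside \(M_t\). The profile curve is increasing on \((0,p(t))\) (Proposition \ref{prop:singularity_at_origin}), and by Lemma \ref{lem:singularities_are_flat} it is flat at the zeros, so no small ball centred on \(P_1\) near the origin survives. Instead we use the other rotation. Consider a small circle in the profile plane centred at \((x_c,y_c)\) with \(x_c>0\) and radius \(r<y_c\). Rotated, it gives a region diffeomorphic to \(\B^2\times\Sb^1\) (\(\Sb^1\) in \(P_1\)-directions) or a genuine \(\Sb^1\times\Sb^1\times\) disc solid. Its evolution under mean curvature flow is a shrinking solid torus whose lifespan is comparable to \(r^2\) when \(r\ll x_c\), because the \(\kappa_3\) term is then negligible.

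\textbf{Key steps.}
\begin{enumerate}[(1)]
\item Quantify the lifespan. Fix a point \(x_c\in(0,\tau]\) with \(u(x_c,0)\) large compared with \(\sqrt{t_0}\). Place a solid torus \(T\subset \textnormal{int}(M_0)\) generated by the disc of radius \(r\) about \((x_c,u(x_c,0)/2)\). To show \(T\) survives past \(t_0\), compare it with a subsolution, for instance a shrinking ball of radius \(\sqrt{r^2-6t}\) about a point of the torus core. Using the rotational symmetry, every rotated copy is also inside, so the whole family of balls provides the barrier. This requires \(r^2>6t_0\).
\item Arrange the initial data. Use the freedom from Lemma \ref{lem:mean_convexity} (stretching in \(x\)) together with Lemma \ref{lem:hyperboloid_comparison}, which gives \(t_0\le\varepsilon^2/\beta\). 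We need \(u(\cdot,0)\ge c_0>0\) on \([c,\tau]\) with \(c_0^2\gg 6\varepsilon^2/\beta\), possibly after further shrinking \(\varepsilon\), exactly as in Proposition \ref{prop:singularity_at_origin}.
\item Conclude. The ball about \((x,0,y,0)\) stays inside \(M_t\), so \(u(x,t)\ge y-\sqrt{r^2-6t}>0\) for all \(t\le t_0\). Taking the infimum over \([c,d]\) by compactness and continuity (Lemma \ref{lem:continuity}) gives the claim.
\end{enumerate}

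\textbf{Main obstacle.} The difficulty is near the origin. Near \(x=0\) the hypothesis \(g<u<h\) forces \(u(0,0)<\varepsilon\), and the singularity forms there at time \(t_0\le\varepsilon^2/\beta\). So we need \(\tau\) with \(u(\tau,0)\) not small. For \(x\in[c,\tau]\), though, the lower bound \(u(x,0)\) may itself be \(O(\varepsilon)\), making the ball barrier too short-lived.

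My first attempt is to take \(\tau\) and the barrier scales in a self-similar way. Choose \(u\) on \([0,\tau]\) to dominate the cone \(y=\sqrt{1-\beta'}\,x\) slightly above the hyperboloid barrier's asymptotic cone. For each \(x\ge c\) the available ball radius is then \(\gtrsim x\ge c\). Since \(t_0\le\varepsilon^2/\beta\), we can guarantee \(c^2>6t_0\) once \(c\gtrsim\varepsilon\).

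This alone does not handle arbitrary \(c\to 0\), because the problem is invariant under changing \(c\). For that I would instead use a shrinking-cone or expanding hyperboloid inner barrier of the form
\[
\{(1-\beta')|\bar x|^2 \le |\bar y|^2 + \beta'' t - \delta\},
\]
run with a supersolution computation as in Lemma \ref{lem:hyperboloid_comparison} but with reversed inequality. Its profile at \(t_0\) is positive for every \(x>0\) provided \(\beta'' t_0<\delta\). Verifying the sign of \((\pa_t-\Delta)f\) for this inner hyperboloid, which requires \(\beta'\) in a complementary range, is the step I expect to be hardest.
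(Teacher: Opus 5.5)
There is a genuine gap: nothing in your proposal controls \(u(\cdot,t_0)\) near the origin, and that is the entire content of the statement. Shrinking balls (or your solid tori, which you compare with balls) cannot reach there. Suppose a ball of radius \(r\) in \(\textnormal{int}(M_0)\), after shrinking to radius \(\sqrt{r^2-6t_0}\), still contains a point with \(|\bar x|=x\). Then the slice of the original ball through that point in the \(\bar y\)-directions is a disc of radius \(>\sqrt{6t_0}\), so \(u(x,0)>\sqrt{6t_0}\).

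On the other hand, \(0\) is a local minimum of \(u(\cdot,t)\) by Proposition \ref{prop:singularity_at_origin}. Hence \(\pa_t u(0,t)=2u_{xx}(0,t)-1/u(0,t)\ge -1/u(0,t)\), which gives \(t_0\ge u(0,0)^2/2\). So every \(x\) with \(u(x,0)\le\sqrt3\,u(0,0)\) is out of reach. This is a fixed neighbourhood of \(0\), exactly where \([0,\eta]\) would sit.

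Re-engineering \(u\) (dominating a cone, shrinking \(\varepsilon\)) is not allowed. The proposition concerns every \(M_0\) satisfying the hypotheses of Proposition \ref{prop:singularity_at_origin}; in Example \ref{eg:profile_curve}, \(u=\alpha x^2+\mu\) near \(0\).

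The hyperboloid fallback fails outright. Any lower barrier for \(u\) near \(0\) that is valid up to \(t_0\) must vanish at \((0,t_0)\), since \(0\in Z_{t_0}\). Your profile \(\sqrt{(1-\beta')x^2+\delta-\beta''t}\) with \(\beta''t_0<\delta\) would instead give \(u(0,t_0)>0\). To be positive on \((0,\tau]\) and still vanish at the origin, it would have to pinch at exactly the unknown time \(t_0\), i.e.\ \(\delta=\beta''t_0\). It would also need lateral boundary control at \(x=\tau\), because the reversed containment is false globally (\(M_0\) meets \(P_1\) at \(\xi_2\)). Moreover, the reversed sign computation forces \(\beta''\ge 2+2\beta'\), as one sees by taking normals in the \(\bar x\)-plane. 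Barriers for \(u\) itself face this timing problem at a point where \(u\) genuinely vanishes at \(t_0\).

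The missing idea is to bound the gradient rather than \(u\). The paper takes \(b\) with \(u(b,t_0)>\varrho\) and applies the mean value theorem on \([\eta,b]\). Together with smoothness of the flow near \((a,0,u(a,t_0),0)\), this bounds \(u_x(a,t)\) below by a positive constant for \(t\in[t_0-\varepsilon,t_0)\). Proposition \ref{prop:singularity_at_origin} gives \(u_x>0\) on \((0,a)\). Then \(v=u_x\) solves \eqref{eq:gradient_evolution}, whose zeroth-order term \(v/u^2\) is favourable when \(v>0\): the collapse of the \(\bar y\)-circles steepens a positive gradient.

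For \(0<\sigma<c\), the static function \(\varphi(x)=\alpha(x-\sigma)^2\) is a strict subsolution for small \(\alpha\); the \(1/u^2\) term is simply discarded. It lies below \(u_x\) on the parabolic boundary of \((\sigma,a)\times(t_0-\varepsilon,t_0)\), so \(u_x\ge\varphi\) there. Integrating from \(\sigma\), where only \(u\ge 0\) is used, gives \(u(x,t)\ge\frac{\alpha}{3}(x-\sigma)^3\) independently of \(t\). Letting \(t\to t_0\) yields the claim with \(\tau=a\) and forces \(\eta=0\). No region has to survive until \(t_0\), which is why this argument escapes the timing obstruction above.
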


\begin{proof}
    Recall that at the singular time our solution \(M_{t_0}\) contains a spherical barrier of radius \(\varrho = \sqrt{\rho^2 - 6\varepsilon^2/\beta}\). Therefore, there must exist some \(b\) such that \(u(b,t_0)>\varrho\). Take \(\eta\) to be the supremum of the \(Z_{t_0}\), as defined in Corollary \ref{cor:singularity_connected}, then \(u(\cdot,t_0)\) is continuous on \([\eta,b]\) and differentiable on \([\eta,b)\), so we can apply the mean value theorem to conclude that there exists some \(a\) and \(\delta\) such that \(u^*_x(a,t_0) > \delta\). By continuity of the flow up to the first singular time, it follows that there exists \(\varepsilon\) such that \(u_x(a,t)> \delta/2\) for all \(t\in [t_0-\varepsilon, t_0]\). By the structure of the profile curve up to the first singular time given by Proposition \ref{prop:singularity_at_origin}, we must also have \(u_x(x,t)>0\) for \((x,t)\in (0,a)\times [t_0-\varepsilon,t_0)\).

    Recall that the gradient \(v= u_x\) satisfies the evolution equation \eqref{eq:gradient_evolution} 
    \begin{align*}
        v_t = \frac{v_{xx}}{1+v^2} + \frac{v_x}{x} + \left(\frac{1}{u^2} - \frac{2 v_x^2}{(1+ v^2)^2} - \frac{1}{x^2}\right)v 
        \eqqcolon \Lc v.
    \end{align*}
    Now, for any \(0<\sigma <c\), set \(\varphi(x) = \alpha (x-\sigma)^2\). Using that \(\varphi_t = 0\), the expression
    \begin{align*}
        \Lc \varphi - \varphi_t
        &= \frac{\varphi_{xx}}{1+\varphi^2} + \frac{\varphi_x}{x} + \left(\frac{1}{u^2} - \frac{2\varphi_x^2}{(1+\varphi^2)^2} - \frac{1}{x^2}\right)\varphi \\
        &> \left(\frac{\varphi_{xx}}{1+\varphi^2} - \frac{2\varphi_x^2\varphi}{(1+\varphi^2)^2} \right) + \left(\frac{\varphi_x}{x} - \frac{\varphi}{x^2}\right) \\
        & > \frac{2\alpha}{1 + \alpha^2(x-\sigma)^4}\left(1 - 4\alpha^2\tau^4\right)
        + \frac{\alpha(x-\sigma)}{x}\left(1 - \frac{\sigma}{x}\right)
    \end{align*}
    is strictly positive provided \(\alpha < (2\tau^2)^{-1}\). 

    Up to further decreasing \(\alpha\) (which will not affect the positivity of the above expression) we can ensure that \(\varphi(x)< u_x(t_0-\varepsilon)\) for \(x\in [\sigma,\tau]\) and also that \(\varphi(\sigma) < u_x(\sigma,t)\) and \(\varphi(\tau)< u_x(\tau,t)\) for \(t_0-\varepsilon < t<t_0\). We can therefore apply the weak maximum principle to conclude that \(\varphi(x) < u_x(x,t)\) for \((x,t)\in (\sigma,\tau)\times(t_0-\varepsilon, t_0)\). It follows by integrating that
    \begin{align*}
        u(x,t)\geq \frac{\alpha}{3} (x-\sigma)^3
    \end{align*}
    on this domain. Moreover, this bound is strictly positive when we restrict to \(x\in [c,d]\subset [\sigma,\tau]\), and can be extended to \(t_0\), since it is independent of \(t\). 
\end{proof}

It follows from Proposition \ref{prop:isolated_singularity} that any mean curvature flow satisfying the assumptions of Proposition \ref{prop:singularity_at_origin} must form an isolated singularity at the origin. We now show that as a consequence of this, the profile curve \(u(\cdot,t_0)\) is continuously differentiable at 0, which, by Lemma \ref{lem:continuity} implies continuous differentiability along the whole curve away from the end-point \(\xi(t_0)\). We note that to carry out the mean convex approximation in Lemma \ref{lem:mean_convex_approximation} it is crucial that the profile curve is continuously differentiable at zero as opposed to merely differentiable. Showing this requires considering the limit flow along a sequence \((x_i,t_0)\) converging to \((0,t_0)\). The argument shares similarities with  the tangent flow argument in Section \ref{sec:singularity_formation}, so we omit some details.

\begin{lem}\label{lem:differentiability}
    Suppose that \(M_0 = \Sigma_u\) satisfies the assumptions of Proposition \ref{prop:singularity_at_origin}, then \(u(\cdot,t_0)\) has strictly positive derivative in a neighbourhood of \(0\) and is continuously differentiable away from \(\xi_2(t_0)\).
\end{lem}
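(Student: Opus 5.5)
The plan is to prove the two claims separately: first strict positivity of \(u_x(\cdot,t_0)\) on a punctured neighbourhood \((0,\tau)\), then that \(u_x(x,t_0)\to 0\) as \(x\downarrow 0\). The second claim, together with \(u_x(0,t_0)=0\), gives continuous differentiability at \(0\), and Lemma \ref{lem:continuity} supplies smoothness on \((0,\xi_2(t_0))\).

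\emph{Step 1: positivity.} By Proposition \ref{prop:isolated_singularity} and Corollary \ref{cor:singularity_connected}, \(Z_{t_0}=\{0,\xi_2(t_0)\}\). Lemma \ref{lem:continuity} then gives \(u(\cdot,t_0)\in C^\infty((0,\xi_2(t_0)))\). The convergence \(u(\cdot,t)\to u(\cdot,t_0)\) is smooth on compact subsets of \((0,\tau]\), because the flow is smooth there up to \(t_0\) by Proposition \ref{prop:bubblesheet}. In the proof of Proposition \ref{prop:isolated_singularity} we showed \(u_x(x,t)>\alpha(x-\sigma)^2\) on \((\sigma,\tau)\times(t_0-\varepsilon,t_0)\), with \(\alpha=\alpha(\sigma)>0\). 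Passing to the limit gives \(u_x(x,t_0)\ge\alpha(x-\sigma)^2>0\) for \(x\in(\sigma,\tau)\). Since \(\sigma>0\) is arbitrary, \(u_x(\cdot,t_0)>0\) on \((0,\tau)\).

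\emph{Step 2: the derivative at \(0\).} Lemma \ref{lem:singularities_are_flat} at \(\xi=0\) gives \(0\le u(x,t_0)\le\varepsilon x\) for small \(x\). Hence \(u(\cdot,t_0)\) is differentiable at \(0\) with \(u_x(0,t_0)=0\).

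\emph{Step 3: continuity of \(u_x\) at \(0\).} This is the main obstacle. Argue by contradiction: suppose there are \(x_i\downarrow 0\) with \(u_x(x_i,t_0)\ge\delta>0\). Set \(\lambda_i=u(x_i,t_0)^{-1}\to\infty\) and rescale,
\begin{align*}
    M^i_t=\lambda_i\big(M_{t_0+\lambda_i^{-2}t}-(x_i,0,0,0)\big).
\end{align*}
By White's compactness theory for mean convex flows \cite{White00,White03}, a subsequence converges to an ancient, mean convex, noncollapsed limit flow. The convergence is smooth near the point \((0,0,1,0)\) at time \(0\), since there \(M_{t_0}\) is smooth with bounded rescaled curvature (its profile is a graph of slope in \([\delta,\infty)\) at unit height). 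The slope of the profile is scale invariant, so the limit profile at time \(0\) has slope at least \(\delta\) at that point.

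Next identify the limit. The flatness bound gives \(x_i\lambda_i=x_i/u(x_i,t_0)\ge\varepsilon^{-1}\to\infty\), so the rotation about \(P_2\) has radius tending to infinity in the rescaled picture. The limit therefore splits off a line in the \(x_2\)-direction. It is \(\R\times N_t\), where \(N_t\subset\R^3\) is an ancient noncollapsed mean convex flow that is rotationally symmetric about the \(x_1\)-axis, with profile a nondecreasing graph (from Proposition \ref{prop:singularity_at_origin}). By the classification of such flows (Brendle--Choi and Angenent--Daskalopoulos--Sesum), \(N_t\) is a shrinking cylinder, a bowl soliton, an ancient oval, or a plane/slab. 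The plane is ruled out by the unit height at the base point. Ovals and the bowl each have a tip where the profile meets the axis at finite rescaled distance. That would produce zeros of \(u(\cdot,t_0)\), or of \(u(\cdot,t)\) for \(t\) near \(t_0\), at distance \(O(\lambda_i^{-1})=O(u(x_i,t_0))\) from \(x_i\). But \(Z_{t_0}\cap[0,\tau]=\{0\}\), \(x_i\lambda_i\to\infty\), and \(u\) is positive on \([x_i/2,\tau]\) uniformly for \(t\) near \(t_0\) by the barrier in Proposition \ref{prop:isolated_singularity}; so any tip would have to sit at rescaled \(x_1=-\infty\), which is impossible.

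The delicate part is making this tip-exclusion rigorous at times \(t<0\) of the limit flow; the backward-in-time barrier \(u\ge\frac{\alpha}{3}(x-\sigma)^3\), with \(\sigma=x_i/2\), is what I would use. Once that is done the limit is a cylinder, whose profile is horizontal, so its slope at the base point is \(0\). This contradicts the slope bound \(\ge\delta\), hence \(u_x(x,t_0)\to0\) as \(x\to0\) and \(u(\cdot,t_0)\in C^1([0,\xi_2(t_0)))\).
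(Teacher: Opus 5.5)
\textbf{Steps 1 and 2.} These are fine. In Step 1 you pass the barrier \(u_x>\alpha(x-\sigma)^2\) from the proof of Proposition \ref{prop:isolated_singularity} to the limit. The paper instead applies the strong maximum principle to \eqref{eq:gradient_evolution} at \(t=t_0\), which needs only \(u_x(\cdot,t)\geq 0\) for \(t<t_0\) and smoothness up to \(t_0\) away from the origin. Either works.

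\textbf{The gap in Step 3.} The limit flow is never actually identified, for three reasons.
\begin{enumerate}[(i)]
\item Smooth convergence at the base point does not follow from what you cite. After rescaling by \(\lambda_i=u(x_i,t_0)^{-1}\), the quantities \(u\kappa_2\le 1\) and \(|u\kappa_3|\le u/x\le\varepsilon\) are controlled. Nothing bounds \(u\kappa_1=-uu_{xx}/(1+u_x^2)^{3/2}\) from above; Lemma \ref{lem:u_xx_bound} bounds \(\kappa_1\) only from below. So the slope bound \(\geq\delta\) cannot be transferred to the limit before the limit is known.
\item The plane is not excluded by the unit height. The static hyperplane \(\{x_1=0\}\) contains \((0,0,1,0)\) and has vertical profile. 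It is compatible with slope \(\geq\delta\), and with \(H(p_i,t_0)\,u(x_i,t_0)\to 0\), which your normalisation allows.
\item A tip of a bowl or oval at finite rescaled distance yields a zero of \(u\) only if the convergence is smooth near the tip, which you have not shown. Otherwise it gives only \(u=o(u(x_i,t_0))\) there. The barrier \(\frac{\alpha}{3}(x-\sigma)^3\) cannot supply this: it gives a lower bound of order \(x_i^3\) near \(x_i\), which says nothing at the scale \(u(x_i,t_0)\) unless \(u(x_i,t_0)=O(x_i^3)\). At negative rescaled times no barrier is needed anyway, since \(u(\cdot,t)>0\) on \([0,\xi_2(t))\) for \(t<t_0\).
\end{enumerate}

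\textbf{The missing idea.} The following observation, which the paper uses, makes the classification theorems unnecessary.
\begin{enumerate}[(1)]
\item For every \(t\le t_0\), the region enclosed by \(M_t\) contains the flat disc \(\{\bar y=0,\ |\bar x|\le\xi_2(t_0)\}\subset P_1\). Since \(x_i\lambda_i\to\infty\) (as you note) and \(\lambda_i(\xi_2(t_0)-x_i)\to\infty\), its rescalings converge to all of \(P_1\). This is the same mechanism that gave you the \(x_2\)-splitting, now also in the \(x_1\)-direction.
\item Hence every time slice of the limit is a closed convex set containing a \(2\)-plane, so it splits as \(\R^2\times C_t\). The \(SO(2)\)-symmetry in \(\bar y\) makes \(C_t\) a disc.
\item At \(t=0\) the radius is \(1\). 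The base point gives radius at least \(1\). Your Step 1 monotonicity gives at most \(1\): the rescaled profile \(w_i(z)=\lambda_i u(x_i+z/\lambda_i,t_0)\) satisfies \(w_i\le 1\) for \(z\le 0\).
\item This excludes half-spaces, slabs, bowls, ovals and spheres at once, since none contains a full line in the \(x_1\)-direction. It identifies the limit as the shrinking \(\R^2\times\Sb^1\) with circle parallel to \(P_2\).
\end{enumerate}
Only after this should you invoke local regularity (multiplicity one for mean convex flows) to get smooth convergence near the base point and the contradiction with slope \(\geq\delta\). Alternatively, normalise by \(H(p_i,t_0)\) as the paper does, so that Haslhofer--Kleiner's global convergence theorem gives smooth convergence at the base point from the outset. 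The paper obtains the \(\Sb^1\) factor from the solid cylinders \(\ell_k\times D_k\) and the \(\R^2\) factor from the disc in \(P_1\). It excludes the remaining half-space alternative by a blow-down argument.
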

\begin{rem}
    For the purposes of showing differentiability at 0, it is enough that \(u_x(\cdot,t_0)\geq 0\) an a neighbourhood of 0. However, we require the strict inequality later in this paper.
\end{rem}
\begin{proof}
    By the same argument as in the proof of Proposition \ref{prop:isolated_singularity} we know that there is some point \(b\) at which \(u_x(b,t_0)>0\). Since we know \(u_x(\cdot,t_0)\) is strictly positive away from \(x=0\), the evolution equation for the gradient \eqref{eq:gradient_evolution} is still valid up to the singular time, and in particular on \([a, b] \times(t_0-\varepsilon,t_0]\) for any \(a < b\). If \(u_x(x,t_0)=0\) for some \(x\in (a,b)\), then by the strong maximum principle we would have \(u_{x}(\cdot,t)\equiv 0\) back in time, which contradicts the structure of the profile curve given by Proposition \ref{prop:isolated_singularity}. Therefore, there is a neighbourhood of the origin on which \(u_x(\cdot,t_0) > 0\). Combined with the flatness estimate in Lemma \ref{lem:singularities_are_flat}, this implies that for all \(\varepsilon\) there exists \(\mu_\varepsilon\) such that 
    \begin{align*}
        0 \leq (u^*(x,t_0)-u^*(0,t_0))/x \leq \varepsilon
    \end{align*}
    for \(x<\mu_\varepsilon\), so the the derivative as \(x\to 0^+\) exists and is zero. By symmetry across the \(y_1\)-axis, the limit as \(x\to 0^-\) exists and is equal to zero. Thus, \(u_x(0,t_0) = 0.\) 

    It remains to show continuous differentiability near zero, that is that \(\lim_{x\to 0}u_x(x,t_0) = 0\). By the same reasoning as Proposition \ref{prop:graphicality}, this is equivalent to showing that \(\lim_{p\to 0}\tilde{\nu}(p)\cdot \eb_3 = 1\), where \(\tilde{\nu}(p)\) is the normal to the profile curve at some \(p\in P_0\). Suppose for a contradiction that for all \(\delta > 0\) there exists a sequence \(p_k\to 0\) such that \(\tilde{\nu}(p_k) \cdot \eb_3 < 1-\delta\). By Theorem 1.10 of \cite{Haslh17}, the limit of the parabolic rescaling \((p,t)\mapsto (s_k^{-1}(p-p_k), \ s_k^{-2}(t-t_0))\), where \(s_k=H(p_k,t_0)\), converges locally smoothly as \(k\to\infty\) to a limit flow with convex time slices. We show that the only possible limit flow is a shrinking \(\R^2\times \Sb^1\) with the Euclidean factor parallel to \(P_1\) and the spherical factor parallel to \(P_2\).  On this cylinder we have \(\tilde{\nu}\cdot \eb_3 \equiv 1\) which contradicts the existence of the sequence \(p_k\to 0\) satisfying \(\tilde{\nu}(p_k,t_0)\cdot \eb_3 < 1-\delta\). 

    \begin{figure}[h]
        \centering
        \includegraphics[width = 0.65\textwidth]{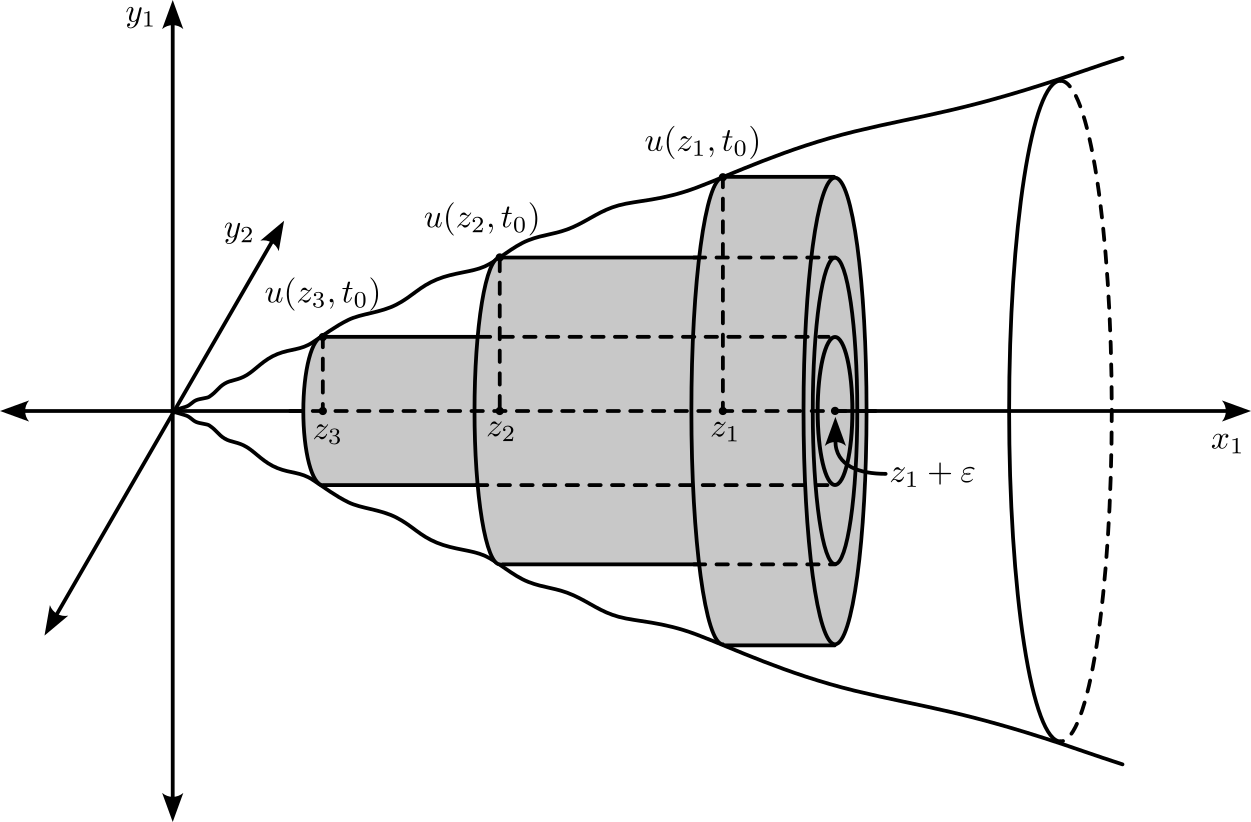}
        \caption{The solid cylinders \(\ell_k\times D_k\)}
        \label{fig:cylinder_containment}
    \end{figure}

    We now demonstrate why the limit flow must be a shrinking \(\R^2\times \Sb^1\). First note that, without loss of generality, each \(p_k\) can be written as \((z_k,0,u(z_k,t_0),0)\) for \(z_k\downarrow 0\). Moreover, by the monotonicity of the profile curve near the origin, for each \(k\in\{1,2,\dots\}\), \(M_{t_0}\) contains the solid cylinder \(\ell_k \times D_k\), where \(\ell_{k} = \{(\bar{x},\bar{y})\in P_1:z_k\leq x_1\leq z_1+ \varepsilon, \ x_2 = 0\}\) for \(\varepsilon>0\) small, and \(D_k\) is a disc of radius \(u(z_k,t_0)\) centred at \((z_k,0,0,0)\) and parallel to \(P_2\), see Figure \ref{fig:cylinder_containment}. By similar reasoning to the proof of Lemma \ref{lem:disc_containment}, the limit flow will either contain a solid cylinder of the form \(\ell\times D\), where \(\ell =\{(\bar{x},\bar{y})\in P_1:x_1\geq 0, x_2 = 0\}\) and \(D\) is some disc in \(P_2\), or it will contain \(H = \{(\bar{x},\bar{y})\in \R^4:x_1\geq 0, \ x_2 = 0, y_2 \leq 0\}\). In the latter case, since \(H\) is a cone, the blow-down of the limit flow will still contain \(H\), but must also be a self-similar shrinker by Theorem 1.11 of \cite{Haslh17}. This forces the limit flow to be a plane, which contradicts that we have normalised the mean curvature to be one along the sequence \(p_k\) and so it cannot vanish identically in the limit. It must therefore follow that the limit flow must contain \(\ell \times D\), and moreover, the intersection of the limit flow with \(P_2\) is precisely \(\pa D\) (by definition of \(D_k\)), which gives the spherical factor. On the other hand, we now now that the disc \(\tilde{D}\) of radius \(\xi(t_0)\), centred at the origin and parallel to \(P_1\) will not escape to infinity under parabolic rescaling. In fact, by similar reasoning to the proof of Lemma \ref{lem:disc_containment}, it must converge to a copy of \(\R^2\) parallel to \(P_1\). This completes the proof. 
\end{proof}

We now apply the above results to construct an explicit profile curve that is mean convex, non-vanishing and forms an isolated singularity at the origin at the first singular time, thus demonstrating the validity of statement (1) of Theorem \ref{thm:main_theorem}. Note that the construction below is just one of many possibilities. 

\begin{eg}\label{eg:profile_curve}
    Consider \(u\in C^0([0,p+1])\cap C^\infty([0,p+1))\) given by
    \begin{align}\label{eq:profile_curve}
        u(x) = 
        \begin{cases}
            \alpha x^2 + \mu & \text{ for } x\in [0,p-1] \\
            \psi(x) & \text{ for } x\in \left(p-1,\ p-\frac{1}{2}\right) \\
            \sqrt{1-(x-p)^2} & \text{ for } x\in \left[p-\frac{1}{2},\ p+1\right],
        \end{cases}
    \end{align}
    with \(\alpha,\ \mu\) small enough that \(u(p-1)<u(p)\), and \(\psi\) a smooth, strictly increasing interpolation function. The definition of \(u\) ensures that \(u_x>0\) for \(x\in (0,p)\) and \(u_x, \ u_{xx} <0\) for \(x\in (p,\xi_2)\) independent of its parameters; this structure is needed to apply Lemma \ref{lem:mean_convexity} and Proposition \ref{prop:singularity_at_origin}.

    \begin{figure}[h]
        \centering
        \includegraphics[width = 0.75\textwidth]{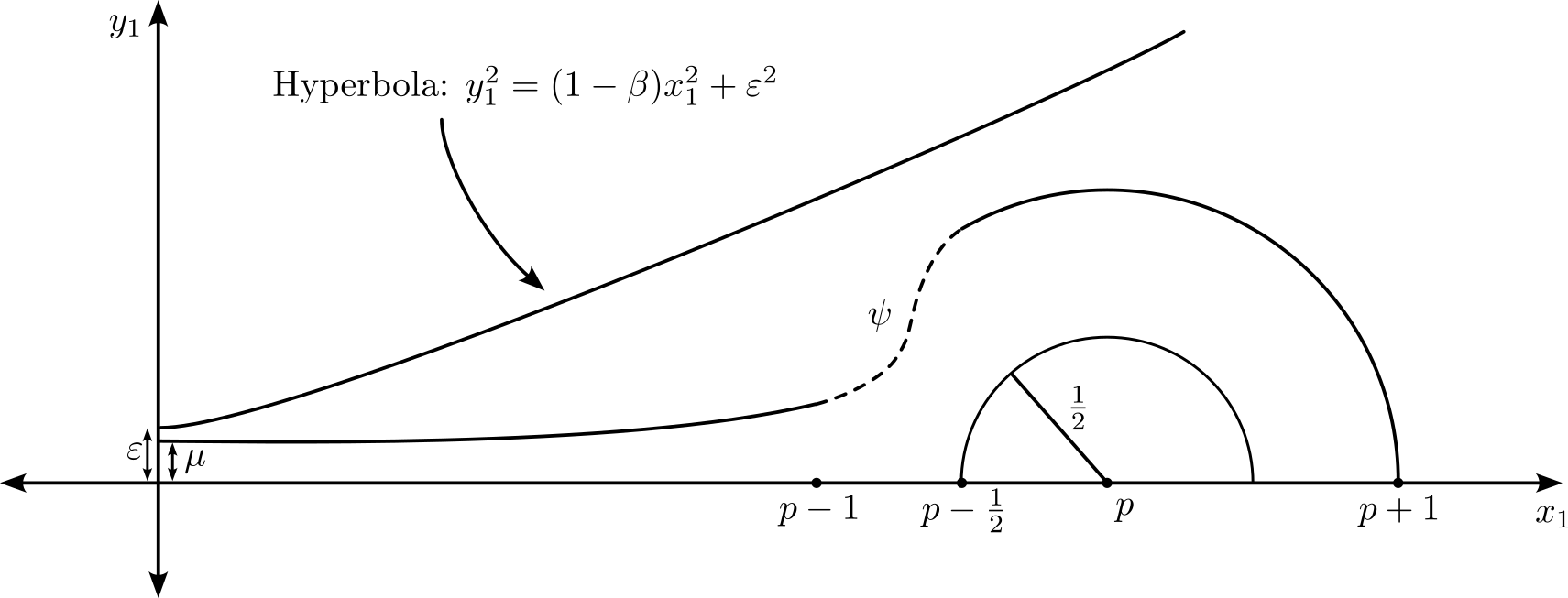}
        \caption{The profile curve \(u\) constructed in equation \eqref{eq:profile_curve}}
        \label{fig:profile_curve}
    \end{figure}

    By construction, the graph of \(u\) bounds the graph of \(g(x) = \sqrt{({1}/{2})^2 - (x-p)^2}\) from above, and this is independent of the choice of \(p,\ \alpha\) and \(\mu\). Now fix some \(2/3\leq \beta < 1\), and take any \(\varepsilon < \varepsilon_0\), where \(\varepsilon_0\) is determined by Proposition \ref{prop:singularity_at_origin}. We now adjust the parameters of \(u\) to ensure that the graph of \(h(x) = \sqrt{(1-\beta)x^2 + \varepsilon^2}\) lies above the graph of \(u\) for this choice of \(\beta\) and \(\varepsilon\). First pick \(p\) large enough that \(h(p-1) > 1\), then by the monotonicity of \(h\) and the fact that 1 is a global maximum for \(u\), 
    \begin{align}\label{eq:hyperbola_bound_1}
        u(x) \leq 1 < h(p-1) < h(x) \quad \textnormal{ for } p-1 < x \leq p.
    \end{align}
    Having made a choice of \(p\), pick \(\alpha\) and \(\mu\) small enough that \(\alpha(p-1)^2 + \mu < \varepsilon\), then 
    \begin{align}\label{eq:hyperbola_bound_2}
        u(x) \leq \alpha(p-1)^2 + \mu < \varepsilon = h(0) < h(x) \quad \textnormal{ for } 0\leq x\leq p-1.
    \end{align}
    Combining \eqref{eq:hyperbola_bound_1} and \eqref{eq:hyperbola_bound_2}, our choices of \(p,\mu\) and \(\alpha\) ensure that \(u(x) < h(x)\) for all \(0\leq x \leq p\), so we satisfy the assumptions of Proposition \ref{prop:non_vanishing}, which ensures that the flow is non-vanishing at the first singular time.

    Next, note that \(u\) satisfies the conditions of Lemma \ref{lem:mean_convexity}, so if \(\Sigma_u\) is not already mean convex, we can apply Lemma \ref{lem:mean_convexity} to find some \(\lambda>1\) such that the stretched hypersurface \(\Sigma_{u_\lambda}\) is mean convex. Moreover, if the graph of \(u\) lies below a hyperbola, then so does the graph of the rescaled function \(u_\lambda\). On the other hand, the graph of \(u_\lambda\) bounds the half circle of radius \(\frac{1}{2}\) and centre \((\lambda p, 0,0,0)\). Therefore our profile curve still satisfies the assumptions of Proposition \ref{prop:non_vanishing}, and additionally satisfies the mean convexity assumption of Proposition \ref{prop:singularity_at_origin}, so at the first singular time an isolated singularity forms at the origin. 
\end{eg}

%% file: 5_instant_smoothness.tex
In this section, we prove an instant smoothness result after the first singular time for some choice of curve \(u\) constructed according to Example \ref{eg:profile_curve}. To simplify notation, we set \({u}^*(x) = u(x,t_0)\), and take \(\Sigma_{{u}^*}\) as our new initial datum. Due to the analysis of Section \ref{sec:topological_change_example}, we know \(\Sigma_{{u}^*}\) is a smooth hypersurface away from the origin. We want to show that \(F_t(\Sigma_{u^*})\) instantly becomes smooth under the level set flow. To prove this, we approximate \(\Sigma_{u^*}\) by a non-decreasing sequence of smooth, mean convex hypersurfaces \(\Sigma_{u^j}\) and show that there exists \(T\) such that the second fundamental form of \(F_t(\Sigma_{u^j})\) is uniformly bounded on any time interval \([\delta,T]\) with \(\delta>0\). We then conclude using Remark \ref{rem:mean_convex_limit}. 

The fact that we need to take our approximating surfaces to be mean convex is subtle but crucial, and involves some slightly delicate analysis. We first state and prove the following bound on the second derivative of \(u\). This estimate will play a key role in our construction, as it provides sufficiently strong control for the positive terms in the expression for mean curvature to dominate the potentially negative contribution arising from the second derivative.

\begin{lem}\label{lem:u_xx_bound}
    Let \(M_0 = \Sigma_u\) be mean convex and satisfy hypothesis (H1) or (H2). For all \(0\leq t<t_0\) there exists some \(C>0\) such that
    \begin{align}\label{eq:u_xx_bound}
        \frac{u_{xx}}{1+u_x^2} \leq \frac{C}{1+C}\left(\frac{1}{u} - \frac{u_x}{x}\right).
    \end{align}
\end{lem}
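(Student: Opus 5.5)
The plan is to recast \eqref{eq:u_xx_bound} as a pointwise comparison between the mean curvature and the two rotational principal curvatures. With \(p=q=2\), \eqref{eq:principal_curvatures} gives
\begin{align*}
    \sqrt{1+u_x^2}\,H = -\frac{u_{xx}}{1+u_x^2} + \frac{1}{u} - \frac{u_x}{x},
    \qquad
    \sqrt{1+u_x^2}\,(\kappa_2+\kappa_3) = \frac{1}{u} - \frac{u_x}{x}.
\end{align*}
Write \(A = u_{xx}/(1+u_x^2)\) and \(B = 1/u - u_x/x\). Then \(\sqrt{1+u_x^2}\,H = B - A\). Since \(1 - C/(1+C) = 1/(1+C)\), the claim \(A \leq \tfrac{C}{1+C}B\) is equivalent to
\begin{align*}
    H \geq \frac{1}{1+C}\,(\kappa_2+\kappa_3)
\end{align*}
at every interior point of the profile curve, where \(u>0\) and \(x>0\) so that all the expressions are defined. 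This form is geometric: both sides are curvatures of \(M_t\), and the ambiguous factor \(\sqrt{1+u_x^2}\) has been divided out.

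First I would dispose of the case \(B\leq 0\). Mean convexity of \(M_t\) gives \(B - A > 0\), so \(A < B\). Because \(C/(1+C)<1\) and \(B\leq 0\), we have \(B \leq \tfrac{C}{1+C}B\), hence \(A < \tfrac{C}{1+C}B\) for every \(C>0\). So only the points with \(\kappa_2+\kappa_3>0\) need attention.

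For the remaining case, fix \(t<t_0\). On \([0,t]\) the flow is smooth and compact, so \(\sup_{M_t}|A_{M_t}|=:K<\infty\), where \(A_{M_t}\) denotes the second fundamental form. Mean convexity is preserved, and by the strong maximum principle applied to \((\pa_t-\Delta)H = |A_{M_t}|^2H\) it is strict: \(H>0\) on \(M_t\). Compactness then gives \(\min_{M_t}H =: h>0\). Therefore \(H \geq (h/K)\,|A_{M_t}|\) on \(M_t\). Moreover
\begin{align*}
    \kappa_2+\kappa_3 \leq |\kappa_2|+|\kappa_3| \leq \sqrt{2}\,|A_{M_t}|.
\end{align*}
These two bounds give \(H \geq \tfrac{h}{\sqrt2 K}(\kappa_2+\kappa_3)\) wherever \(\kappa_2+\kappa_3>0\). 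Choose \(C\) with \(1/(1+C) = h/(\sqrt2 K)\). Since \(h\leq \sqrt3 K\), this \(C\) is at most a harmless constant below zero, and we may enlarge \(C\) freely, because a larger \(C\) only weakens the requirement. Such a choice gives the inequality, and hence \eqref{eq:u_xx_bound}.

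I expect the main obstacle to be the justification that \(h>0\) and \(K<\infty\) are genuine geometric quantities on all of \(M_t\), including the points over \(x=0\) and \(u=0\). At those points the individual terms \(1/u\) and \(u_x/x\) degenerate, yet the curvatures of the smooth hypersurface \(M_t\) stay bounded, so the estimate is obtained on the whole compact surface before it is restricted to the interior of the profile curve. I would also remark that \(C\) depends on \(t\) and may degenerate as \(t\to t_0\). Keeping \(C\) bounded up to \(t_0\) would require a uniform estimate of the form \(H\geq c\,|A_{M_t}|\), which for mean convex flow holds, e.g.\ by White's results; I would note it as the route to a time-uniform version if that is needed later.
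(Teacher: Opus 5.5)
Your argument is correct for the lemma as literally stated, where \(C\) may depend on \(t\). It reaches the inequality by a different route from the paper and yields a weaker constant.

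The paper uses neither the reformulation \(H\geq\frac{1}{1+C}(\kappa_2+\kappa_3)\) nor a sign split on \(\frac1u-\frac{u_x}{x}\). It bounds the \(u_{xx}\)-term directly via \(\kappa_1^2\leq |A|^2\leq C^2H^2\). After multiplying by \(1+u_x^2\) and taking square roots (using \(H>0\)), this reads \(\frac{u_{xx}}{1+u_x^2}\leq C\bigl(-\frac{u_{xx}}{1+u_x^2}+\frac1u-\frac{u_x}{x}\bigr)\), and rearranging finishes.

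The substantive difference is where \(C\) comes from. The paper uses the pinching estimate \(|A|^2/H^2<C^2\) with \(C=C(M_0)\) uniform on \([0,t_0)\). This follows from the maximum principle, since the reaction terms in the evolution of \(|A|^2/H^2\) cancel, so nothing as deep as White's theory is needed. Your constant \(C=\sqrt2K/h-1\) instead uses \(K=\sup_{M_t}|A|\), which tends to infinity as \(t\to t_0\). Meanwhile \(h=\min_{M_t}H\) stays bounded: it is nondecreasing, and \(H\) is bounded away from the origin. So your constant genuinely blows up.

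This matters downstream. In Lemma \ref{lem:mean_convex_approximation} the inequality is passed to \(u^*=u(\cdot,t_0)\), which requires a \(t\)-independent \(C\). You flag this correctly, and the repair is immediate. Replace your fixed-time bound \(H\geq (h/K)|A|\) by \(|A|\leq C_0H\) with \(C_0=\max_{M_0}|A|/H\). Then \(\kappa_2+\kappa_3\leq\sqrt2\,C_0H\), and your route also gives a uniform constant, e.g. \(C=\max\{\sqrt2\,C_0-1,1\}\).

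In short, your approach gives a self-contained argument at each fixed time. The paper's gives a time-independent constant in one line, because bounding \(\kappa_1\) by \(|A|\) avoids any case distinction.
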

\begin{proof}
    For any mean convex mean curvature flow, it was shown by Huisken and Sinestrari that there exists uniform \(C=C(M_0)>0\) such that \(|A|^2/H^2< C^2\) on \(M_t\) for \(0\leq t< t_0\), where \(A\) denotes the second fundamental form \cite{Huisk99}. Using the expression for the principal curvatures given by \eqref{eq:principal_curvatures}, one finds that 
    \begin{align*}
        |A|^2 = \frac{u_{xx}^2}{(1+u_x^2)^3} + \frac{1}{u^2(1+u_x^2)} + \frac{u_x^2}{x^2(1+u_x^2)},
    \end{align*}
    and so
    \begin{align*}
        \frac{u_{xx}^2}{(1+u_x^2)^3} 
        \leq |A|^2 
        \leq C^2 H^2
        = C^2 \left(-\frac{u_{xx}}{(1+u_x^2)^{\frac{3}{2}}} + \frac{1}{u(1+u_x^2)^{\frac{1}{2}}} -\frac{u_x}{x(1+u_x^2)^\frac{1}{2}} \right)^2.
    \end{align*}
    Multiplying by \(1+u_x^2\) and taking a square root (using that \(H>0\) by mean convexity) we obtain
    \begin{align*}
        \frac{u_{xx}}{1+u_x^2} 
        \leq C \left(- \frac{u_{xx}}{1+u_x^2} + \frac{1}{u} - \frac{u_x}{x}\right) 
    \end{align*}
    Rearranging gives the result.
\end{proof}

\begin{lem}\label{lem:mean_convex_approximation}
    Let \(\Sigma_{u^*}\) be as before. Then \(\Sigma_{u^*}\) admits an approximation by smooth, mean convex hypersurfaces \(\Sigma_{u^j}\) represented by functions \(u^j\in C^0([\delta_02^{-(j+1)},\xi_2(t_0)])\cap C^\infty((\delta_0 2^{-(j+1)},\xi_2(t_0)))\) such that \(u^{j+1}\leq u^j \leq u^*\)for sufficiently small \(\delta_0>0\). Specifically, the approximation satisfies
    \begin{equation}\label{eq:approximation_property}
        u^j(x)
        \begin{cases}
            \leq {u}^*(x) & \textnormal{ if } {2^{-(j+1)}}\delta_0\leq x \leq 2^{-j}\delta_0 \\[2pt]
            = {u}^*(x) & \textnormal{ if } x > {2^{-j}}{\delta_0}. 
        \end{cases}
    \end{equation}
    Moreover, \(u_x^j>0\) whenever \(u^j \neq u^*\).
\end{lem}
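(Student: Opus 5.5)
We will build \(u^j\) by cutting \(u^*\) off near the origin and bending it down to meet the \(x_1\)-axis vertically. The new profile avoids \(P_2\), so \(\Sigma_{u^j}\cong \Sb^1\times\Sb^2\) satisfies (H1). The plan has four steps, in this order.

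\emph{Step 1: pass the curvature bound to \(t_0\).} By Lemma \ref{lem:differentiability}, \(u^*\) is \(C^1\) near \(0\), with \(u^*_x>0\) on some \((0,2\delta_0]\) and \(u^*_x(0)=0\). By Lemma \ref{lem:continuity}, \(u^*\) is smooth on \((0,\xi_2(t_0))\), and by Proposition \ref{prop:isolated_singularity} it is positive there. Since \(u(\cdot,t)\to u^*\) smoothly on compact subsets of \((0,\xi_2(t_0))\), I will pass the estimate \eqref{eq:u_xx_bound} to the limit \(t\to t_0\). The constant \(C\) comes from Huisken--Sinestrari and is uniform in \(t\), so on \((0,2\delta_0]\) we get
\begin{align*}
    \frac{u^*_{xx}}{1+(u^*_x)^2}\le \frac{C}{1+C}\Big(\frac{1}{u^*}-\frac{u^*_x}{x}\Big).
\end{align*}
This shows \(u^*\) is mean convex with a definite margin: the quantity \(\frac{1}{u}-\frac{u_x}{x}\) controls \(u_{xx}\) up to the factor \(\frac{C}{1+C}<1\).

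\emph{Step 2: the construction.} Set \(a_j=2^{-(j+1)}\delta_0\) and \(b_j=2^{-j}\delta_0\). Choose a cutoff \(\chi_j\) with \(\chi_j\equiv 1\) on \([b_j,\infty)\) and \(\chi_j\) vanishing at \(a_j\), and define \(u^j\) on \([a_j,b_j]\) by a modification of the form
\[
u^j(x)=u^*(x)\,\theta_j(x)
\quad\text{or}\quad
u^j(x)=\sqrt{(u^*)^2-\text{(correction)}},
\]
with the correction designed so that \(u^j\le u^*\), \(u^j(a_j)=0\), \(u^j_x(a_j)=+\infty\), and \(u^j=u^*\) for \(x\ge b_j\). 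A natural concrete choice is to glue a quarter-circle-like cap: take \(u^j(x)=\sqrt{f_j(x)}\), where \(f_j\) is increasing, \(f_j=(u^*)^2\) on \([b_j,\infty)\), and \(f_j(x)\approx c\,(x-a_j)\) near \(a_j\). This gives the vertical tangent and the smoothness condition \eqref{eq:avoiding_P2_smoothness}. Because \(u^*\) is increasing on \((0,2\delta_0]\), we can take \(f_j\) increasing, so \(u^j_x>0\) wherever \(u^j\neq u^*\).

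\emph{Step 3: monotonicity in \(j\).} To get \(u^{j+1}\le u^j\) as stated, and hence nested domains, I will define the family by a single rescaled profile, so that the \(f_j\) are ordered. Near \(a_j\), \(u^j=0\) while \(u^{j+1}\) vanishes at the smaller point \(a_{j+1}\), so the ordering must be checked there against the stated inequality. I would adjust conventions, essentially ordering the enclosed regions, so that the approximations form an inner approximation, which is what Remark \ref{rem:mean_convex_limit} requires.

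\emph{Step 4: mean convexity — the main obstacle.} We must keep \(H>0\) on the modified region. The terms \(\kappa_2=\frac{1}{u\sqrt{1+u_x^2}}\) and \(-\frac{u_x}{x\sqrt{1+u_x^2}}\) change under the modification, and \(\kappa_3<0\) since \(u_x>0\). I will write \(H\sqrt{1+u_x^2}=-\frac{u_{xx}}{1+u_x^2}+\frac1u-\frac{u_x}{x}\). For \(u=\sqrt f\), decreasing \(u\) increases \(\frac1u\), while the extra curvature from bending down near \(a_j\) is concave (\(u_{xx}<0\)) and so helps. The danger is the term \(u_x/x\), which can be large near \(a_j\). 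There \(u_x\to\infty\), but in the normalised form \(u_x/\sqrt{1+u_x^2}\le 1\), so this term is at most \(1/a_j\). Meanwhile \(\kappa_1\) of the cap is of order \(1/(\text{cap radius})\). Choosing the cap radius comparable to \(u^*(a_j)\le \varepsilon a_j\), using the flatness of Lemma \ref{lem:singularities_are_flat}, makes the cap's curvatures of order \(1/(\varepsilon a_j)\). These dominate \(1/a_j\), and this is where \(\delta_0\) small enters. In the transition region, the Step 1 inequality gives slack \(\frac{1}{1+C}\big(\frac1u-\frac{u_x}{x}\big)\). I will choose the interpolation with \(C^2\)-perturbation small relative to this slack, using that \(\frac1u-\frac{u_x}{x}\ge \frac{1}{2u}\) near \(0\) by flatness, since \(u_x/x\) is small compared with \(1/u\) there. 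Carrying this estimate uniformly in \(j\), via the scale invariance of the dyadic construction, is the delicate part.
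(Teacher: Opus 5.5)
Your plan is essentially the paper's proof. The paper caps \(u^*\) at \(a_j\) by \(p^j(x)=m_j(x/a_j-1)^{1/2}\) with \(m_j=u^*(a_j)\), which is your \(f_j=c(x-a_j)\) with \(c=m_j^2/a_j\le\varepsilon^2 a_j\). It glues this cap to \(u^*\) on \([\tfrac54 a_j,\tfrac74 a_j]\) using a cutoff whose derivatives are of order \(a_j^{-1}\) and \(a_j^{-2}\), and gets \(H>0\) because \(1/u^*\ge(\varepsilon b_j)^{-1}\) (from flatness), combined with \eqref{eq:u_xx_bound} passed to \(t_0\), dominates all \(O(\varepsilon/a_j)\) error terms uniformly in \(j\); the only difference is cosmetic, namely that the paper interpolates \(u\) linearly rather than \(u^2\). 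Your Step 3 concern is justified but needs no rescaled family: since \(u^{j+1}=u^*\) on \([a_j,\infty)\), the ordering that actually holds (and that Remark \ref{rem:mean_convex_limit} needs) is \(u^j\le u^{j+1}\le u^*\), i.e.\ increasing enclosed regions, so the inequality \(u^{j+1}\le u^j\) in the statement should be read in reverse.
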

\begin{proof}
    By Lemma \ref{lem:differentiability}, \(u^*\) is continuously differentiable and monotonic increasing in a neighbourhood of \(0\) and \(u^*(0) = u_x^*(0) = 0\). Therefore given \(\varepsilon>0\) to be determined later, there exists \(\delta_0\) such that
    \begin{align}\label{eq:epsilon_bounds}
        0 \leq u_x^*(x) < \varepsilon 
        && \textnormal{and} && 
        u^*(x) < \varepsilon x
    \end{align}
    for \(x\in[0,\delta_0]\). 
    
    To construct our approximation, we interpolate between \(u^*\) and an appropriately chosen function \(p^j\), defined on \([2^{-(j+1)}\delta_0,2^{-j}\delta_0]\). We require that \(\Sigma_{p^j}\) is smooth and mean convex, and also that \(p^j(2^{-(j+1)}\delta_0) = 0\) and \(p^j\leq m_j\) where \(m_j\) is the minimum value achieved by \(u^*\) on \([2^{-(j+1)}\delta_0,2^{-j}\delta_0]\). Due to the monotonicity of \(u^*\), this value is always achieved at \(2^{-(j+1)}\delta_0\), and so \(m^j \leq \varepsilon 2^{-(j+1)}\delta_0\). We claim that 
    \begin{align*}
        p^j(x) = m_j(2^{j+1}\delta_0^{-1}x -1)^{1/2}
    \end{align*}
    has these properties; all except mean convexity are immediate. To show mean convexity, we calculate
    \begin{align*}
        p_x^j = m_j 2^j\delta_0^{-1}(2^{j+1}\delta_0^{-1}x-1)^{-1/2} 
        && \textnormal{and} &&
        p_{xx}^j(x) = -m_j (2^j\delta_0^{-1})^2(2^{j+1}\delta_0^{-1}x-1)^{-3/2}.
    \end{align*} 
    Since \(p_{xx}^j\) is negative, it is enough to show that \(\frac{1}{p^j} - \frac{p_x^j}{x} > 0\). Using that \(m_j\leq \varepsilon 2^{-(j+1)}\delta_0\) and \(x\geq 2^{-(j+1)}\delta_0\), we obtain the bounds
    \begin{align*}
        p^j(x) \leq \varepsilon 2^{-(j+1)}\delta_0(2^{j+1}\delta_0^{-1}x - 1)^{1/2} 
        && \textnormal{and} &&
        \frac{p^j_x(x)}{x} \leq \varepsilon 2^j\delta_0^{-1}(2^{j+1}\delta_0^{-1}x-1)^{-1/2}.
    \end{align*}
    It follows that
    \begin{align*}
        \frac{1}{p^j(x)} - \frac{p_x^j(x)}{x} 
        > 2^j\delta_0^{-1}(2\varepsilon^{-1}- \varepsilon)(2^{j+1}\delta_0^{-1}x-1)^{-1/2} >0,
    \end{align*}
    provided \(\varepsilon\) is chosen small enough that \(2\varepsilon^{-1}-\varepsilon>0\).

    \begin{figure}[h]
        \centering
        \includegraphics[width = 0.85\textwidth]{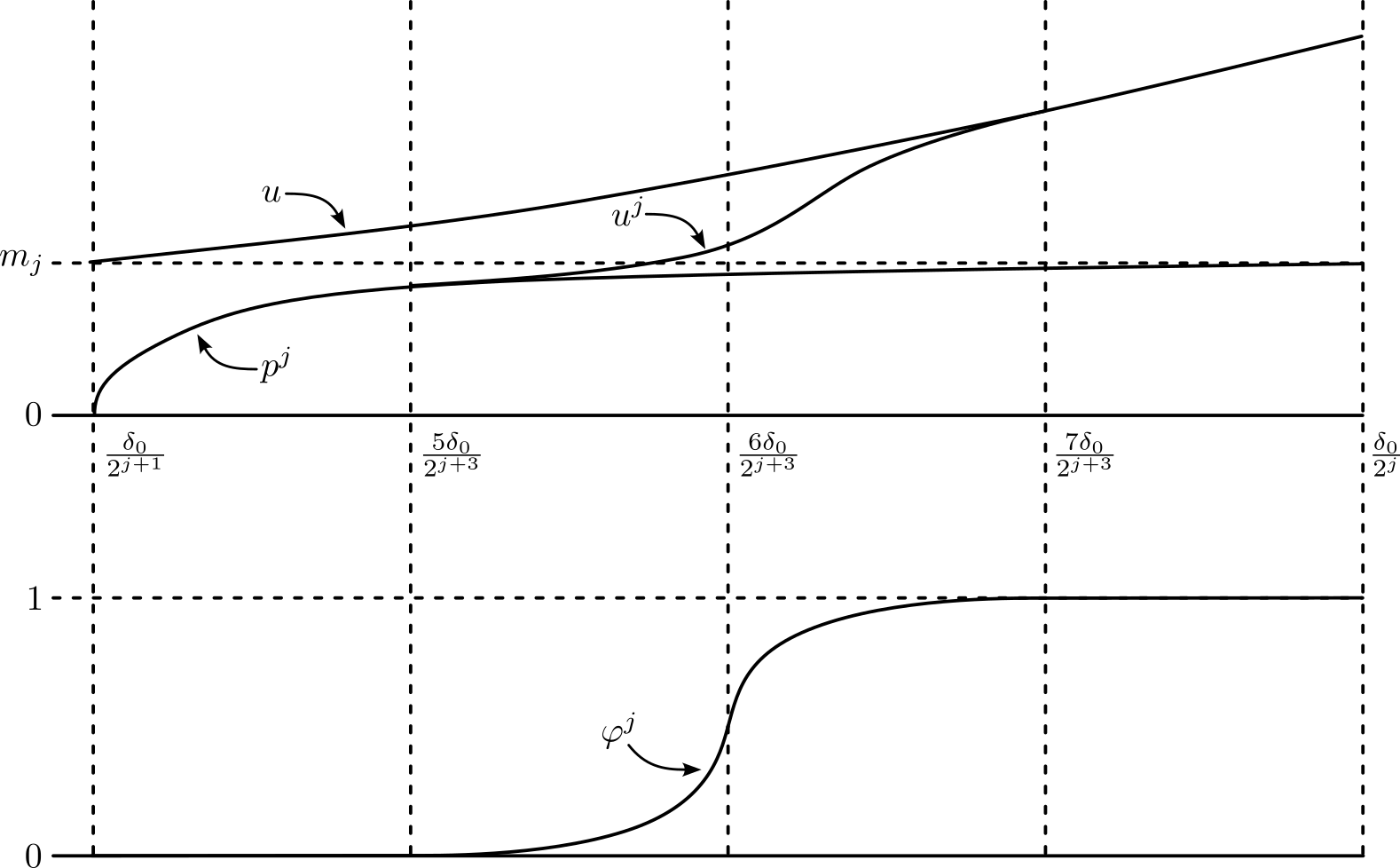}
        \caption{The interpolation function \(u^j\)}
        \label{fig:interpolation}
    \end{figure}

    Now consider any smooth, monotonic increasing function \(\varphi^j\) on \([2^{-(j+1)}\delta_0, \ 2^{-j}\delta_0]\) such that \(\varphi^j \equiv 0\) on \([2^{-(j+1)}\delta_0, \ 5\cdot 2^{-(j+3)}\delta_0]\), \(\varphi^j \equiv 1\) on \([7\cdot 2^{-(j+3)}\delta_0, \ 2^{-j}\delta_0]\), and \(0\leq \varphi^j \leq 1\), \(\varphi_x^j>0\) on \((5\cdot 2^{-(j+3)}\delta_0, \ 7\cdot 2^{-(j+3)}\delta_0)\). Additionally, \(\varphi^j\) should satisfy 
    \begin{align*}
        \sup_{x\in[5\cdot 2^{-(j+3)}\delta_0, 7\cdot 2^{-(j+3)}\delta_0]} \varphi_x^j(x) \leq C_1 2^{j+3}\delta_0^{-1}
        &&
        \sup_{x\in[5\cdot 2^{-(j+3)}\delta_0, 7\cdot 2^{-(j+3)}\delta_0]} |\varphi_{xx}^j(x)| \leq C_2 (2^{j+3}\delta_0^{-1})^2
    \end{align*}
    for some \(C_1, C_2 > 0\) independent of \(j\). there are many standard constructions of such a function. Define \(u^j\coloneqq \varphi^ju^* + (1-\varphi^j)p^j\) on \(x\in [2^{-(j+1)}\delta_0,2^{-j}\delta_0]\). Then \(u^j = p^j\) on \([2^{-(j+1)}\delta_0,\ 5\cdot 2^{-(j+3)}\delta_0]\) and \(u^j = u^*\) on \([7\cdot 2^{-(j+3)}\delta_0,\ 2^{-j}\delta_0]\). Thus we can set \(u^j = u^*\) on \([2^{-j}\delta_0, \ \xi_2]\) to ensure that \(u^j\) satisfies \eqref{eq:approximation_property}. See Figure \ref{fig:interpolation} for an illustration.
    
    It remains to show the mean convexity of \(u^j\) in the region \((5\cdot 2^{-(j+3)}\delta_0,\ 7\cdot 2^{-(j+3)}\delta_0)\). It suffices to show that 
    \begin{align*}
        -\frac{u_{xx}^j}{1+(u_x^j)^2} + \frac{1}{u^j} - \frac{u_x^j}{x} > 0.
    \end{align*}
    We start by finding lower bounds for each term individually. First, \(u^j \leq u^*\) by construction, so 
    \begin{align}\label{eq:second_term_bound}
        \frac{1}{u^j} \geq \frac{1}{u^*}.
    \end{align}
    Next, note that \(p_x^j\) achieves a maximum value of \(\varepsilon\) at \(5\cdot 2^{-(j+3)}\delta_0\), so
    \begin{align*}
        u_x^j = \varphi_x^j(u^*-p^j) + \varphi^ju_x^* + (1-\varphi^j)p_x^j
        \leq \varphi_x^ju^* + u_x^* + p_x^j \leq \varepsilon(8C_1 + 2)
    \end{align*} 
    where we have also used that \(u_x^*(x)\leq \varepsilon\) and \(u^*(x)\leq u^*(2^{-j}\delta_0)\leq \varepsilon 2^{-j}\delta_0\) (see \eqref{eq:epsilon_bounds}). Combined with the fact that \(x> 2^{-(j+1)}\delta_0\), it follows that
    \begin{align}\label{eq:third_term_bound}
        -\frac{u_x^j}{x}\geq -\varepsilon2^j\delta_0^{-1}C_1'
    \end{align}
    for \(C_1' = (8C_1+2)\). Finally, using that \(|p_{xx}^j|\) achieves a maximum value of \(4\varepsilon2^j\delta_0^{-1}\) at \(5\cdot 2^{-(j+3)}\delta_0\) as well as the upper bounds on \(u^*\) and \(u^*_x\) given by \eqref{eq:epsilon_bounds}, we have
    \begin{align*}
        u_{xx}^j 
        &= \varphi_{xx}^j(u^*-p^j) + 2\varphi_x^j(u_x^* - p_x^j) + \varphi^ju_{xx}^* + (1-\varphi^j)p_{xx}^j \\
        &\leq |\varphi_{xx}^j|(|u^*| + |p^j|) + 2|\varphi_{x}|(|u_x^*| + |p^j_x|) + (1-\varphi^j)|p_{xx}^j| + \varphi^ju_{xx}^* \\
        &\leq \varepsilon2^j\delta_0^{-1}(128 C_2 + 32C_1 + 4) + \varphi^ju_{xx}^*.
    \end{align*}
    It follows from Lemma \ref{lem:u_xx_bound} (which can be extended up to the singular time away from the singular set) that there exists \(C>0\) such that
    \begin{align*}
        u_{xx}^* \leq C(1+C)^{-1}(1+\varepsilon^2)\frac{1}{u^*},
    \end{align*}
    where we have also used that \(u_x^*\geq 0\) in the region we are considering. Therefore, 
    \begin{align*}
        u_{xx}^j \leq \varepsilon2^j\delta_0^{-1}C_2' + C(1+C)^{-1}(1+\varepsilon^2) \frac{1}{u^*},
    \end{align*}
    where \(C_2' = 128C_2 + 32 C_1 + 4\). The right hand side of this inequality is positive, so 
    \begin{align}\label{eq:first_term_bound}
        -\frac{u_{xx}^j}{1 + (u_x^j)^2} 
        \geq -\varepsilon2^j\delta_0^{-1}C_2' - C(1+C)^{-1}(1+\varepsilon^2)\frac{1}{u^*}.
    \end{align}
    Combining inequalities \eqref{eq:second_term_bound}, \eqref{eq:third_term_bound}, \eqref{eq:first_term_bound} and using that \(1/u^* \geq \varepsilon^{-1}2^j\delta_0^{-1}\), we have
    \begin{align*}
        -\frac{u_{xx}^j}{1+(u_x^j)^2} + \frac{1}{u^j} - \frac{u_x^j}{x} 
        &\geq \frac{1}{u^*}(1-C(1+C)^{-1}(1+\varepsilon^2)) - \varepsilon 2^j\delta_0^{-1}(C_1' + C_2')\\
        &\geq 2^j\delta_0^{-1}(\varepsilon^{-1}(1+C)^{-1} -\varepsilon(C(1+C)^{-1}+ C_1' + C_2')),
    \end{align*}
    which is positive for \(\varepsilon\) small enough. Note that our choice of \(\varepsilon\) depends only on \(C\), \(C_1\), \(C_2'\) and in particular can be chosen independently of \(j\). 
\end{proof}

To complete the proof of instant smoothness for \(\Sigma_{u^*}\), it remains to establish uniform derivative bounds for \(F_t(\Sigma_{u^j})\) on \([\delta,T)\) for any \(\delta>0\). Away from the singularity, we are able to apply a pseudolocality result. Close to the singularity we make use of the Evans-Spruck gradient estimates for graphical mean curvature flow. We recall both these theorems below, with brief notes on how they will be applied.

Loosely speaking, pseudolocality asserts that if the initial datum of a mean curvature flow is sufficiently regular in a given region, then one can obtain quantitative curvature bounds for a short time in a slightly smaller region. The exact statement is as follows.

\begin{prop}\label{prop:pseudolocality}
    There exist positive constants \(\beta_0\in (0,1)\) and \(C\) such that the following holds. Suppose \(\{M_t\}_{t\in[0,T]}\) is a regular mean curvature flow in \(B_{4r}(p)\), moreover assume that the initial surface \(M_0\) can be expressed as the graph of a function \(u\) over a plane. If \(\|u\|_{C^4}\leq \beta_0r^{-3}\), then \(|A(x,t)|\leq Cr^{-1}\) for all \(0\leq t\leq \min\{r^2\beta_0,T\}\) and \(x\in M_t\cap B_r(p)\).
\end{prop}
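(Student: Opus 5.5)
The plan is to reduce to the known pseudolocality theorem for mean curvature flow (Ecker--Huisken interior estimates combined with the local graphicality argument of Chen--Yin and Ilmanen--Neves--Schulze), rather than to prove anything genuinely new. First I would normalise by parabolic scaling. Replacing \(M_t\) by \(r^{-1}(M_{r^2t}-p)\) turns the flow into a regular flow in \(B_4(0)\) on \([0,T/r^2]\) and turns the curvature conclusion into \(|A|\leq C\) on \(B_1(0)\). In the rescaled picture the hypothesis \(\|u\|_{C^4}\leq\beta_0r^{-3}\) says that \(M_0\cap B_4\) is a graph with small derivatives. In particular \(|\nabla u|\leq\beta_0\) and \(|\nabla^2u|\leq\beta_0\) (and similarly for higher derivatives), after a harmless adjustment of constants, since the lower-order norms scale less favourably only by powers of \(r\) that can be absorbed when \(r\leq1\). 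So it suffices to treat \(r=1\) with \(M_0\cap B_4\) being \(\beta_0\)-close in \(C^2\) to a plane.

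The key step is to show that for \(0\leq t\leq\beta_0\) the flow stays a graph over the same plane with small gradient in \(B_2\). I would do this in two parts. For containment, I would use the avoidance principle with large shrinking spheres placed on either side of the almost-flat sheet. These show that \(M_t\cap B_3\) stays in a slab of width \(O(\beta_0+\sqrt{t})\) about the plane. To rule out extra sheets, I would use the regularity of the flow in \(B_4\) together with Huisken's monotonicity formula: the Gaussian density at scales \(\sqrt{\beta_0}\) is close to \(1\) initially, hence stays below \(1+\eta\). By White's local regularity theorem this then gives curvature bounds \(|A|\leq C\) on \(B_2\times(0,\beta_0]\) directly, provided \(\beta_0\) is small relative to White's \(\eta\).

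Once the curvature is bounded and the sheet is trapped in a thin slab, the unit normal is close to that of the plane. This gives the Ecker--Huisken gradient bound \(v=\langle\nu,\mathbf e\rangle^{-1}\leq 1+o(1)\) on \(B_{3/2}\). The Ecker--Huisken interior estimates \(|A|^2\leq C(t^{-1}+1)\sup v^4\), combined with the initial bound \(|A|\leq C\beta_0\), then remove the \(t^{-1}\) blow-up near \(t=0\) and give \(|A|\leq C\) on \(B_1\times[0,\min\{\beta_0,T\}]\). Scaling back yields \(|A|\leq Cr^{-1}\).

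The main obstacle is the middle step: ruling out that parts of \(M_t\) lying outside \(B_4\), or far from the initial graph, enter \(B_1\) in time \(\beta_0\). This is exactly the nonlocal content of pseudolocality. My first attempt would be the Gaussian density/White regularity argument sketched above, since it handles multiplicity and sheet entry simultaneously. Failing that, I would simply cite Chen--Yin's pseudolocality theorem, which is stated in precisely this form.
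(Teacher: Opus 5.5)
Your overall route matches the paper's. You rescale parabolically to \(r=1\) and quote a unit-scale pseudolocality theorem; the paper rescales Proposition 4.3 of Brendle--Huisken and notes that its \(\R^3\) proof works in any dimension. The gap is in the rescaling step itself. Translate \(p\) to the origin and set \(\tilde u(y)=r^{-1}u(ry)\). Then \(\nabla^k\tilde u(y)=r^{k-1}(\nabla^k u)(ry)\), so the hypothesis \(\|u\|_{C^4}\le\beta_0 r^{-3}\) only yields \(|\nabla^k\tilde u|\le\beta_0 r^{k-4}\). For \(r\le 1\) and \(k<4\) this is \emph{weaker} than \(\beta_0\), by factors \(r^{-1},r^{-2},r^{-3}\). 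The powers of \(r\) can be absorbed only when \(r\ge 1\), which is the opposite of the regime where the proposition is used: Case 2 of Proposition \ref{prop:uniform_derivative_bounds} takes \(r\) small. The gradient is scale invariant, so \(|\nabla\tilde u|\le\beta_0\) can only come from the hypothesis itself. Your reduction to ``a sheet \(\beta_0\)-close in \(C^2\) to a plane'' therefore does not follow from the assumptions.

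No better argument can repair this, because the statement read literally is false for small \(r\). Let \(M_0\) contain the cap of \(x_{n+1}=\tfrac a2|x|^2\) over \(\{|x|\le 4r\}\), with \(a=c\,\beta_0 r^{-3}\), \(c\) a small dimensional constant, and \(r\le\tfrac14\). Then \(|u|\le 8ar^2\), \(|\nabla u|\le 4ar\), \(|\nabla^2u|\le\sqrt n\,a\), and higher derivatives vanish, so \(\|u\|_{C^4}\le\beta_0 r^{-3}\). Yet already at \(t=0\) the vertex has \(|A|=\sqrt n\,a>Cr^{-1}\) once \(r^2<c\sqrt n\,\beta_0/C\).

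Rescaling a unit-scale theorem with hypothesis \(\|u\|_{C^4}\le\beta_0\) actually gives the scale-invariant hypothesis \(\sum_{k=0}^{4}r^{k-1}\sup|\nabla^k u|\le\beta_0\) on the part of the graph in \(B_{4r}(p)\). This is also what the paper's one-line proof tacitly uses. You should prove that version; for it your rescaling is exact and nothing needs absorbing. It is also what the application can supply. Write \(\Sigma_{u^*}\) locally as graphs over the tangent planes at the centres \(z_k\), with a uniform \(C^4\) bound \(M\). Then the weighted sum is at most \(CMr\), so choosing \(r\lesssim\beta_0/M\) suffices.

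Two smaller points. First, Chen--Yin's theorem is formulated for complete, properly embedded flows, whereas here the flow is only assumed regular in \(B_{4r}(p)\); the local unit-scale statement is the Brendle--Huisken proposition the paper cites. Second, your self-contained sketch is circular near \(t=0\). White's interior theorem only gives \(|A|\le C/\sqrt t\) there, and combined with a slab of width \(O(\beta_0+\sqrt t)\) that does not control the tilt. To make the sketch work you would need to keep the comparison balls inside \(B_4\), localise the monotonicity formula, and trap \(M_t\) within distance \(O(t)\) of \(M_0\) using unit balls tangent to \(M_0\) from both sides.
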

\begin{proof}
    The result follows from rescaling Proposition 4.3 of \cite{Brend16b}. This result is only stated for mean curvature flow in \(\R^3\), but the proof extends to arbitrary dimension without issue.
\end{proof}

Since we require curvature bounds that are independent of \(j\), we can only apply this result a fixed positive distance away from the origin where, by construction, the approximation \(\Sigma_{u^j}\) coincides with \(\Sigma_{u^*}\) for all \(j\) (see \eqref{eq:approximation_property}).

Next, the Evans-Spruck gradient estimates are estimates on the \(k\)\textsuperscript{th} derivatives of a smooth solution \(v\) to the full graph equation
\begin{align}\label{eq:full_graph_equation}
    v_t = \left(\delta_{ij}-\frac{v_{x_i}v_{x_j}}{1+|Dv|^2}\right)v_{x_ix_j}.
\end{align}
The estimates are as follows.

\begin{prop}[\cite{Evans92b}, {\cite[Corollary 2.3]{Altsh95}}]\label{prop:evans_spruck}
    For \(s_1 < s_2\), \(\rho>0\) and \(q\in \R^n\), set 
    \begin{align*}
        \Omega = B_\rho(q)\times (s_1,s_2).
    \end{align*}
    Suppose that \(v\in C^3(\Omega)\) solves the full graph equation in \(\Omega\) with \(M=\sup_\Omega|v|<\infty\). For any \(\varepsilon>0\), there is a constant \(C = C(M,\varepsilon,n)\) such that
    \begin{align*}
        |\nabla v|\leq C 
        \quad \textnormal{ on } \quad  
        \Omega_{\varepsilon} = B_{\rho-\varepsilon}(q)\times(s_1+\varepsilon^2,s_2).
    \end{align*}
\end{prop}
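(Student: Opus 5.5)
This estimate is cited from \cite{Evans92b} and \cite[Corollary 2.3]{Altsh95}, so the plan is to reproduce the Evans--Spruck interior gradient estimate. The idea is a maximum principle for a localised gradient quantity on the space-time cylinder \(\Omega\). Write \(w = \sqrt{1+|Dv|^2}\), which is the reciprocal of the normal component \(\nu\cdot \eb_{n+1}\) on the graph of \(v(\cdot,t)\), viewed as a mean curvature flow in \(\R^{n+1}\). The first step is the standard identity for this quantity along graphical mean curvature flow,
\begin{align*}
    \left(\frac{d}{dt}-\Delta\right) w = -|A|^2 w - \frac{2}{w}|\nabla w|^2,
\end{align*}
where \(\Delta\) and \(\nabla\) are taken on the evolving graph. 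By the same token, the height \(v\), regarded as the coordinate function \(x_{n+1}\) on the hypersurface, satisfies \((\frac{d}{dt}-\Delta)v = 0\), and \((\frac{d}{dt}-\Delta)|x-q|^2 = -2n + 2|\nabla x_{n+1}|^2 \ge -2n\) for the horizontal part.

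The second step is to build a space-time cutoff that uses the bound \(M=\sup_\Omega |v|\). Following Evans--Spruck, I would take
\begin{align*}
    \eta(x,t) = \left(\rho^2 - |x-q|^2 - 2n(t-s_1)\right)_+ \cdot \left(1 - \frac{v(x,t)}{2M}\right)\quad\text{(or a power of it)},
\end{align*}
and, after translating time, restrict to a short interval so that \(\eta\) vanishes on the parabolic boundary of \(\Omega\). Because \(|v|\le M\), the second factor lies in \([1/2, 3/2]\). The first factor is a supersolution-type weight whose support is compactly contained in the ball. To reach the full time interval \((s_1+\varepsilon^2,s_2)\), I would apply the estimate on translated cylinders \(B_\varepsilon(x_0)\times(t_0-\varepsilon^2/(2n),t_0]\) for each \((x_0,t_0)\in\Omega_\varepsilon\); these all fit inside \(\Omega\).

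The third step is to compute \((\frac{d}{dt}-\Delta)(\eta w)\) at an interior maximum of \(\eta w\), where \(\nabla(\eta w)=0\). At that point the cross term \(-2\nabla\eta\cdot\nabla w\) can be rewritten as \(2|\nabla\eta|^2 w/\eta\). The good term \(-\frac{2}{w}|\nabla w|^2\) from the evolution of \(w\), together with the key positive contribution coming from the \(v\)-dependence of the cutoff, must then absorb the error terms. This is where \(|\nabla v|^2 = |\nabla x_{n+1}|^2 = 1 - w^{-2}\) enters: it is close to \(1\) exactly when \(w\) is large. The resulting inequality should have the form \(0 \le -c\,(\eta w)^2/\eta + C(M,n,\varepsilon)\, w\) at the maximum, which gives \(\eta w \le C(M,\varepsilon,n)\). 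Finally, restricting to \(\Omega_\varepsilon\), where \(\eta\ge c\varepsilon^2\), yields \(|Dv|\le w\le C\).

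The main obstacle is the third step: choosing the exact power or exponential of the \(v\)-dependent factor, for instance \(\varphi(v)=e^{Kv}\) or \((1-v/2M)^{-1}\), so that the sign works out. The gradient term \(|\nabla \eta|^2/\eta\) from the spatial cutoff has to be dominated, and it is only controlled by \(w\) through \(|\nabla v|^2 = 1-w^{-2}\). This is the delicate computation in \cite{Evans92b}. In the paper's application one simply cites it, since the statement is exactly \cite[Corollary 2.3]{Altsh95}. Only \(C^3\) regularity is needed, to justify the evolution equation for \(w\).
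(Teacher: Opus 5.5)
The paper gives no proof of this statement; it quotes it from Evans--Spruck via Corollary 2.3 of Altschuler--Angenent--Giga. Your reduction to small cylinders $B_\varepsilon(x_0)\times(t_0-\varepsilon^2,t_0]\subset\Omega$, your final appeal to the citation, and your evolution identities are all fine. However, the argument you sketch for the estimate itself fails at the cutoff.

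The factor $(\rho^2-|x-q|^2-2n(t-s_1))_+$ is the Ecker--Huisken weight. It is largest on the initial slice and decreases in time. So $\eta$ does \emph{not} vanish on the parabolic boundary: on the bottom $B_\rho(q)\times\{s_1\}$ it equals $(\rho^2-|x-q|^2)(1-v/2M)>0$. Translating or shortening the time interval cannot change this. The maximum of $\eta w$ can then lie on the initial slice, where $w$ is not controlled by $M$ at all. At best this scheme reproduces the Ecker--Huisken bound in terms of the initial gradient. On your translated cylinders the weight becomes $(2n(t_0-t)-|x-x_0|^2)_+$, which vanishes exactly at $(x_0,t_0)$. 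So it gives no information at the point you care about, and the claimed lower bound $\eta\ge c\varepsilon^2$ on $\Omega_\varepsilon$ is false.

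The missing idea is the Korevaar-type mechanism of Evans--Spruck, which is exactly the step you defer. At an interior critical point of $\eta w$, the good term $-2\eta|\nabla w|^2/w$ cancels the cross term $-2\nabla\eta\cdot\nabla w$ exactly. This leaves $(\frac{d}{dt}-\Delta)(\eta w)\le w(\frac{d}{dt}-\Delta)\eta$, so everything rests on making $(\frac{d}{dt}-\Delta)\eta$ strictly negative wherever $\eta>0$ and $w$ is large.

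A weight vanishing on the whole parabolic boundary must increase from zero in time, and this puts a positive term into $(\frac{d}{dt}-\Delta)\eta$. That term has to be beaten by composing a height-dependent cutoff $\varphi$ with a convex function, for example $\eta=e^{K\varphi}-1$. This produces $-K^2e^{K\varphi}|\nabla\varphi|^2$, where $|\nabla\varphi|^2$ is bounded below in terms of $M$ at points where $w$ is large. This is where $|\nabla v|^2=1-w^{-2}$ genuinely enters. One then chooses $K$ large depending on $M$, $\varepsilon$ and $n$.

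Without this computation your proposal establishes the statement only through the citation. That is all the paper does, but your stated cutoff would not carry the argument.
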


Classical interior Hölder and Schauder estimates then give bounds on higher derivatives.

\begin{prop}[{\cite[Lemma 2.4]{Altsh95}}]\label{prop:evans_spruck_schauder}
    Let \(\Omega\) be as before, and suppose that \(v\in C^3(\Omega)\) solves the full graph equation in \(\Omega\) with \(M=\sup_\Omega |v|<\infty\) and \(N = \sup_\Omega|\nabla v|<\infty\). Then for \(\varepsilon>0\) and non-negative \(\ell \in \N\), there exists \(C' = C'(M,N,\varepsilon,n,\ell)\) such that 
    \begin{align*}
        \|v\|_{C^\ell(\bar{\Omega}_\varepsilon)}\leq C'.
    \end{align*}
\end{prop}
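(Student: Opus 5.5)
This statement is quoted from \cite[Lemma 2.4]{Altsh95}. I would prove it in the standard way, by treating the full graph equation \eqref{eq:full_graph_equation} as a linear parabolic equation whose coefficients are frozen along the solution and then bootstrapping. Write
\begin{align*}
a^{ij}(x,t) = \delta_{ij} - \frac{v_{x_i}v_{x_j}}{1+|Dv|^2}.
\end{align*}
Since \(|\nabla v|\le N\) on \(\Omega\), the eigenvalues of \(a^{ij}\) lie in \([(1+N^2)^{-1},1]\). So \(v\) solves \(v_t = a^{ij}v_{x_ix_j}\), a uniformly parabolic equation in nondivergence form, with ellipticity constants depending only on \(N\). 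Because the target domain \(\Omega_\varepsilon\) is only slightly smaller than \(\Omega\), I would fix a finite chain of nested parabolic cylinders \(\Omega=\Omega^0\supset\Omega^1\supset\dots\supset\Omega^{\ell+2}\supset\Omega_\varepsilon\). Consecutive cylinders are separated by a parabolic distance comparable to \(\varepsilon/(\ell+2)\), and one derivative is gained at each step.

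\textbf{Step 1 (Hölder gradient).} This is the main obstacle, since the coefficients are a priori only bounded and measurable in the \(x,t\) variables. I would obtain a \(C^{\alpha}\) bound on \(\nabla v\) as follows. Differentiate the equation in \(x_k\). Then \(w=v_{x_k}\) satisfies
\begin{align*}
w_t = \partial_{x_i}\bigl(a^{ij}w_{x_j}\bigr) - (\partial_{x_i}a^{ij})w_{x_j}.
\end{align*}
More cleanly, by the standard structure of the mean curvature flow graph equation, \(w\) solves a divergence-form equation with bounded measurable coefficients depending on \(Dv\). Nash--Moser/De Giorgi theory then gives \(\|\nabla v\|_{C^{\alpha}(\Omega^1)}\le C(M,N,\varepsilon,n)\). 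If I prefer to avoid the divergence-form rewrite, the Krylov--Safonov estimate applied to \(v\) itself yields \(C^{1,\alpha}\) estimates for \(v\) via the Krylov--Safonov--Ladyzhenskaya--Ural'tseva gradient Hölder theory, with the same dependence. Either route makes the coefficients \(a^{ij}\) Hölder continuous on \(\Omega^1\), with norm controlled by \(M,N,\varepsilon,n\).

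\textbf{Step 2 (Schauder bootstrap).} With \(a^{ij}\in C^{\alpha}\), interior parabolic Schauder estimates give
\begin{align*}
\|v\|_{C^{2+\alpha,1+\alpha/2}(\Omega^2)}\le C\bigl(\|a\|_{C^\alpha(\Omega^1)}\bigr)\,\sup_{\Omega^1}|v|\le C(M,N,\varepsilon,n).
\end{align*}
Next I differentiate the equation in \(x_k\): \(w=v_{x_k}\) satisfies the linear equation
\begin{align*}
w_t = a^{ij}w_{x_ix_j} + \bigl(\partial_{p_k} a^{ij}\bigr)(Dv)\,v_{x_ix_j}\,w_{x_k}\text{-type terms},
\end{align*}
whose coefficients are \(C^\alpha\) by the previous step. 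Schauder estimates on the next smaller cylinder then give \(w\in C^{2+\alpha}\), that is \(v\in C^{3+\alpha}\). Inductively, each further spatial derivative solves a linear equation whose coefficients and inhomogeneity are polynomial in lower-order derivatives that are already controlled. This yields \(\|v\|_{C^{\ell+\alpha}(\Omega^{\ell+2})}\le C'(M,N,\varepsilon,n,\ell)\). Time derivatives are recovered from the equation itself. The \(C^3\) hypothesis justifies differentiating once classically, and the higher steps are justified by the regularity just obtained, or by difference quotients.

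Since \(\Omega_\varepsilon\subset\Omega^{\ell+2}\), this proves the claim. Every constant depends only on the ellipticity (through \(N\)), the sup bound \(M\), the cylinder gaps (through \(\varepsilon\) and \(\ell\)) and \(n\), as required.
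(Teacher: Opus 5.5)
Your architecture (a Hölder bound for \(\nabla v\), then a Schauder bootstrap on nested cylinders) is what the paper relies on. The paper offers nothing beyond citing \cite[Lemma 2.4]{Altsh95} and ``classical interior Hölder and Schauder estimates'', and your Step 2 is fine. The problem is Step 1: you correctly identify it as the crux, but your first justification fails. Differentiating \(v_t=a^{ij}(Dv)v_{x_ix_j}\) gives \(w_t=a^{ij}w_{x_ix_j}+b^m w_{x_m}\) with \(b^m=(\partial_{p_m}a^{ij})(Dv)\,v_{x_ix_j}\). Your displayed identity \(w_t=\partial_{x_i}(a^{ij}w_{x_j})-(\partial_{x_i}a^{ij})w_{x_j}\) just rewrites \(a^{ij}w_{x_ix_j}\) and drops the term \(b^m w_{x_m}\). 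In any case both \(\partial_{x_i}a^{ij}\) and \(b^m\) involve \(D^2v\), so they are not bounded in terms of \(M,N\).

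No hidden divergence structure rescues this when \(n\ge 2\). The equation is not of the form \(v_t=\mathrm{div}\,A(Dv)\); that only happens for \(n=1\), where \(v_t=(\arctan v_x)_x\). Its natural divergence form is \(v_t/W=\mathrm{div}(Dv/W)\) with \(W=(1+|Dv|^2)^{1/2}\). Differentiating this gives \(w_t=W\partial_{x_i}\bigl(A^{ij}(Dv)w_{x_j}\bigr)+W^{-2}v_t\,v_{x_m}w_{x_m}\), whose drift again contains second derivatives of \(v\). So De Giorgi--Nash--Moser does not apply, and the paper uses the proposition with \(n=3\).

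Your second route names the right theorem but describes it wrongly. Krylov--Safonov applied to \(v\) itself only gives a \(C^\alpha\) bound on \(v\), which you already have from \(N\). What is needed is the Ladyzhenskaya--Ural'tseva interior Hölder gradient estimate for quasilinear equations of general form. References are Ladyzhenskaya--Solonnikov--Ural'tseva, Ch.~VI, or Lieberman's book, with Gilbarg--Trudinger, Ch.~13, for the elliptic version. Its mechanism handles exactly the drift above:
\begin{itemize}
\item Set \(\tilde L=\partial_t-a^{ij}\partial_{x_ix_j}-b^m\partial_{x_m}\). Then \(\tilde L v_{x_k}=0\) and \(\tilde L|Dv|^2=-2a^{ij}v_{x_ix_k}v_{x_jx_k}\le-2(1+N^2)^{-1}|D^2v|^2\).
\item For \(w=\pm\gamma v_{x_k}+|Dv|^2\), this coercive term absorbs \(|b^mw_{x_m}|\le C(N)|D^2v|\,|\nabla w|\) by Young's inequality. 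What remains is \(w_t-a^{ij}w_{x_ix_j}\le c(N)|\nabla w|^2\).
\item For \(\kappa\) large, \(e^{\kappa w}\) is then a subsolution of a linear equation with bounded measurable coefficients.
\item The Krylov--Safonov weak Harnack inequality then yields oscillation decay of \(Dv\), with constants depending only on \(N\), \(n\) and the cylinder.
\end{itemize}
For this specific equation, the Ecker--Huisken interior estimates would also work, since they bound \(|A|\) and its derivatives in terms of a gradient bound. With Step 1 replaced by either of these, your argument is complete.
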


The Evans-Spruck estimates apply only when the mean curvature flow can be written as a solution of the full graph equation over a uniform domain for a positive interval of time. To apply them we demonstrate it is possible to describe the portion of \(\Sigma_{u_j}\) close to the origin as a graph over some appropriately chosen subset of \(\{x_1=0\}\). To do so, we first need to show that the functions \(u^j(\cdot,t)\) have a well-defined inverse over a region \([0,\alpha_0]\) that is defined independently of \(t\) and \(j\). To make this more precise, we introduce the concept of an \(\alpha\)-invertible function, which is based on the definition of an \(\alpha\)-domain in \cite[Definition 2.1]{Altsh95}. However, here we choose to frame things in terms of the profile curve rather than the interior of the hypersurface. 

\begin{defn}\label{def:alpha_invertible}
    Suppose \(\Sigma_u\) satisfies hypothesis (H1), and fix \(\alpha>0\). We say that \(u\) is \emph{\(\alpha\)-invertible} if \(u = \rho\) at exactly two points for each \(0<\rho\leq \alpha\) and \(u_x\neq 0\) at these points.
\end{defn}

Crucially, this definition tells that the \emph{only} places where \(u\) can have a value less than \(\alpha\) are near its end-points. Moreover, it implies that there exist \(\delta_1\) and \(\delta_2\) such that the inverses of \(u|_{[\xi_1,\xi_1+\delta_1]}\) and \(u|_{[\xi_2-\delta_2,\xi_2]}\) are well defined functions \(v_1, v_2:[0,\alpha]\to \R_{\geq 0}\), which are smooth on \((0,\alpha]\) by the inverse function theorem. The following result shows that an \(\alpha\)-invertible profile curve remains \(\alpha_0\)-invertible for some positive amount of time and some \(\alpha_0<\alpha\).

\begin{lem}\label{lem:alpha_invertible}
    Suppose \(M_0 = \Sigma_u\) is generated by an \(\alpha\)-invertible function \(u\), and suppose that the corresponding mean curvature flow has first singular time \(t_0\). Then there exists \(t_\alpha\in (0,t_0]\) such that \(u(\cdot,t)\) is \(\alpha(t)\)-invertible for \(0<t<t_\alpha\), where \(\alpha(t) = \sqrt{\alpha^2-2t}\). Moreover, if \(\tilde{u}\geq u\) is another \(\alpha\)-domain satisfying the same assumptions as \(u\) and with first singular time \(\tilde{t}_0\), then \(\tilde{u}(\cdot,t)\) is \(\alpha(t)\)-invertible for \(0<t<t_\alpha\), where \(t_\alpha\) is as before.
\end{lem}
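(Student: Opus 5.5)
We compare with shrinking cylinders. The function \(c(t)=\sqrt{\alpha^2-2t}\) solves \eqref{eq:evolution_equation} as an \(x\)-independent solution, since \(c_t=-1/c\) and the derivative terms vanish. Geometrically, \(\{(\bar x,\bar y):|\bar y|=c(t)\}\) is a shrinking \(\R^2\times\Sb^1\) flow. The plan is to count intersections of \(u(\cdot,t)\) with the levels \(c_\rho(t)=\sqrt{\rho^2-2t}\) using the Sturmian theorem (Theorem \ref{thm:sturmian_theorem}, Remark \ref{rem:curvilinear}), exactly as in the proof of Proposition \ref{prop:singularity_at_origin}.

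\textbf{Counting level crossings.} Fix \(0<\rho\le\alpha\). At \(t=0\), \(u-\rho\) has exactly two zeros, both simple, by \(\alpha\)-invertibility. We want to show that \(u(\cdot,t)-c_\rho(t)\) has at most two zeros on \((\xi_1(t),\xi_2(t))\) for \(t<t_\alpha\), and at least two, because \(u\) vanishes at the endpoints and exceeds \(c_\rho(t)\) somewhere. For the Sturmian theorem we need boundary points where \(u\neq c_\rho\) for all times.

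\textbf{Choosing \(t_\alpha\) and the boundary points.} Pick interior points \(a_1<a_2\) with \(u(a_i,0)>\alpha\) and all of \(\{u\le\alpha\}\) near the endpoints lying outside \([a_1,a_2]\). By continuity of the smooth flow, there is \(t_\alpha>0\) such that \(u(a_i,t)>\alpha\ge c_\rho(t)\) for \(t<t_\alpha\). On \([a_1,a_2]\) the curve may go up and down, but for \(t\) small it stays above \(\alpha\ge c_\rho\), again by continuity, after shrinking \(t_\alpha\). So no zeros occur there.

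On the end segments \([\xi_1(t),a_1]\) and \([a_2,\xi_2(t)]\) I would not apply Sturm directly, because the endpoint is degenerate (\(u=0\), \(u_x=\pm\infty\)). Instead I would use the inverse graphs \(v_1,v_2\) over the \(y_1\)-axis, which solve a uniformly parabolic equation near \(y=0\). There I would apply the Sturmian count with a slightly interior boundary, or equivalently use that near \(P_1\) the curve is a smooth graph over the \(y\)-axis with \(u_x\neq0\) preserved for short time by smoothness. Combining gives exactly one crossing on each side. Simplicity of the crossings, \(u_x\ne0\), follows from the dropping clause of the Sturmian theorem: a multiple zero would reduce the count below two, which is impossible.

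\textbf{Second claim.} For \(\tilde u\ge u\), the containment \(\Sigma_u\subset\) interior of \(\Sigma_{\tilde u}\) is preserved by avoidance. The boundary points \(a_i\) still satisfy \(\tilde u(a_i,t)\ge u(a_i,t)>c_\rho(t)\) for \(t<t_\alpha\), with the same \(t_\alpha\). The middle region also stays above \(c_\rho\) by comparison with \(u\). The same counting therefore applies.

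\textbf{Main obstacle.} The main difficulty is the end segments. There I must rule out a new local minimum dipping below \(c_\rho\) near the endpoint, uniformly in \(\tilde u\). The first approach I would try is the Sturmian theorem on the \(v_i\) formulation, with \(y\)-boundary at \(y=0\) handled by the reflection symmetry across \(P_1\).
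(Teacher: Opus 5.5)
The core of your approach matches the paper: compare with the levels \(c_\rho(t)=\sqrt{\rho^2-2t}\), bound crossings above by Theorem \ref{thm:sturmian_theorem} and below by the intermediate value theorem, and get simplicity from the dropping clause. Handling \(a_1,a_2\) and \([a_1,a_2]\) by continuity of \(u\) together with \(\tilde u\ge u\) is a fine substitute for the paper's ball barrier.

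\textbf{The end segments.} This is where the lemma has its content, and here the proposal has a gap. Your fallback (\(u_x\neq0\) near \(P_1\) is preserved for short time by smoothness) does prove the first claim; it even shows \(u(\cdot,t)\) stays \(\alpha\)-invertible for small \(t\). But the time it produces depends on the particular flow. So it cannot serve every \(\tilde u\ge u\) at once, which is what the second claim needs, and what the use for the \(u^j\) in Proposition \ref{prop:uniform_derivative_bounds} needs.

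Your uniform route, Sturm in the \(v_i\) formulation, counts the wrong thing. The cylinder's profile \(y_1=c_\rho(t)\) is not a graph over the \(y_1\)-axis, so there is no difference function to count zeros of. Moreover, a graph \(x_1=v_i(y_1)\) meets each level exactly once automatically. The event you must exclude is a fold with \(u_x=0\) creating extra crossings, and in that picture it appears as \(v_i\) ceasing to be a graph, not as a zero.

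What works is Sturm in the \(x\)-variable for \(\tilde u(\cdot,t)-c_\rho(t)\) on \([\tilde\xi_1(t),a_1]\) and \([a_2,\tilde\xi_2(t)]\). Fix \(\rho\) and \(t'<\min\{t_\alpha,\tilde t_0\}\). Every zero on \([0,t']\) sits at height \(c_\rho(t)\ge c_\rho(t')>0\). By smoothness of this one flow, there is \(\eta>0\) with \(\tilde u<c_\rho(t')\) on \((\tilde\xi_1(t),\tilde\xi_1(t)+\eta)\) for all \(t\le t'\). Replace the degenerate endpoint by a boundary in this strip; taking it fixed on short time intervals sidesteps the analyticity requirement in Remark \ref{rem:curvilinear}. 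The theorem then applies.

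That \(\eta\) depends on \(\tilde u\) is harmless, because only monotonicity of the count is used. The initial count is exactly one per side, by \(\alpha\)-invertibility of \(\tilde u\) together with \(\tilde u\ge u>\alpha\) on \([a_1,a_2]\). The paper runs this count on all of \([\xi_1(t),\xi_2(t)]\) at once, so it never needs your separate middle-region step.

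\textbf{The existence time.} You never show that \(\tilde u\) stays smooth on \((0,t_\alpha)\), i.e.\ that \(\tilde t_0\ge t_\alpha\). Your argument gives the conclusion only on \((0,\min\{t_\alpha,\tilde t_0\})\), and comparison with \(u\) cannot supply more. Consider an \(\alpha\)-invertible \(\tilde u\ge u\) whose left end is a steep wall at a small \(x=\eta\). It has \(H\approx-1/\eta\) there, and its inner \(\Sb^1\) factor (of radius about \(\eta\)) collapses onto \(P_2\) after a time of order \(\eta^2\).

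The paper rules this out using mean convexity, which its proof invokes even though the statement does not list it. It takes \(t_\alpha\le\varepsilon^2/6\) for a ball of radius \(\varepsilon\) inside \(\Sigma_u\) above height \(\alpha\), so no \(\Sigma_{\tilde u}\) vanishes before \(t_\alpha\). Proposition \ref{prop:bubblesheet} then confines any singularity to interior zeros of \(\tilde u\) on \(P_1\), which \(\alpha(t)\)-invertibility excludes. Your bound \(\tilde u\ge u>\alpha\) on \([a_1,a_2]\) already gives the non-vanishing, but you still need this second step.
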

\begin{proof}
    Since \(u> \alpha\) away from its endpoints, for small enough \(\varepsilon>0\) we can place a ball of radius \(\varepsilon\) between \(\Sigma_u\) and \(\Sigma_\alpha\). Now take \(t_\alpha = \min \{\varepsilon^2/6, \ \alpha^2/2\}\). If \(\tilde{u}\geq u\), then the same ball will be contained between \(\Sigma_{\tilde{u}}\) and \(\Sigma_\alpha\), so the following arguments hold in this case as well.
    
    We first show that \(u(\cdot,t)\) is \(\alpha(t)\)-invertible for \(t\leq  \min \{t_0,t_\alpha\}\). Given any \(t' < \min \{t_0,t_\alpha\}\), consider some \(0< \rho \leq \alpha(t')\), and define \(\rho(t) = \sqrt{\rho^2 - 2(t-t')}\), so that \(\rho(0) = \sqrt{\rho^2 + 2t'}\leq \alpha\) and \(\rho(t') = \rho\). Due to the spherical barrier, we must have \(u(\cdot,t') = \rho(t')\) at least twice. On the other hand, since \(u(\cdot,0)\) is equal to \(\rho\) twice for all \(\rho< \alpha\), it follows from the Sturmian theorem \ref{thm:sturmian_theorem} (and Remark \ref{rem:curvilinear} to deal with the end-points), that \(u(\cdot,t')\) is equal to \(\rho(t')\) at most twice. Therefore they are equal at exactly two points, and since there was no decrease in the number of zeros of \(u(\cdot,t) - \rho(t)\), the zeros must all be of order 1, so \(u_x(\cdot,t')>0\) at these points. 

    Finally, we want to show that \(t_\alpha \leq t_0\). Suppose not, then \(\Sigma_{u_t}\) becomes singular before \(u(\cdot,t)\) ceases to be \(\alpha(t)\)-invertible. The spherical barrier means that \(\Sigma_{u_t}\) cannot vanish at its first singular time \(t_0\) (since this time is less than \(t_\alpha \leq \varepsilon^2/6\) by assumption). Additionally, since \(\Sigma_u\) is mean convex, Proposition \ref{prop:bubblesheet} tells us that the singularity can only form when \(u=0\), but not at \(\xi_1(t_0)\) and \(\xi_2(t_0)\). However, for each \(t<t_0\), \(u(t)\) is \(\alpha(t)\)-invertible with \(\alpha(t)\geq \alpha(t_0) > \alpha(t_\alpha) \geq 0\), and so we can't have \(u = 0\) away from an end-point. Thus we must have \(t_0\leq t_\alpha\).
\end{proof}

We are now in a position to establish the following.

\begin{prop}\label{prop:uniform_derivative_bounds}
    Let \(\Sigma_{u^*}\) be as before, and let \(\Sigma_{u^j}\) be the mean convex approximation given by Lemma \ref{lem:mean_convex_approximation}. Then there exists \(T\) such that for any \(\delta \in (0,T)\), the second fundamental form of \(F_t(\Sigma_{u^j})\) and its derivatives are uniformly bounded for \(j\geq 1\) and \(\delta < t < T\).
\end{prop}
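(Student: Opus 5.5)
We split the hypersurface into a region near the origin and a region away from it, and obtain $j$-independent curvature bounds on each by different tools. Fix a small $\delta_0$ as in Lemma \ref{lem:mean_convex_approximation}. Each $\Sigma_{u^j}$ is smooth, mean convex and avoids $P_2$. Its profile curve near the left end-point is steep, since $u^j=p^j$ there with $p^j_x\to\infty$ at $2^{-(j+1)}\delta_0$. By Lemma \ref{lem:differentiability}, $u^*_x>0$ near $0$, and by construction $u^j_x>0$ wherever $u^j\neq u^*$. Hence there is $\alpha>0$, independent of $j$, such that every $u^j$ is $\alpha$-invertible; the only point to check is that $u^*>\alpha$ away from a small neighbourhood of $0$ and of $\xi_2(t_0)$. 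Moreover $u^j\ge u^{j+1}$ by construction, and the ball used in Lemma \ref{lem:alpha_invertible} can be placed below the smallest of them. That lemma therefore gives a single time $t_\alpha>0$ such that, for all $j$, $u^j(\cdot,t)$ is $\alpha(t)$-invertible for $t<t_\alpha$, and the flow of $\Sigma_{u^j}$ stays smooth on $[0,t_\alpha)$. We take $T<t_\alpha$ and also $T\le r^2\beta_0$ for the $r$ chosen below. Since each $\Sigma_{u^j}$ is mean convex and smooth on this interval, its level set flow coincides with the smooth flow, so it suffices to bound the classical flows.

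\emph{Away from the origin.} On the set $\{x\ge 2^{-1}\delta_0\}$ every $\Sigma_{u^j}$ coincides with $\Sigma_{u^*}$ by \eqref{eq:approximation_property}, and this part of $\Sigma_{u^*}$ is smooth by Lemma \ref{lem:continuity}, including near the right crossing point. By compactness and the $SO(2)\times SO(2)$ symmetry we can choose $r>0$ so that around every point of $\Sigma_{u^*}\cap\{|\bar x|\ge\delta_0\}$ the surface $\Sigma_{u^*}\cap B_{4r}$ is a graph with $C^4$ norm at most $\beta_0r^{-3}$, and these balls avoid $\{|\bar x|<\delta_0/2\}$. Proposition \ref{prop:pseudolocality} then gives $|A|\le Cr^{-1}$ on $B_r$ for $t\le\min\{r^2\beta_0,T\}$, uniformly in $j$. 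For higher derivatives we use standard interior estimates for mean curvature flow with bounded curvature (Ecker--Huisken type), which give bounds on $\nabla^kA$ for $t\ge\delta$.

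\emph{Near the origin.} For $|\bar x|\le\delta_0$, by $\alpha(t)$-invertibility the profile curve in $\{y_1\le\alpha(t)\}$ near the left end is the graph $x=v^j(y,t)$ of the inverse function over $y\in[0,\alpha(t)]$. Rotating, the portion of $\Sigma_{u^j(\cdot,t)}$ with $|\bar y|<\alpha/2$ and $|\bar x|$ small is a union over the circle of graphs over the $\bar y$-plane. Working locally, in a ball around a point $(x_1,0,\bar y)$ with $x_1>0$ we write the surface as a graph $x_1=w^j(x_2,\bar y,t)$ over a small ball in the three-dimensional hyperplane $\{x_1=0\}$, with $w^j=\sqrt{v^j(|\bar y|,t)^2-x_2^2}$ for $|x_2|$ small compared with $v^j$. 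The problem is that $v^j$ may be close to $0$, which makes this chart degenerate. Instead we exploit that $v^j\le\delta_0$ is uniformly bounded: the surface near the origin, viewed as a graph over the $\bar y$ directions with the angular coordinate added, solves the full graph equation \eqref{eq:full_graph_equation} with $\sup|w^j|\le\delta_0$ on a cylinder $B_\rho\times(0,T)$ whose size is independent of $j$ and $t$. Proposition \ref{prop:evans_spruck} and Proposition \ref{prop:evans_spruck_schauder} then give uniform $C^\ell$ bounds for $t>\delta$ on a slightly smaller cylinder, and hence uniform bounds on $A$ and its derivatives there.

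The main obstacle is producing a uniform graphical chart near the origin. The domain of each $u^j$ starts at $2^{-(j+1)}\delta_0\to0$, so the hole in $P_1$ shrinks and a naive graph over $\{x_1=0\}$ fails near $\bar x=0$. I would first try graphing over $\{x_1=0\}$ only in the half-space $x_1>0$ with $|x_2|<|\bar y|$-dependent bounds, using the symmetry to cover all angles. If that degenerates, I would instead use the graph $|\bar x|=v^j(|\bar y|,t)$ and the rotationally symmetric one-dimensional equation for $v^j$. This equation is uniformly parabolic and has bounded solution, which gives interior gradient bounds away from $\bar y=0$; near $\bar y=0$ the surface is far from the singular point and pseudolocality applies. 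Finally, the overlap of the two regions (near and away from the origin) is covered by choosing $\delta_0$, $r$ and $\rho$ compatibly.
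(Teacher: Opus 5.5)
Your decomposition matches the paper's: uniform $\alpha$-invertibility via Lemma \ref{lem:alpha_invertible}, pseudolocality where $u^j=u^*$, and Evans--Spruck near the origin. You also name the right obstacle. But you never supply the ingredient that removes it, so the near-origin step does not go through.

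\textbf{The chart near the origin.} There the surface is a graph $x_1=w^j=\sqrt{v^j(|\bar y|,t)^2-x_2^2}$ over $\{x_1=0\}$ only on the set $\{|x_2|<v^j\}$, where the sheets $x_1=\pm w^j$ meet. So a ball of radius $\rho$ in $\{x_1=0\}$ is an admissible domain for Proposition \ref{prop:evans_spruck} only if $\rho<\inf v^j$. The bound $\sup|w^j|\le\delta_0$ is irrelevant to this, because \eqref{eq:full_graph_equation} must hold on the whole cylinder $B_\rho\times(s_1,s_2)$. At $t=0$ the left end-point of $u^j$ sits at $2^{-(j+1)}\delta_0\to 0$. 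Hence your claim that the equation holds on a $j$-independent cylinder $B_\rho\times(0,T)$ is false.

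\textbf{The missing estimate.} You need a quantitative, $j$-independent statement that the hole around $P_2$ opens: $|\bar x|\ge c(\delta)>0$ on $F_t(\Sigma_{u^j})$ for $t\ge\delta$. The paper gets this from Lemma \ref{lem:hyperboloid_comparison} with $\varepsilon=0$. First it arranges, by stretching and using that each $\Sigma_{u^j}$ lies inside $\Sigma_{u^*}$, that every $\Sigma_{u^j}\subset\{(1-\beta)|\bar x|^2\ge|\bar y|^2\}$. The lemma then gives $F_t(\Sigma_{u^j})\subset\{(1-\beta)|\bar x|^2\ge|\bar y|^2+\beta t\}$, so $|\bar x|\ge\sqrt{\beta\delta/(1-\beta)}$ for $t\ge\delta$, uniformly in $j$. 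With this, the graphs are defined over balls of radius about $\beta\delta$ in $\{x_1=0\}$, independent of $j$ and of $t\in[\delta,T)$. Propositions \ref{prop:evans_spruck} and \ref{prop:evans_spruck_schauder} then apply on $[\delta,T)$, not on $(0,T)$.

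\textbf{Why your fallbacks fail.} The reduced equation for $v^j$ is $v_t=\frac{v_{yy}}{1+v_y^2}+\frac{v_y}{y}-\frac{1}{v}$.
\begin{itemize}
\item It is not \eqref{eq:full_graph_equation}, so Evans--Spruck does not apply to it.
\item Its zeroth-order term $-1/v$ is singular exactly where you need control.
\item It is not uniformly parabolic: already at $t=0$, $v^j_y=1/u^*_x>\varepsilon^{-1}$ wherever $u^j=u^*$ on $[0,\delta_0]$, by \eqref{eq:epsilon_bounds}.
\end{itemize}
Mean convexity alone gives $v^j(y,t)\ge v^j(y,0)\ge (u^*)^{-1}(y)$, which confines the degeneration to a neighbourhood of the left crossing point $y=0$. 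Exactly there, your claim that the surface is ``far from the singular point'' is unjustified. The left crossing point starts at distance $2^{-(j+1)}\delta_0$ from the origin, where $|\kappa_3|=1/x=2^{j+1}\delta_0^{-1}$. The initial $C^4$ norms are therefore unbounded in $j$, and Proposition \ref{prop:pseudolocality} gives no uniform estimate. Being away from the origin at positive times is precisely the barrier estimate you are missing.

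\textbf{A smaller omission.} In the pseudolocality region you also need the displacement bound $\dist(w,\Sigma_{u^j})\le\sqrt{6t}$. This keeps the evolving surface inside the balls $B_r$ on which the curvature bound holds.
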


\begin{proof}
    It follows from Lemma \ref{lem:differentiability} that \(u^*_x >0\) in a neighbourhood of the origin and from \eqref{eq:crossing_P2_smoothness} that \(u^*_x<0\) in a neighbourhood of \(\xi_2\), so the approximation \(u^j\) is \(\alpha\)-invertible for some \(\alpha\) uniform in \(j\). It then follows from Lemma \ref{lem:alpha_invertible} that there exist \(\alpha_0, \ t_0>0\) such that all the \(u^j\)s are \(\alpha_0\)-invertible for \(0<t< t_0\). Recall that we want to split our analysis into two regions, near to and away from the origin. Near to the origin, we argue using the Evans-Spruck gradient estimates, and away from the origin we argue using pseudolocality. In order to patch these two arguments together, we need to ensure that the domains on which we are applying these two arguments overlap on some section of the profile curve.

    To this end, let \(p\) be such that \(u^*(p) = \alpha_0\). Then for small enough \(\varepsilon_0\), we can place a sphere of radius \(\varepsilon_0\) in the interior of \(\Sigma_u\) and this containment holds with \(\Sigma_{u^j}\) replacing \(\Sigma_{u^*}\), up to discarding finitely many \(\Sigma_{u^j}\)s. Then by the avoidance principle, the sphere of radius \(\varepsilon(t) = \sqrt{\varepsilon_0^2-6t}\) remains in the interior of \(F_t(\Sigma_{u^j})\) for \(0\leq t\leq \min\{t_0,\varepsilon_0^2/6\}\). Let \(p_{\varepsilon_0} = p - \varepsilon_0/2\), then \(\varepsilon(t)\geq {\varepsilon_0}/\sqrt{2}\) for \(0\leq t \leq \min\{t_0,\varepsilon_0^2/12\}\eqqcolon T\), which implies that \(\xi^j_1(t) < p_{\varepsilon_0}\), where \(\xi^j_1(t)\) is the left end-point of \(u^j\) at time \(t\). On the other hand, it follows from mean convexity of the flow that for \(x\in (p_{\varepsilon_0},p)\) and any \(j\geq 1\), we must have \(u^j(x,t)<\alpha_0\) for \(t< T\). Let
    \begin{align*}
        X = \{(\bar{x},\bar{y})\in P_0:0\leq x_1\leq p, \ 0\leq y_1 < \alpha_0\} 
        && \textnormal{and} &&
        Y = \{(\bar{x},\bar{y})\in P_0: x_1 \geq p_{\varepsilon_0}\}
    \end{align*}
    then by the above reasoning some portion of the graph of \(u^j(x,t)\) must lie within \(X\cap Y\). Due to this overlap, to obtain a uniform bound on all derivatives of the second fundamental form of \(F_t(\Sigma_{u^j})\), it suffices to find such a bound for points on the flow lying in \(X\) and \(Y\) separately, and then invoke the \(SO(2)\times SO(2)\)-invariance of \(F_t(\Sigma_{u^j})\).

    \smallskip

    \noindent{\bf Case 1: Close to the origin.} We start by showing that close to the origin portions of \(F_t(\Sigma_{u^j})\) satisfy the full graph equation over a domains independent of \(t\) and \(j\). This relies on a quantitative lower bound on the distance between the evolving surface and the origin. Specifically, the first part of Lemma \ref{lem:hyperboloid_comparison} applied with \(\varepsilon=0\) tells us that if for some \(2/3\leq \beta < 1\), 
    \begin{align}\label{eq:Sigma_u^j_containment}
        \Sigma_{u^j} \subset \{(\bar{x},\bar{y})\in \R^4:(1-\beta)(x_1^2 + x_2^2) \geq y_1^2 + y_2^2\}
    \end{align}
    then the level set flow satisfies
    \begin{align}\label{eq:level_set_flow_containment}
        F_{t}(\Sigma_{u^j}) \subset \{(\bar{x},\bar{y})\in \R^4: (1-\beta)(x_1^2 + x_2^2) \geq y_1^2 + y_2^2 + \beta t\}
    \end{align}
    for all \(t\), so long as the solution stays smooth for this time. Since \(\textnormal{int}(\Sigma_{u^j})\subset \textnormal{int}(\Sigma_{u^*})\), the parameter \(\beta\) can be chosen to depend only on \(\Sigma_{u^*}\), so the lower bound is uniform in \(j\). If necessary, the containment \eqref{eq:Sigma_u^j_containment} can be achieved by further stretching the profile curve constructed in Example \ref{eg:profile_curve} along the \(x_1\)-axis.

    \begin{figure}[h]
        \centering
        \includegraphics[width = 0.8\textwidth]{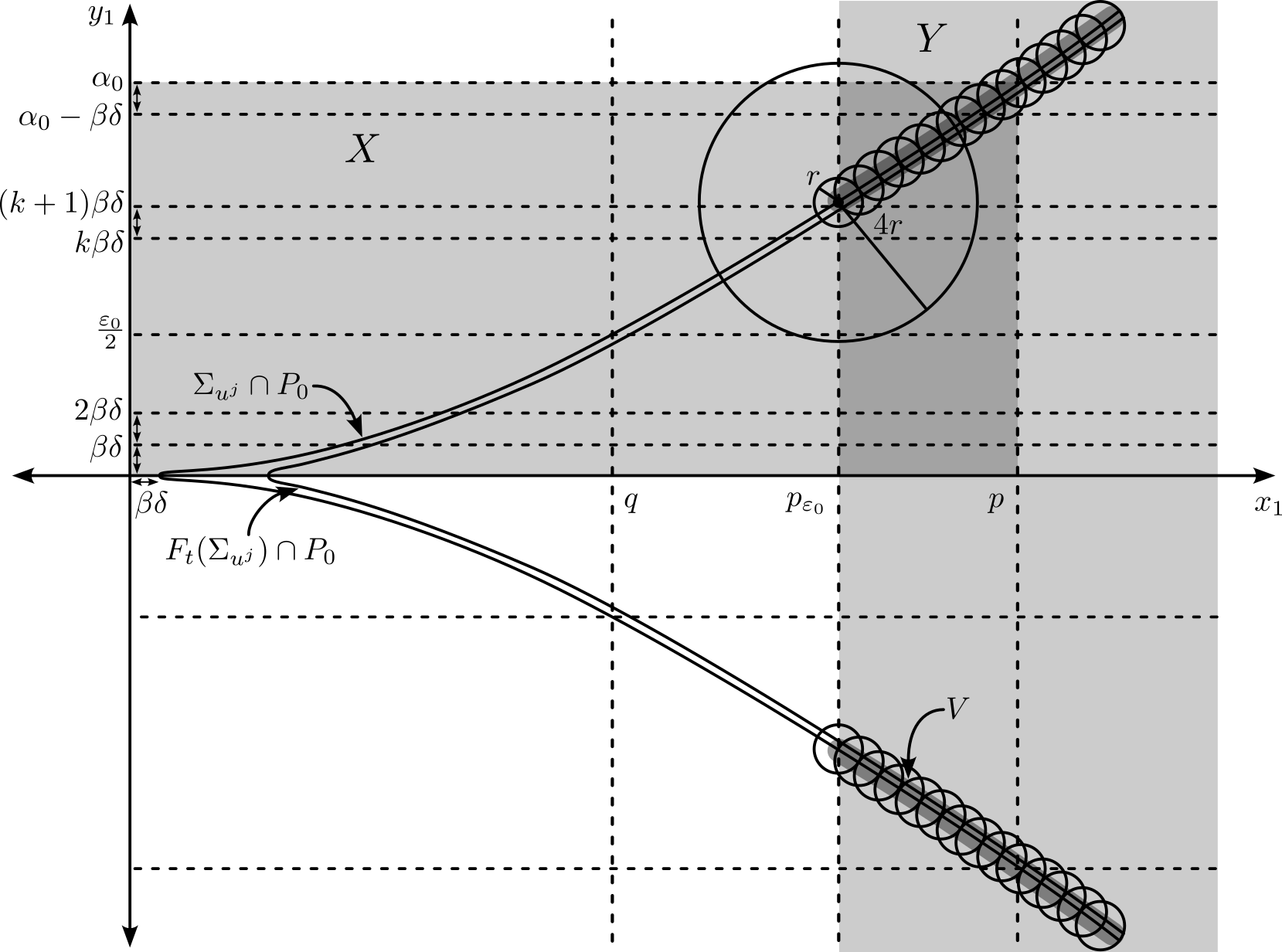}
        \caption{Regions of interest for Evans-Spruck and pseudolocality} 
        \label{fig:covering}
    \end{figure}

    By construction, each \(u^j(\cdot,t)\big|_{[0,p]}\) has a well defined inverse \(v^j:[0,\alpha_0)\to [0,p]\) for  \(t\in [\delta,T)\), where \(p\) is such that \(u^*(p) = \alpha_0\). Moreover, expression \eqref{eq:level_set_flow_containment} implies that \(v^j(y,t)\geq \beta t\) for \(y\in[0,\alpha_0)\) and \(t \in [\delta,T)\), which ensures that 
    \begin{align*}
        |x_2|<\beta\delta \leq \min \{v^j(y,t):y\in [0, \alpha_0), \ t\in [\delta, T), \ j\geq 1\}.
    \end{align*}
    Further restricting \(T\) if necessary so that \(\beta T< \alpha_0\), the above inequality tells us that a portion of \(F_t(\Sigma_{u^j})\) can be written as a graph \(\tilde{v}_0^j\) over the 3-ball \(B_{\beta\delta}(0)\cap \{x_1=0\}\) with \(M_0 \coloneqq \sup |\tilde{v}_0^j| \leq p\). Note in particular that the domain and \(M_0\) are independent of both \(t\) and \(j\). To cover all of \(F_t(\Sigma_{u^j})\cap X\), we can repeat this process to define graphs \(\{\tilde{v}_\ell^j\}_{\ell=1}^N\) on balls of radius \(\beta\delta\) centred at points \((0,0,\ell\beta\delta,0)\), and will also require one final ball centred at \((0,0,\alpha-\beta\delta,0)\) to cover any remaining gap. More formally, set \(N = \lfloor \frac{\alpha_0}{\beta\delta}\rfloor\) and for \(\ell\in \{0,\dots,N\}\) define
    \begin{align*}
        \tilde{v}^j_\ell:\ &B_{\beta\delta}(z_\ell)\cap \{x_1=0\}\times [\delta,T) \to [0,p]\\
        &(x_2,y_1,y_2,t)\mapsto\sqrt{\left(v^j\left(\sqrt{(y_1-\ell\beta\delta)^2 + y_2^2},\ t\right)\right)^2 - x_2^2}
    \end{align*}
    where \(z_\ell = (0,0,\ell\beta\delta,0)\). Then define
    \begin{align*}
        \tilde{v}^j_{N+1}:\ &B_{\beta\delta}(z_{N+1})\cap \{x_1=0\} \times [\delta, T) \to [0,p] \\
        &(x_2,y_1,y_2,t) \mapsto \sqrt{\left(v^j\left(\sqrt{(y_1-(\alpha_0-\beta\delta))^2 + y_2^2},\ t\right)\right)^2 - x_2^2}
    \end{align*}
    where \(z_{N+1} = (0,0,\alpha_0-\beta\delta,0)\). See Figure \ref{fig:covering} for an illustration.

    Since each \(\tilde{v}^j_\ell\) is smooth and solves the full graph equation \eqref{eq:full_graph_equation} in \(B_{\beta\delta}(z_k)\cap\{x_1=0\}\times[\delta,T)\), applying the Evans-Spruck estimate from Proposition \ref{prop:evans_spruck} with \(M_\ell \coloneqq \sup|\tilde{v}^j_k| \leq p\), then upgrading the regularity using Propostion \ref{prop:evans_spruck_schauder}, it follows that for any \(\varepsilon>0\), the first \(k\) derivatives of \(\tilde{v}^j_\ell\) are uniformly bounded on \(B_{\beta\delta-\varepsilon}(z_\ell)\cap\{x_1=0\}\times(\delta+\varepsilon^2,T)\), which implies uniform bounds on the first \(k-2\) derivatives of the second fundamental forms of these graphs, each of which cover a small portion of \(F_t(\Sigma_{u^j})\). Note that the balls over which these graphs are defined cover all of \(\{(\bar{x},\bar{y})\in P_0: x_1 = 0, \ \varepsilon - \beta \delta \leq y_1 \leq \alpha - \varepsilon\}\), and \(\tilde{v}^j_\ell(0,y,0,t) = v^j(y,t)\) for \(y\in B_{\beta\delta-\varepsilon}(z_\ell)\cap P_0\). In particular this implies that the first \(k-2\) derivatives of the second fundamental form of \(F_t(\Sigma_{u^j})\) are uniformly bounded at points in \(X\).

    \smallskip

    \noindent{\bf Case 2: Away from the origin.} We now prove uniform derivative bounds away from the origin by means of a pseudolocality argument. First, let \(q\) be such that \({u}^*(q)=\varepsilon_0/2\). By property \eqref{eq:approximation_property} of the approximation, we know that \(u^j(x) = {u}^*(x)\) for \(x\geq \delta_0 2^{-j}\), where \(\delta_0\) is a fixed constant depending only on \({u}^*\). Thus, up to discarding finitely many \(u^j\), it holds that \(u^j(x) = {u}^*(x)\) for all \(x\geq q\). Since \(\Sigma_{u^*}\) is smooth away from the origin, we can write \(\Sigma_{u^j} \cap \{(\bar{x},\bar{y})\in P_0: x_1\geq q\}\) as finitely many smooth graphs \(\tilde{u}_\ell\) over finitely many domains \(\Omega_\ell\) so that \(\|\tilde{u}_\ell\|_{C^4}(\Omega_\ell)\leq M\) for \(\ell \in \{1,\dots,L\}\). Crucially, these graphs are defined independently of \(j\).

    Now, given \(\beta_0\) as in Proposition \ref{prop:pseudolocality}, choose \(r\) small enough so that \(M\leq \beta_0r^{-3}\). Then it is possible to cover \(\Sigma_{u^j}\cap Y\) with 2-balls \(\{B_r(z_k)\cap P_0\}_{k=1}^N\) such that \(z_1 = (p_{\varepsilon_0},0,u^*(p_{\varepsilon_0}),0)\), \(z_N = (p_{\varepsilon_0},0,-u^*(p_{\varepsilon_0}),0)\) and \(P_0 \cap \bigcup_{k=1}^N B_r(z_k)\) contains a tubular neighbourhood of radius \(r/2\) of \(\Sigma_{u^j}\cap Y\), which we denote by \(V\). Possibly shrinking \(r\), we can ensure that the collection \(\{B_{4r}(z_i) \cap P_0\}_{k=1}^N\) consisting of 2-balls of radius \(4r\) still covers \(\Sigma_{u^j}\cap Y\) without touching two disconnected segments of the hypersurface (this is only a problem near the origin). See Figure \ref{fig:covering} for an illustration. Each \(\Sigma_{u^j}\cap B_{4r}(z_k)\) is then a restriction of the graph \(\tilde{u}_\ell\) to some \(\bar{\Omega}_{k,\ell} \subset \bar{\Omega}_\ell\), which satisfies \(\|\tilde{u}_\ell\big|_{\bar{\Omega}_{k,\ell}}\|_{C^4}\leq M\leq \beta_0/r^4\), for any \(k\in\{1,\dots,N\}\), up to slightly modifying the covering or the domains \(\bar{\Omega}_\ell\). 

    We can now apply Proposition \ref{prop:pseudolocality} to conclude that there exists \(C\) independent of \(j\) such that \(|A^j(w,t)|\leq Cr^{-1}\) for all \(t\leq \min\{r^2\beta_0,T\}\) and \(w\in F_t(\Sigma_{u^j})\cap B_r(z_k)\), for \(T>0\) determined in Case 1. To conclude a uniform second fundamental form bound in the cylindrical region \(Y\), we need to show that there exists a time \(t^*>0\) such that any point on \(F_t(\Sigma_{u^j})\cap Y\) lies in one of the 2-balls \(P_0\cap B_r(z_k)\) and hence in \(B_r(z_k)\). Symbolically, we require
    \begin{align*}
        Y \cap F_t(\Sigma_{u^j}) \subset P_0\cap {\textstyle\bigcup_{k=1}^N} B_r(z_k),
    \end{align*}
    for \(t<t^*\). To prove this, note that for any \(x\in P_0\) we have \(\dist(x,\Sigma_{u^j})=\dist(x,\Sigma_{u^j}\cap P_0)\), by the symmetry of \(\Sigma_{u^j}\), and by the avoidance principle, \(\dist(w,\Sigma_{u^j})\leq \sqrt{6t}\) for any \(w\in P_0\cap F_t(\Sigma_{u^j})\). If \(t^*\) is chosen so that \(\sqrt{6t^*}\leq r/2\), then \(\dist(w,\Sigma_{u^j}\cap P_0) \leq r/2\), so \(w\) belongs to the tubular neighbourhood \(V \subset \bigcup_{k=1}^N B_r(z_k)\) that we constructed earlier. Therefore the result holds for \(t\leq\min\{r^2\beta_0,\ r^2/24,\ T\}\). A uniform bound on higher derivatives of the second fundamental form can then be found using \cite[Proposition 3.22]{Ecker04}.
\end{proof}

We now complete the proof of instant smoothness.

\begin{prop}\label{prop:instant_smoothness}
    There is a \(T>0\) such that \(F_t(\Sigma_{u^*})\) is a smooth hypersurface for \(0<t<T\).
\end{prop}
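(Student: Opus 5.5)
The plan is to pass the uniform estimates of Proposition \ref{prop:uniform_derivative_bounds} to the limit, using the convergence of level set flows of inner approximations from Remark \ref{rem:mean_convex_limit}. Take \(T\) as in Proposition \ref{prop:uniform_derivative_bounds}. The hypersurfaces \(\Sigma_{u^j}\) are smooth and mean convex, and their interiors increase with \(j\). Since \(u^j=u^*\) for \(x>2^{-j}\delta_0\), the union of the interiors is exactly the interior of \(\Sigma_{u^*}\), up to its boundary. Thus the regions \(U^j\) enclosed by \(\Sigma_{u^j}\) form a mean convex inner approximation of the region \(U\) enclosed by \(\Sigma_{u^*}\). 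The region \(U\) is itself mean convex, being the time-\(t_0\) slice of a mean convex level set flow. Remark \ref{rem:mean_convex_limit} then gives \(F_t(\Sigma_{u^j})\to F_t(\Sigma_{u^*})\) in the Hausdorff sense for every \(t\), and non-fattening identifies \(F_t(\Sigma_{u^*})\) with \(\pa F_t(U)\).

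Next, fix \(\delta\in(0,T)\). For \(t\in(\delta,T)\), Proposition \ref{prop:uniform_derivative_bounds} shows that \(F_t(\Sigma_{u^j})\) is smooth, with \(|\nabla^k A|\le C_k(\delta)\) independent of \(j\). Smoothness here also uses that no singularity of the approximating flows occurs in \((\delta,T)\); this is built into Lemma \ref{lem:alpha_invertible} and the choice of \(T\). A bound on \(|A|\) gives a uniform radius \(r_0\sim 1/C_0\) such that, around each point, the surface is a graph over its tangent plane in a ball of radius \(r_0\), with uniformly bounded \(C^k\) norms. Using the mean convex interior-ball/non-collapsing structure, or directly the graphical descriptions in Cases 1 and 2 of the proof of Proposition \ref{prop:uniform_derivative_bounds}, I would also check that at scale \(r_0\) only one sheet passes through each small ball. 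This holds uniformly in \(j\): near the origin the surfaces stay a distance \(\gtrsim\beta t\) away by \eqref{eq:level_set_flow_containment}, so the two sheets near \(P_1\) are separated.

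Then apply Arzelà–Ascoli to the local graphs on a countable cover. A subsequence of \(F_t(\Sigma_{u^j})\) converges locally smoothly, uniformly in \(t\in[\delta',T')\subset(\delta,T)\), to a smooth family of embedded hypersurfaces \(N_t\) solving mean curvature flow. Since Hausdorff limits are unique, \(N_t=F_t(\Sigma_{u^*})\). Hence \(F_t(\Sigma_{u^*})\) is smooth for \(t\in(\delta,T)\), and since \(\delta\) was arbitrary, for all \(0<t<T\). The point \(x_1=0\) poses no issue because the containment \eqref{eq:level_set_flow_containment} persists in the limit, keeping the surface away from \(P_2\).

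The main obstacle is the upgrade from Hausdorff convergence to smooth convergence, in particular ruling out multiplicity. Two sheets of \(F_t(\Sigma_{u^j})\) could converge together, giving a limit that is smooth as a set but counted twice, or a limit touching itself. I would handle this with the embeddedness of the graphical representations. Near the origin, the \(\tilde v^j_\ell\) are single-valued graphs over fixed balls, with values bounded below by \(\beta\delta\). Away from the origin, pseudolocality was applied in balls \(B_{4r}(z_k)\) meeting only one connected segment. So any limit point has a neighbourhood in which the limit is a single smooth graph.
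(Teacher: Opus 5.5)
Your proposal is correct and follows the paper's route. It gets Hausdorff convergence \(F_t(\Sigma_{u^j})\to F_t(\Sigma_{u^*})\) from Remark \ref{rem:mean_convex_limit}, smooth subsequential convergence from the uniform bounds of Proposition \ref{prop:uniform_derivative_bounds}, and then identifies the two limits; your extra discussion of extracting an embedded smooth limit (uniform-scale local graphs, ruling out merging sheets via the graphical descriptions of Cases 1 and 2) fills in a step the paper's short proof leaves implicit, rather than changing the argument.
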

\begin{proof}
    By Lemma \ref{lem:mean_convex_approximation}, \(\Sigma_{u^*}\) admits a mean convex approximation \(\Sigma_{u^j}\). It thus follows from Remark \ref{rem:mean_convex_limit} that \(F_t(\Sigma_{u^j}) \to F_t(\Sigma_{u^*})\) in the Hausdorff sense. On the other hand, Proposition \ref{prop:uniform_derivative_bounds} implies that that \(F_t(\Sigma_{u^j})\) converges uniformly to a smooth hypersurface for \(t\in(0,T)\), therefore by uniqueness of the level set flow the limits must agree and \(F_t(\Sigma_{u^*})\) is smooth. 
\end{proof}

This allows us to complete the proof of Theorem \ref{thm:main_theorem} in \(\R^4\).

\begin{proof}[Proof of \ref{thm:main_theorem}]
    We already established statement (1) in Section \ref{sec:topological_change_example}. Now let \(\Sigma_{u^*}\) be as before. By Proposition \ref{prop:instant_smoothness}, there exists \(T>0\) such that \(F_{t}(\Sigma_{u^*})\) is smooth for \(0<t<T\). Since \( F_{t}(\Sigma_{u^*})\) is smooth, closed and avoids \(P_2\) (by mean convexity of the flow), it is a smooth embedding of \(\Sb^1\times\Sb^2\). This completes the proof of statement (2).
\end{proof}

\begin{rem}[Extension to higher dimensions]\label{rem:higher_dimensions}
    Very few modifications are required to extend the proof of Theorem \ref{thm:main_theorem} from \(SO(2)\times SO(2)\) invariant flow in \(\R^4\) to \(SO(p)\times SO(q)\)-invariant flow in \(\R^{p+q}\). The most substantial modification is required in Section \ref{sec:singularity_formation}, where White's classification result allows for larger number of potential tangent flows that need to be excluded. However, the exact statement of Lemma \ref{lem:disc_containment} still holds in higher dimensions, with the only change being that the \(\ell\) becomes a \((p-1)\)-dimensional plane in \(P_1\), \(H_1^-\) and \(H_1^+\) both become \(p\)-dimensional half-planes in \(P_1\), and \(H_2^-\) becomes a \(q\)-dimensional plane in \(P_2\). This allows the additional tangent flows to be ruled out in bulk using an almost identical argument to the present one. Similar modifications will need to be made when considering the limit flow in Lemma \ref{lem:differentiability}.

    Aside from this, many of the standard results for mean curvature flow that are used in this paper involve constants that depend implicitly on the dimension, but not in a way that fundamentally changes any arguments. In particular, the rates of shrinking spheres and cylinders will need to be adjusted appropriately. The one notable adjustment on this front relates to the admissible range of \(\beta\) for the hyperboloid comparison principle (Lemma \ref{lem:hyperboloid_comparison}) which is not standard in the literature. In the general case, we require \(0\leq \beta \leq 1\) when \(p\leq q-1\), and \(2(p-q+1)/(2p-1)\leq \beta \leq 1\) when \(p>q-1\). 
\end{rem}